\definecolor{BrickRed}{rgb}{0.65,0.08,0}
\numberwithin{equation}{section}
\numberwithin{figure}{section}
\numberwithin{table}{section}
\newtheorem{Lemma}{Lemma}[section]
\newtheorem{Proposition}[Lemma]{Proposition}
\newtheorem{Theorem}[Lemma]{Theorem}
\newtheorem{Corollary}[Lemma]{Corollary}
\theoremstyle{definition}
\newtheorem{Example}[Lemma]{Example}
\newtheorem{Condition}[Lemma]{Condition}
\newtheorem{Construction}[Lemma]{Construction}
\newtheorem{Remark}[Lemma]{Remark}
\newcommand{\myroot}{{\text{\o}}}
\newcommand{\Erdos}{Erd\H{o}s-R\'enyi }
\newcommand{\lan}{\langle}
\newcommand{\ran}{\rangle}
\def\SQ{S^{\sqcup}}
\newcommand{\Emb}{{\mathbb{E}}}
\newcommand{\Nmb}{{\mathbb{N}}}
\newcommand{\Pmb}{{\mathbb{P}}}
\newcommand{\Rmb}{{\mathbb{R}}}
\newcommand{\Umb}{{\mathbb{U}}}
\newcommand{\Vmb}{{\mathbb{V}}}
\newcommand{\Gmc}{{\mathcal{G}}}
\newcommand{\Lmc}{{\mathcal{L}}}
\newcommand{\Ebf}{{\mathbf{E}}}
\newcommand{\zero}{{\boldsymbol{0}}}
\newcommand{\xbd}{{\boldsymbol{x}}}\newcommand{\Xbd}{{\boldsymbol{X}}}
\newcommand{\xibd}{{\boldsymbol{\xi}}}
\newcommand{\Ybd}{{\boldsymbol{Y}}}
\newcommand{\Xbar}{{\bar{X}}}
\newcommand{\xibar}{{\bar{\xi}}}
\newcommand{\Ybar}{{\bar{Y}}}
\newcommand{\Zbar}{{\bar{Z}}}
\newcommand{\Nhat}{{\hat{N}}}
\newcommand{\Atil}{{\tilde{A}}}
\newcommand{\Btil}{{\tilde{B}}}
\newcommand{\Ctil}{{\tilde{C}}}
\newcommand{\ftil}{{\tilde{f}}}
\newcommand{\phitil}{{\tilde{\phi}}}
\newcommand{\Ztil}{{\tilde{Z}}}
\newcommand{\PolishX}{{\mathcal{X}}}
\newcommand{\Polish}{{\mathcal{Z}}}
\newcommand{\polish}{{z}}
\newcommand{\local}{{local-field equations}}
\newcommand{\neighborhood}{{marginal dynamics}}
\renewcommand{\iff}{\Leftrightarrow}
\def\qed{ \hfill $\blacksquare$}  
\newcommand{\N}{\mathbb{N}}
\newcommand{\R}{\mathbb{R}}
\newcommand{\E}{\mathbb{E}}
\def\tree{\mathcal{T}}
\def\mom{\pi}
\def\cmeas{\gamma}
\def\V{\mathbb{V}}
\def\X{\mathcal{X}}
\def\P{\mathcal{P}}
\def\G{\mathcal{G}}
\def\Z{\mathbb{Z}}
\def\PP{\mathbb{P}}
\def\Nspace{\Xi}
\def\Var{\mathrm{Var}}
\def\Cov{\mathrm{Cov}}
\DeclareFontFamily{U}{skulls}{}
\DeclareFontShape{U}{skulls}{m}{n}{ <-> skull }{}
\def\cem{\small{\varpi}}
\newcommand{\be}{\begin{equation}}
\newcommand{\ee}{\end{equation}}
\newcommand{\bes}{\begin{equation*}}
\newcommand{\ees}{\end{equation*}}
\newcommand{\beqn}{\begin{eqnarray}}
\newcommand{\eeqn}{\end{eqnarray}}
\newcommand{\beq}{\begin{eqnarray*}}
\newcommand{\eeq}{\end{eqnarray*}}
\newcommand{\ba}{\begin{aligned}}
\newcommand{\ea}{\end{aligned}}
\numberwithin{equation}{section}
\begin{document}
	
\title[Marginal dynamics of probabilistic cellular automata on trees]{Marginal dynamics of probabilistic cellular automata on trees}

\date{\today}
\subjclass[2020]{
    Primary: 
    60K35, 
    60J05; 
    Secondary:    
    37B15, 
    60J80. 
}
\keywords{
    probabilistic cellular automata, 
    interacting particle systems, 
    locally interacting Markov chains, 
    local-field equations, 
    Markov random field, 
    Galton-Watson trees, 
    random graphs.
} 
\author[Lacker]{Daniel Lacker}
    \address{Columbia University, New York, New York} 
    \thanks{D.\ Lacker acknowledges support from an Alfred P.\ Sloan Fellowship and the NSF CAREER award DMS-2045328.}
\author[Ramanan]{Kavita Ramanan}
    \thanks{K.\ Ramanan was supported in part by NSF-DMS Grants 1713032 and NSF DMS-1954351, ARO Grant W911NF2010133 and the VBFF Faculty Fellowship ONR-N0014-21-1-2887} 
    \address{Division of Applied Mathematics, Brown University, 182 George Street, Providence, RI 02912} 
\author[Wu]{Ruoyu Wu}
    \thanks{R.\ Wu was supported in part by NSF DMS-2308120 and Simons Foundation travel grant MP-TSM-00002346}
    \address{Department of Mathematics, Iowa State University, 411 Morrill Rd, Ames, IA 50011} 
\email{daniel.lacker@columbia.edu, kavita\_ramanan@brown.edu, ruoyu@iastate.edu}

\begin{abstract}
    We study locally interacting processes in discrete time, often called probabilistic cellular automata, indexed by locally finite graphs. For infinite regular trees and certain generalized Galton-Watson trees, we show that the marginal evolution at a single vertex and its neighborhood can be characterized by an autonomous stochastic recursion referred to as the local-field equation. This evolution can be viewed as a nonlinear or measure-dependent chain, but the measure dependence arises from the symmetries of the underlying tree rather than from any mean field interactions. We discuss applications to simulation of marginal dynamics and approximations of empirical measures of interacting chains on several generic classes of large-scale finite graphs that are locally tree-like. In addition to the symmetries of the tree, a key role is played by a second-order Markov random field property, which we establish for general graphs along with some other novel Gibbs measure properties.
\end{abstract}
	 	 	
\maketitle

\tableofcontents

\section{Introduction}

We study discrete-time systems of interacting stochastic processes, indexed by the vertices of a tree, with the state transitions of each  vertex depending only on its nearest neighbors. More precisely, for a connected locally finite graph $G=(V,E)$ with a finite or countable vertex set, we consider the following dynamics:
\begin{equation}
    \label{intro:mainsystem}
    X^G_v(k+1) = F(X^G_v(k),\mu^G_v(k),\xi_v(k+1)), \qquad k \in \N_0, \ \ v \in V, 
\end{equation}
where $\mu^G_v(k)$ is the neighborhood empirical measure given by
\begin{equation*}
    \mu^G_v(k) = \frac{1}{|N_v(G)|}\sum_{u \in N_v(G)}\delta_{X^G_u(k)}, 
\end{equation*}
$N_v(G) := \{u \in V : (u,v) \in E\}$ denotes the neighborhood of $v$, $F$ is a suitable mapping that governs the dynamics, and $\N_0 := \N \cup \{0\}$.  
The noises $(\xi_v(k))_{v \in V,\, k \in \N}$ are independent and identically distributed (i.i.d.), and the process $X = (X_v(k))_{k \in \N_0}$ take values in a common Polish space $\PolishX$ at each time step.  
Processes of the form \eqref{intro:mainsystem} are referred to by many names, including  \emph{synchronous} \emph{locally interacting Markov chains}, or perhaps most commonly \emph{stochastic} or \emph{probabilistic cellular automata}, and have been the subject of significant research in both theoretical and applied domains, but we postpone our discussion of the vast related literature to Section \ref{se:background} below.
In fact, our most general results stated in Section \ref{se:results} will also encompass non-Markovian (history-dependent) dynamics.

The primary goal of the present article is to derive what we call the \emph{\local} associated with \eqref{intro:mainsystem}, when the underlying graph $G$ is 
either an infinite regular tree or a (generalized) Galton-Watson tree.
The \local\ provide an autonomous dynamic characterization of the marginal  law 
of the stochastic process $X$ at a single site $v$ and its neighborhood $N_v(G)$ in a self-contained fashion, without reference to the rest of the graph. They also prescribe an  algorithm for simulating this marginal distribution.   
The  marginal distribution at a vertex also captures 
the limit, as $n \rightarrow \infty,$ of macroscopic observables captured by the global empirical measure 
\begin{equation}
    \label{def-mugn}
    \mu^{G_n} = \frac{1}{|G_n|} \sum_{u \in G_n} \delta_{X_u(k)}, 
\end{equation} 
whenever $\{G_n\}_{n \in \mathbb{N}}$ is a sequence of finite graphs that converge in a suitable local topology to $G$ (see \cite{LacRamWu-convergence,Ramanan2023interacting,GangulyRamanan2022hydrodynamic} for related results and also further discussion in Section \ref{se:finapprox}). Thus, the local-field equations also serve to approximate macroscopic observables of the dynamics on several classes of locally tree-like graphs, including random regular graphs, configuration models and \Erdos random graphs (e.g., see Section 2.2.4 of \cite{LacRamWu-convergence} and Section \ref{se:finapprox}). Additionally, they play a role in understanding rare events or large deviations of such particle systems \cite{RamYas23}. In the following section, we 
illustrate the essential ideas behind the derivation of the local-field equation in a simplified setting.

\subsection{Informal derivation of the \local\ on $\Z$} \label{se:Zlocdyn}

Let us focus on the simple case where the graph $G$ is the $2$-regular tree, that is,  the integer lattice $\Z$. Changing notation slightly, the particle system \eqref{intro:mainsystem} can be written as
\begin{equation}
	\label{intro:Zsystem}
	X_i(k+1) = F(X_i(k),X_{i+1}(k),X_{i-1}(k),\xi_i(k+1)), \quad i \in \Z, 
\end{equation}
where we crucially assume that $F$ is symmetric with respect to the neighboring states in the sense that $F(x,y,z,\xi)=F(x,z,y,\xi)$. It is then clear that  the particle system inherits the symmetries of the underlying graph, so that for each $k \in \N$ the relation $(X_i(k))_{i \in \Z} \stackrel{d}{=} (X_{j+i}(k))_{i \in \Z}$ and $(X_i(k))_{i \in \Z} \stackrel{d}{=} (X_{j-i}(k))_{i \in \Z}$ is satisfied for all $j \in \Z$, as long as this property is true at the initial time $k=0$. In fact, the same symmetries are valid for the trajectories $X_i[k] := (X_i(0),X_i(1),\ldots,X_i(k))$, not just the time-$k$ marginals.

In addition to symmetry, the other key ingredient in our \local\ is a conditional independence structure. We prove in Section \ref{sec:MC_graph} that the particle system \eqref{intro:mainsystem} forms a \emph{second-order Markov random field} over $G$, which in the present case of $G=\Z$ means that $(X_j[k])_{j < i}$ and $(X_j[k])_{j > i+1}$ are conditionally independent given $(X_i[k],X_{i+1}[k])$, for each $i \in \Z$ and $k \in \N$, again as long as we assume this conditional independence to be true at time $k=0$. This conditional independence property, interesting in its own right, is generic for dynamics of the form \eqref{intro:mainsystem}, in the sense that it is true for any underlying graph, not just trees. Moreover, it cannot be strengthened to a first-order Markov property, nor does it hold for the time-marginals; it is valid only at second order, and only at the level of trajectories. This is discussed in detail in Section \ref{se:counterexamples}.

Combining the symmetries and conditional independence lets us understand the \neighborhood\ as follows. 
Let $\nu[k]$ denote the joint law of $(X_1[k],X_0[k],X_{-1}[k])$, for $k \in \N_0$.
From $\nu[k]$ we can construct the (regular) conditional law of $X_1[k]$ given $(X_0[k],X_{-1}[k])$, denoted $\cmeas[k](\cdot \,|\, x_0,x_{-1}):= \Lmc(X_1[k] \,|\, X_0[k]=x_0,\,X_{-1}[k]=x_{-1})$.
Then the joint law of $(X_i[k])_{i \in \Z}$ is precisely
\begin{equation*}
	\nu[k](dx_1,dx_0,dx_{-1})\prod_{i=1}^\infty \gamma[k](dx_{i+1}\,|\,x_i,x_{i-1})\,\gamma[k](dx_{-i-1}\,|\,x_{-i},x_{-i+1}).
\end{equation*}
(Fixing $k$ and interpreting $i \in \Z$ as a time index, this is essentially saying that $(X_i[k])_{i \in \Z}$ is a stationary second-order Markov chain.)

Once we understand that the entire joint law of $(X_i[k])_{i \in \Z}$ is characterized by the marginal law $\nu[k]$ of $(X_i[k])_{i=-1,0,1}$, this strongly suggests that the dynamics of the whole infinite system can be obtained from the dynamics of sites $(-1,0,1)$. Indeed, suppose we know $(X_i[k])_{i=-1,0,1}$ and $\nu[k]$ at some time $k$. Construct the conditional measure $\cmeas[k]$ as above. We can then generate two samples $Z_2[k]$ and $Z_{-2}[k]$, which we call \emph{phantom particles}, which are conditionally independent given $(X_i[k])_{i=-1,0,1}$ with conditional law given by
\begin{align*}
	\Lmc\big(Z_2[k] &\in dz_2, \, Z_{-2}[k] \in dz_{-2} \,|\, (X_i[k])_{i=-1,0,1}\big) \\
	&= \cmeas[k]\big(dz_{-2} \,|\, X_{-1}[k],X_0[k]\big) \ \cmeas[k]\big(dz_{2} \,|\, X_{1}[k],X_0[k]\big).
\end{align*}
We then update $(X_i)_{i=-1,0,1}$ by setting
\begin{align*}
	X_1(k+1) &= F(X_1(k),Z_2(k),X_0(k),\xi_1(k+1)), \\
	X_0(k+1) &= F(X_0(k),X_1(k),X_{-1}(k),\xi_0(k+1)), \\
	X_{-1}(k+1) &= F(X_{-1}(k),X_0(k),Z_{-2}(k),\xi_{-1}(k+1)).
\end{align*}
This is consistent in distribution with the dynamics \eqref{intro:Zsystem}, because $(Z_{-2},X_{-1},X_0,X_1,Z_2)[k]$ and $(X_{-2},X_{-1},X_0,X_1,X_2)[k]$ have the same distribution, by construction (and because the $\xi_i(k+1)$'s are independent).

To summarize the \emph{\local} governing $(X_i)_{i=-1,0,1}$: At each time step $k$, we sample the two relevant missing particles at vertices $i=\pm 2$ using the conditional distribution $\cmeas[k]$ constructed from the histories of the middle three particles, use these phantom particles to update the three middle particles, discard the phantom particles, and repeat.

The interesting feature of these dynamics (which motivates the name) is that they are governed solely by the middle three particles, with no reference to particles $(X_i)_{|i| \ge 2}$. 
This is the key advantage of the \local, now three-dimensional compared to the original infinite-dimensional system \eqref{intro:Zsystem}.  The price one has to pay for this drastic reduction in dimension  is that the update procedure from time $k$ to $k+1$ depends not only on the realizations $(X_i(k))_{i=-1,0,1}$ but also on the joint law of the entire history up to time $k$. This feature renders the \local\ non-Markovian.  
Nonetheless, we demonstrate in Appendix \ref{sec:char_regular_tree} how the \local\ can be tractable in some cases. 
Moreover, for other classes of processes relevant in applications including SIR-type models, the \local\ in fact reduce to simpler tractable  Markov recursions (see \cite{Wortsman18} for numerical evidence and \cite{CocomelloRamanan2023exact,Cocomello-thesis} for results for continuous time jump processes).

The \local\ described above are a (non-standard) example of a \emph{nonlinear Markov chains} (cf.\ \cite{del2004feynman}). A nonlinear Markov chain $M$ is described by dynamics of the form
\begin{equation}
	\label{intro:nonlinMC}
	M(k+1)=F(M(k),\Lmc(M(k)),\xi(k+1)), 
\end{equation}
where $\Lmc(M(k))$ denotes the law of $M(k)$. Such dynamics arise as  $n\to\infty$ (mean field) limits  of interacting Markov chains of the form \eqref{intro:mainsystem}, where now the graph $G = G_n$ is the complete graph on $n$ vertices, $F$ is a suitable mapping that is continuous with respect to weak convergence in the second variable and $(X^{G_n}_v(0))_{v \in G_n}$ are i.i.d. In this case both $\mu^{G_n}$ and $X^{G_n}_{v_n}$ converge in law to a nonlinear Markov chain $M$ (where the vertex $v_n$ is an arbitrary vertex in $G_n$).  
In fact, the \local\ described above can be written in the same form (albeit history-dependent) as \eqref{intro:nonlinMC} with $M=(X_{-1},X_0,X_1)$ representing the trajectories and with $F$ and $\xi$ redefined appropriately. But the resulting function $F$ will rarely be continuous, because its dependence on the measure involves several conditional distributions  associated with the law $\Lmc(M(k))$.

\subsection{Additional features of the \local\ on trees}

We will save the details for the body of the paper, but we highlight here some of the key points in deriving the \local\ on more general trees. It is not too difficult to adapt the discussion of Section \ref{se:Zlocdyn} to $d$-regular trees for $d > 2$. Again, the symmetries of the underlying tree (now richer than the simple shifts and reflections of $\Z$) and a conditional independence structure play a central role. On the $d$-regular tree $G$, we show that if one removes two adjacent vertices, then the particles indexed by the two remaining (disjoint) subtrees are conditionally independent given the particles indexed by these two removed vertices. (To be precise, this is true on the trajectory-level up to any time $k$, as long as it is true at time $k=0$; see Theorem \ref{thm:cond_independence}.) The \local\ now describe the evolution of a single particle and its neighbors, resulting in a $(d+1)$-dimensional process. At each time step, each of the $d$ neighbors is updated by sampling $d-1$ phantom particles, for a total of $d(d-1)$ phantom particles. See Section \ref{se:regresults} for precise statements.

The case of a random tree $G$, while following similar principles, is much more subtle; full details  are given in Section \ref{se:GWresults}. We focus on Galton-Watson trees, or the generalization thereof denoted by $\mathrm{GW}(\rho_{\myroot},\rho)$ which is governed by two offspring distributions; the number of offspring of the root is distributed according to $\rho_{\myroot}$, while those 
of subsequent generations independently have distribution  $\rho$. The \local\ now require tracking the joint law of the particle histories and tree structure of the first two generations, with a random number of phantom particles appearing at each time step.
Interestingly, if $\rho_{\myroot}$ and $\rho$ relate in precisely the right manner to render the random tree $\mathrm{GW}(\rho_{\myroot},\rho)$ \emph{unimodular}, then \local\ consisting of one instead of two generations suffices, at the cost of a slightly more complicated evolution; see Section \ref{se:UGWresults} for details. 
It is worth emphasizing that even when 
the tree is unimodular, the slightly simpler form of the  local-field equations on the larger $2$-neighborhood of the root described in Section \ref{se:GWresults} may turn out to be more useful. For example, it was used in \cite{Cocomello-thesis} to characterize the hydrodynamic limits of  certain hybrid interacting systems.

The family of $\mathrm{GW}(\rho_{\myroot},\rho)$ trees possesses convenient spatial homogeneity and conditional independence structures, and this lets us adapt the ideas described above for $\Z$ and for $d$-regular trees. It is conceptually clear, however, that a form of \local\ should be available on any suitably homogeneous random tree with some conditional independence to work with. A simple example would be a (deterministic) periodic tree.
We do not pursue this generalization here.

\subsection{Conditional independence and Gibbs measures} \label{se:Markovfield}

As mentioned above, a crucial second-order Markov random field property, developed in detail in Section \ref{sec:MC_graph}, underlies our derivation of the \local. This property, valid for the particle system \eqref{intro:mainsystem} set on a general graph $G=(V,E)$, is interesting in its own right and appears to be new in the literature on probabilistic cellular automata. We study two other related properties in Section \ref{se:consist-Gibbs}. 
First, we prove a Gibbs-type uniqueness property (Theorem \ref{thm:Gibbs_uniqueness}), which states that (under suitable conditions) the joint law of $(X^G_v[k])_{v \in V}$ is uniquely determined by the joint law of the initial states $(X^G_v(0))_{v \in V}$ and the \emph{specifications}, that is, the family of conditional laws of $(X^G_v[k])_{v \in A}$ given $(X^G_v[k])_{v \in {\partial A}}$, ranging over finite sets $A \subset V$.
The second is a consistency property (Proposition \ref{thm:cond_consistency}), which states that for a finite set of vertices $A \subset V$ the conditional law of $(X^G_v[k])_{v \in A}$ given $(X^G_v[k])_{v \in \partial^2 A}$ does not depend on the structure of the graph $G$ outside of $A \cup \partial^2A$.
Here we use the following notation for the first and second boundaries:
\begin{equation}
	\label{def:boundaries}
	\partial A = \{u \in V \setminus A : (u,v) \in E \text{ for some }v \in A\}, \qquad
	\partial^2 A = \partial A \cup \partial (A \cup \partial A).
\end{equation}

\subsection{Background and motivation} \label{se:background}

The  class of particle systems of the form \eqref{intro:mainsystem} might well be named \emph{symmetric probabilistic} (or \emph{probabilistic}) \emph{cellular automata}.
The key features are that the state $X_v(k)$ at each site $v \in V$ is updated \emph{simultaneously} from time $k$ to $k+1$, and these updates are done conditionally independently given the time-$k$ value of the configuration $(X_v(k))_{v \in V}$, with the update at a site $v$ depending in a symmetric fashion on only the neighboring sites $N_v(G)$.

Probabilistic cellular automata (PCA) have seen substantial application in a wide range of physical and social sciences. The recent book \cite{LouisNardi} contains a thorough account of recent developments and a wonderful introductory chapter surveying the history and applications. 
Modern probabilists are perhaps better acquainted with PCA's cousins, the asynchronous  models of interacting particle systems and Glauber dynamics in which sites are updated one at a time in a Poissonian fashion, such as those treated in \cite{Liggett}.
In statistical mechanics, PCA provide an intriguing alternative for modeling non-equilibrium phenomena like metastability  \cite{grinstein1985statistical,lebowitz1990statistical,bigelis1999critical}.
PCA are also a natural and popular framework for modeling the evolution of various processes on or through a network, including but not limited to epidemics such as SIS/SIR-type models \cite{grassberger1983critical,boccara1993critical}, spatial patterns and dynamics in ecological systems \cite{DurrettLevin}, economic models of price fluctuations and social interactions \cite{follmer1994stock,horst2005financial,cont2010social,FraimanLinOlvera2024opinion}, and default cascades in financial networks \cite{hurd2016contagion}. See also \cite[Section 1]{LouisNardi} for additional discussion of applications including computer science, neuroscience, and other natural sciences.

Our primary interest is in large-scale models of the form \eqref{intro:mainsystem}. 
In a companion paper \cite{LacRamWu-convergence} (see also \cite{OliReiSto19} and \cite{GangulyRamanan2022hydrodynamic} 
for related results for diffusion models and continuous time Markov chains, respectively), we showed that the particle system behaves well with respect to \emph{local convergence} of graphs. Many popular sparse random graph models converge locally to trees of the kind considered in this paper; for example, the \Erdos graph with constant edge density and the configuration model are known converge to $\mathrm{GW}(\rho_{\myroot},\rho)$ trees. 
Our \local\ thus characterize the behavior of a typical neighborhood in these sparse large-scale models, as explained in detail in Section \ref{se:finapprox} below, addressing a question raised in \cite{Ram22} and providing a kind of surrogate for the mean field approximation, which is valid only when $G$ is a complete or sufficiently dense graph, whereas many real-world networks are sparse.

Much of the prior literature on PCA is focused on stationarity, ergodicity, and Gibbs measure properties \cite{dawson1975synchronous,goldstein1989pca,lebowitz1990statistical,follmer2001convergence,dai2002stationary,louis2004ergodicity,dai2012sampling,BLMWY2024probabilistic}, which are remarkably delicate and present a host of challenges quite distinct from asynchronous continuous time models. Because of these challenges, much of the prior literature is limited to finite or even binary state spaces $\PolishX$. 
On the other hand, the idea of \local\ we develop in this paper, as well as the Markov random field properties discussed in Section \ref{se:Markovfield}, are robust enough to apply to models with general (Polish) state space.

While we do not in this paper explore any problems of statistical mechanics, large deviations, ergodicity, etc., we are optimistic that this new perspective of \local\ will prove useful in a variety of applications. Indeed, large deviations principles for the discrete-time systems studied here have been established in \cite{RamYas23}, and analogous \local\ obtained for diffusions in \cite{LacRamWu-diffusion} have been shown to be useful for the study of stationary distributions and long-time behavior in \cite{HuRam24,HuRam25,LacZha23}. 
For now, we limit our discussion of applications to Section \ref{se:finapprox} which, as mentioned above, explains how our \local\ characterize the limiting behavior of large finite-graph systems of the form \eqref{intro:mainsystem}.

\subsection{Organization of the paper}

In Section \ref{se:notation} we fix some terminology and notation to be used throughout the paper. 
Section \ref{se:results} states the main results on \local\ in full detail, including an application in Section \ref{se:finapprox} to large-scale limits of models on finite graphs.
Then Section \ref{sec:MC_graph} develops the key conditional independence structure for systems governed by general deterministic graphs. 
This is used to give a proof of the \local\ for (deterministic) regular trees, and the rest of Section \ref{sec:MC_graph} describes some counterexamples pertaining to other natural conditional independence structures. 
Finally Section \ref{sec:GW} gives the full proofs of the \local\ for Galton-Watson trees.

\section{Statements of main results} \label{se:results}

This section contains the main results on the \local, with all proofs deferred to later sections. 
We begin with regular trees in Section \ref{se:regresults} before moving to Galton-Watson trees in Section \ref{se:GWresults} and finally unimodular Galton-Watson trees in Section \ref{se:UGWresults}. 
Lastly, Section \ref{se:finapprox} describes applications to finite particle systems. 
First, we fix some basic notation in Section \ref{se:notation}.

\subsection{Common notation and terminology} \label{se:notation}

Throughout the paper, the main processes $X_v(k)$ will take values in a Polish space $\PolishX$, and the noise processes $\xi_v(k)$ will take values in another Polish space $\Nspace$. For any Polish space $\Polish$, we write $\P(\Polish)$ for the set of Borel probability measures on $\Polish$, endowed always with the topology of weak convergence. Denote by $\Lmc(Z)$ the law of a random variable $Z$. For two random variables $Z,Y$ taking values in Polish spaces, we write $\Lmc(Z|Y)$ or $\Lmc(Z|Y=y)$ for a version of the regular conditional law of $Z$ given $Y$. We write $Z \stackrel{d}{=} Y$ to mean that the random variables $Z$ and $Y$ have the same distribution.
We write $Z \perp Y$ to mean that $Z$ and $Y$ are independent, and similarly,
\[
	Z \perp Y \,|\, W
\]
means that $Z$ and $Y$ are conditionally independent given $W$, where $W$ is an additional random variable.
Lastly, we let $\N_0 := \N \cup \{0\}$ and write $|A|$ for the cardinality of a set $A$.

For an index set $I$ and a Polish space $\Polish$, we write $\Polish^I$ for the configuration space. We make use of a standard notation for configurations on subsets of vertices: For $\polish=(\polish_i)_{i \in I} \in \Polish^I$ and $J \subset I$, we write $\polish_J$ for the element $\polish_J:=(\polish_i)_{i \in J}$ of $\Polish^J$.

\subsubsection{Space of unordered terminating sequences} \label{se:symmetricsequencespace}
When working with random trees, the update rule of our processes will depend on an unspecified number of neighbors.
Hence, we need a formalism for specifying a single ``updating function" which takes as input finite sequences of arbitrary length from some space and are insensitive to the order of these elements. To this end, for a Polish space $\Polish$, 
we define in this paragraph a space $\SQ(\Polish)$ of finite unordered $\Polish$-valued sequences of arbitrary length (possibly zero). First, for $k \in \N$ we define the symmetric power (or unordered Cartesian product) $S^k(\Polish)$ as the quotient of $\Polish^k$ by the natural action of the symmetric group on $k$ letters. For convenience, let $S^0(\Polish) = \{\emptyset\}$. Define $\SQ(\Polish)$ as the disjoint union,
\[
\SQ(\Polish) = \bigsqcup_{k=0}^\infty S^k(\Polish).
\] 
A typical element of $\SQ(\Polish)$ will be denoted $(\polish_v)_{v \in A}$, for a finite (possibly empty) set $A$; if the set is empty, then by convention $(\polish_v)_{v \in A} = \emptyset \in S^0(\Polish)$. 
It must be stressed that the vector $(\polish_v)_{v \in A}$ has no order. 
Endow $\SQ(\Polish)$ with the disjoint union topology, i.e., the finest topology on $\SQ(\Polish)$ for which the injection $S^k(\Polish) \hookrightarrow \SQ(\Polish)$ is continuous for each $k \in \N$. 
This makes $\SQ(\Polish)$ a Polish space.

We make use of two operations on the space $\SQ(\Polish)$.
First, we write $|\polish|$ to denote the length of a vector $\polish \in \SQ(\Polish)$. That is, for $\polish=(\polish_v)_{v \in A}$, then $|z|=|A|$ is just the cardinality of the index set.
Second, we will sometimes use the symbol $\lan \cdot \ran$ to emphasize when we are working with an unordered vector; for a vector $\polish=(\polish_v)_{v \in A}$ which may be viewed as an element of the (ordered) Cartesian product $\Polish^A$, we write $\lan z \ran$ for the corresponding (unordered) element of $\SQ(\Polish)$, to avoid any ambiguity.

\subsection{The local-field equation on a regular tree} \label{se:regresults}

In this section we work with the infinite $\kappa$-regular tree $G=(V,E)$. 
For ease of notation in the regular tree setup, we denote the root $\myroot$ by $0$.
Write $N_v=N_v(G)$ for the neighborhood of a vertex $v$. We study the following process:
\begin{equation}
	\label{eq:regular_tree}
	X_v(k+1) = F^k(X_v[k],X_{N_v}[k],\xi_v(k+1)), \qquad k \in \N_0, \ \ v \in V. 
\end{equation}
We write $X_v[k]:=(X_v(i))_{i=0}^k$ for $k \in \N_0$ and $X_v:=(X_v(i))_{i=0}^\infty$ for the finite and infinite trajectories, which are random variables with values in $\PolishX^{k+1}$ and $\PolishX^\infty$, respectively.
We make the following assumptions:

\begin{Condition}
	\label{cond:regular_tree}
	\phantomsection
	{\ }
	\begin{enumerate}
	\item
		The graph $G=(V,E)$ is an infinite $\kappa$-regular tree, for $\kappa \ge 2$, meaning that it is a tree with a countably infinite vertex set such that every vertex $v$ has precisely $\kappa = |N_v|$ neighbors.
	\item
		$F^k \colon \PolishX^{k+1} \times (\PolishX^{k+1})^\kappa \times \Nspace \to \PolishX$ is measurable, and it is symmetric with respect to its $(\PolishX^{k+1})^\kappa$ argument. That is, $F^k(x,(y_i)_{i=1}^{\kappa},\xi) = F^k(x,(y_{\pi(i)})_{i=1}^{\kappa},\xi)$ for any permutation $\pi$ of $\{1,\ldots,\kappa\}$ and any $x,y_1,\ldots,y_\kappa \in \PolishX^{k+1}$ and $\xi \in \Nspace$.
	\item 
		The $\Nspace$-valued random variables $\{\xi_v(k) : v \in V, k \in \N\}$ are i.i.d.\ and independent of $(X_v(0))_{v \in V}$.
	\item 
		The collection $(X_v(0))_{v \in V}$ satisfies the following: 
		\begin{enumerate}
		\item 
			For any edge $(u,v)$ in $G$, if $T_1,T_2 \subset V$ denote the disjoint subtrees obtained by removing $u$ and $v$, then $X_{T_1}(0) \perp X_{T_2}(0) \,|\, X_{\{u,v\}}(0)$.
		\item 
			$(X_v(0))_{v \in V}$ is invariant under automorphisms of $G$. That is, if $\varphi : V \to V$ is a bijection such that $(u,v) \in E \iff (\varphi(u),\varphi(v)) \in E$, then $(X_{\varphi(v)}(0))_{v \in V} \stackrel{d}{=} (X_v(0))_{v \in V}$.
		\end{enumerate}
	\end{enumerate}
\end{Condition}

Unlike in \eqref{intro:mainsystem} we choose here to write $F^k$ as depending on the vector of neighbors $X_{N_v}[k]$ instead of the empirical measure. Even though we require  $F^k$ to depend symmetrically on $X_{N_v}[k]$, the vector formulation is more general as it allows for dependence on the numbers of neighbors in different states, and  is also more convenient for expressing continuity conditions that capture models that arise in practice.
Note that, of course, Condition \ref{cond:regular_tree}(4) holds if $(X_v(0))_{v \in V}$ are i.i.d.
The following construction describes the \local\ for the $\kappa$-regular tree:

\begin{Construction}[Local-field equation on a regular tree] 
	\label{constr:regulartree_local}
	Suppose Condition \ref{cond:regular_tree} holds.	
	Construct an $\PolishX^{\kappa+1}$-valued process $\Xbar$ as follows:
	\begin{enumerate}[(i)]
	\item 
		Initialize by setting $(\Xbar_v(0))_{v=0}^{\kappa} \stackrel{d}{=} (X_v(0))_{v =0}^{\kappa}$.
	\item 
		Proceeding recursively, for $k \ge 0$ we generate $\Xbar(k+1)$ from $\Xbar(k)$ as follows:
		\begin{itemize}
		\item 
			Define a kernel $\bar{\cmeas}[k]$ by
			\begin{equation*}
				\bar{\cmeas}[k](\cdot \,|\, x_0,x_1) := \Lmc\big((\Xbar_v[k])_{v=2}^{\kappa} \,|\, \Xbar_0[k] = x_0, \, \Xbar_1[k]=x_1 \big).
			\end{equation*}
			This is a $\Lmc(\Xbar_1[k],\Xbar_0[k])$-a.e.\ well-defined random measure on $(\PolishX^{k+1})^{\kappa-1}$.
		\item 
			Generate i.i.d.\ random variables $(\xibar_v(k+1))_{v=0}^{\kappa}$ with the same law as $\xi_v(k)$.
		\item 
			For each $v = 1,\dotsc,\kappa$, given $(\Xbar_u[k])_{u=0}^{\kappa}$, generate conditionally independent random variables $(Z^k_{vj})_{j=1}^{\kappa-1}$ with
			\begin{equation*}
				(\Zbar^k_{vj})_{j=1}^{\kappa-1} \sim \bar{\cmeas}[k](\cdot\, | \Xbar_v[k], \Xbar_0[k]).
			\end{equation*}
			In other words, for Borel sets $B_1,\ldots,B_\kappa \subset (\PolishX^{k+1})^{\kappa-1}$, we have
			\begin{equation}
				\PP\big((\Zbar^k_{vj})_{j=1}^{\kappa-1} \in B_v, \ v=1,\ldots,\kappa \,\big|\, (\Xbar_u[k])_{u=0}^{\kappa}\big) = \prod_{v=1}^{\kappa}\bar{\cmeas}[k](B_v \,|\, \Xbar_v[k], \Xbar_0[k]). \label{def:update-reg}
			\end{equation}
			Define also $\Zbar^k_{v0} := \Xbar_0[k]$.
		\item 
			Finally, set
			\begin{align*}
				\Xbar_0(k+1) & = F^k\left(\Xbar_0[k],(\Xbar_v[k])_{v=1}^{\kappa},\xibar_0(k+1)\right) \\
				\Xbar_v(k+1) & = F^k\left(\Xbar_v[k],(\Zbar^k_{vj})_{j=0}^{\kappa-1},\xibar_v(k+1)\right), \quad v=1,\dotsc,\kappa.
			\end{align*}
		\end{itemize}
	\end{enumerate}
\end{Construction}

The first main result is the following.  

\begin{Theorem}[Local-field equations characterize marginals on regular trees] 
	\label{thm:localequations-reg}	
	Suppose Condition \ref{cond:regular_tree} holds, and let $X$ be as in \eqref{eq:regular_tree}. Let $\Xbar$ be as in Construction \ref{constr:regulartree_local}. Then $(X_v)_{v =0}^{\kappa} \stackrel{d}{=} (\Xbar_v)_{v=0}^{\kappa}$.
\end{Theorem}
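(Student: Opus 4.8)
The plan is to prove, by induction on $k$, the finite-time identity $(X_v[k])_{v=0}^\kappa \stackrel{d}{=} (\Xbar_v[k])_{v=0}^\kappa$; since the law of a sequence-valued random vector is determined by its finite-time marginals, this yields $(X_v)_{v=0}^\kappa \stackrel{d}{=} (\Xbar_v)_{v=0}^\kappa$. The base case $k=0$ is Construction~\ref{constr:regulartree_local}(i). Before the induction I would record two structural facts about the true system $X$ that hold at \emph{every} time $k$. First, automorphism invariance: because $F^k$ is symmetric in its neighbor argument, the noises are i.i.d., and the initial law is automorphism-invariant (Condition~\ref{cond:regular_tree}(4b)), the field $(X_v[k])_{v\in V}$ is automorphism-invariant for every $k$, since the recursion commutes with relabeling (any $\varphi$ preserves adjacency and $F^k$ ignores the order of neighbors). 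Second, the conditional independence of Theorem~\ref{thm:cond_independence}: for every edge $(u,w)$, writing $T_1\ni u$, $T_2 \ni w$ for the two subtrees obtained by deleting that edge, one has $X_{T_1}[k] \perp X_{T_2}[k] \mid X_{\{u,w\}}[k]$.

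The heart of the argument is a static structural lemma on the true tree: \emph{conditionally on the neighborhood $(X_u[k])_{u=0}^\kappa$, the unordered grandchild families $\langle X_{C_v}[k]\rangle$, $v=1,\dots,\kappa$, where $C_v := N_v\setminus\{0\}$ is the set of $\kappa-1$ children of the neighbor $v$, are mutually independent, and each has conditional law $\cmeas[k](\cdot \mid X_v[k], X_0[k])$}, where $\cmeas[k](\cdot \mid x_0,x_1) := \Lmc((X_w[k])_{w=2}^\kappa \mid X_0[k]=x_0,\,X_1[k]=x_1)$ is the true-tree analogue of $\bar\cmeas[k]$. I would obtain the single-branch law from automorphism invariance: the automorphism $\varphi_v$ swapping the two halves of the tree across the edge $(0,v)$ sends $0\mapsto v$, $v\mapsto 0$ and the neighbors $\{1,\dots,\kappa\}\setminus\{v\}$ onto $C_v$; this swap of root and neighbor roles is exactly what aligns the argument order with that in Construction~\ref{constr:regulartree_local}, and after exchangeability of the root's neighbors it yields $\Lmc(\langle X_{C_v}[k]\rangle \mid X_v[k],X_0[k]) = \cmeas[k](\cdot\mid X_v[k],X_0[k])$. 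For the cross-branch independence, applying Theorem~\ref{thm:cond_independence} to the edge $(0,v)$ shows that, conditionally on $(X_0[k],X_v[k])$, the branch interior $X_{T_v\setminus\{v\}}[k]$ is independent of $\big((X_w[k])_{w\ne v},\,(X_{T_w\setminus\{w\}}[k])_{w\ne v}\big)$; feeding these $\kappa$ relations into a routine telescoping of conditional expectations factorizes $\Lmc\big((X_{T_v\setminus\{v\}}[k])_v \mid (X_u[k])_{u=0}^\kappa\big)$ into the product of the single-branch laws, and restricting to the sub-families $C_v\subset T_v$ gives the lemma.

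With the lemma in hand the inductive step reduces to matching two identical conditional recipes. On the true side, since the time-$(k+1)$ noises are i.i.d.\ and independent of all time-$\le k$ data, the conditional law of $(X_u(k+1))_{u=0}^\kappa$ given $(X_u[k])_{u=0}^\kappa$ is produced by sampling independent grandchild families from $\cmeas[k](\cdot\mid X_v[k],X_0[k])$ (the lemma), drawing fresh i.i.d.\ noise, and applying $X_0(k+1)=F^k(X_0[k],\langle(X_v[k])_{v=1}^\kappa\rangle,\cdot)$ and $X_v(k+1)=F^k(X_v[k],\langle X_0[k]\rangle\cup\langle X_{C_v}[k]\rangle,\cdot)$. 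This is verbatim the update of Construction~\ref{constr:regulartree_local}(ii), with the grandchildren playing the role of the phantom particles $(\Zbar^k_{vj})_j$, provided the two kernels agree. The induction hypothesis gives $(X_u[k])_{u=0}^\kappa \stackrel{d}{=} (\Xbar_u[k])_{u=0}^\kappa$; since $\cmeas[k]$ and $\bar\cmeas[k]$ are regular conditional laws of the same functional of these equal joint laws, they coincide a.e. Applying the common kernel to the two equal neighborhood laws yields $(X_u[k+1])_{u=0}^\kappa \stackrel{d}{=} (\Xbar_u[k+1])_{u=0}^\kappa$, closing the induction.

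I expect the main obstacle to be the static structural lemma, and within it the cross-branch conditional independence: Theorem~\ref{thm:cond_independence} supplies only a two-vertex (second-order) Markov property per edge, which — as emphasized in the introduction — cannot be reduced to conditioning on the single root value, so the mutual independence of all $\kappa$ branches must be assembled from the $\kappa$ pairwise statements rather than read off a single star-shaped factorization. A secondary, easily handled point is the a.e.\ identification $\cmeas[k]=\bar\cmeas[k]$ together with the ordered-versus-unordered bookkeeping of the grandchild families, which is harmless because $F^k$ is symmetric in its neighbor argument so only the unordered families enter the update.
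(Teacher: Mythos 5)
Your proposal is correct and follows essentially the same route as the paper's proof: induction on $k$, with the key step being exactly the factorization you call the static structural lemma (the paper's display \eqref{def:update-reg-2}), obtained by combining propagated automorphism invariance (for the single-branch conditional law $\cmeas[k]$) with Theorem \ref{thm:cond_independence} applied branch-by-branch (for the cross-branch conditional independence), followed by the identification $\cmeas[k]=\bar\cmeas[k]$ from the induction hypothesis and a tower-property computation. The only cosmetic difference is that you use the involution swapping the two sides of the edge $(0,v)$ plus neighbor exchangeability, where the paper directly invokes the automorphism sending $(v,0,(vj)_j)$ to $(0,1,(1+j)_j)$.
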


The proof is deferred to Section \ref{se:regtreeprf}. 
We emphasize again that, as was explained in Section \ref{se:Zlocdyn} for the $\kappa=2$-regular tree, the point of Theorem \ref{thm:localequations-reg} is to show that the $(\kappa+1)$-dimensional process given by Construction \ref{constr:regulartree_local} agrees in law with any neighborhood of particles in the infinite-dimensional process given by \eqref{eq:regular_tree}. 

\begin{Remark}
    The \local\ in Construction \ref{constr:regulartree_local} and Theorem \ref{thm:localequations-reg}, although involving the conditional distribution $\bar{\cmeas}[k]$, turn out to be quite useful and perform much better than a naive simulation of the system.
    We illustrate this point via two examples with Gaussian affine dynamics in Appendix \ref{sec:char_regular_tree}. 
    We also note that for 
    a certain class of dynamics, the \local\ in fact reduce to a tractable Markov process (e.g., see \cite{Wortsman18} for numerics and \cite{Cocomello-thesis,CocomelloRamanan2023exact} for a proof for continuous-time analogs). 
\end{Remark}

\begin{Remark}
    We now show how to write a recursive description of the joint probability of $(X_v[k])_{v=0}^\kappa$ using Theorem \ref{thm:localequations-reg}.
    For ease of writing, suppose $\PolishX$ is a countable state.
    Denote by $\Xbd:= (X_v)_{v=0}^\kappa$ and $\Xbd_{c(i)}:=(X_{ij})_{j=1}^{\kappa-1}$, $i \in \{1,\dotsc,\kappa\}$, for random variables.
    Similarly, denote by $\xbd:= (x_v)_{v=0}^\kappa$ and $\xbd_{c(i)}:=(x_{ij})_{j=1}^{\kappa-1}$, $i \in \{1,\dotsc,\kappa\}$, for deterministic values.
    Write
    $$\nu_k(\xbd[k]) := \Pmb(\Xbd[k]=\xbd[k]).$$
    As in Construction \ref{constr:regulartree_local}, denote the conditional probability
    \begin{align*}
        \cmeas_k((x_v[k])_{v=2}^\kappa \,|\, x_0[k], x_1[k]) & := \Pmb((X_v[k])_{v=2}^\kappa = (x_v[k])_{v=2}^\kappa \,|\, X_0[k]=x_0[k], X_1[k]=x_1[k]) \\
        & = \nu_k(\xbd[k]) / \Pmb(X_0[k]=x_0[k], X_1[k]=x_1[k]).
    \end{align*}    
    Also, define  the transition probability at time $k$ to be 
    $$p_k(y \,|\, x_0[k], (x_v[k])_{v=1}^\kappa) := \Pmb(y=F^k(x_0[k], (x_v[k])_{v=1}^\kappa,\xi(k+1))).$$ 
    Then from Theorem \ref{thm:localequations-reg} and Construction \ref{constr:regulartree_local} we can write
    \begin{align*}
        & \nu_{k+1}(\xbd[k+1]) \\
        & = \sum_{\xbd_{c(1)}[k],\dotsc,\xbd_{c(\kappa)}[k]} \Pmb(\Xbd[k+1]=\xbd[k+1], \Xbd_{c(v)}[k]=\xbd_{c(v)}[k], v\in\{1,\dotsc,\kappa\}) \\
        & = \sum_{\xbd_{c(1)}[k],\dotsc,\xbd_{c(\kappa)}[k]} \nu_k(\xbd[k]) \Pmb(\Xbd_{c(v)}[k]=\xbd_{c(v)}[k], v\in\{1,\dotsc,\kappa\} \,|\, \Xbd[k]=\xbd[k]) \\
        & \qquad \cdot \Pmb(\Xbd(k+1)=\xbd(k+1) \,|\, \Xbd[k]=\xbd[k], \Xbd_{c(v)}[k]=\xbd_{c(v)}[k], v\in\{1,\dotsc,\kappa\}) \\
        & = \sum_{\xbd_{c(1)}[k],\dotsc,\xbd_{c(\kappa)}[k]} \nu_k(\xbd[k]) \prod_{v=1}^\kappa \cmeas_k(\xbd_{c(v)}[k] \,|\, x_v[k],x_0[k]) \prod_{v=0}^\kappa p_k(x_v(k+1) \,|\, x_v[k], x_{N_v}[k]).
    \end{align*}
    If the transition kernel $F^k$ does not depend on the past trajectory, that is, $p_k(y \,|\, x_0[k], (x_v[k])_{v=1}^\kappa) = p_k(y \,|\, x_0(k), (x_v(k))_{v=1}^\kappa)$, then the last line can be simplified to conclude that  
    \begin{align*} 
        & \nu_{k+1}(\xbd[k+1]) \\
        & \quad =     
        \sum_{\xbd_{c(1)}(k),\dotsc,\xbd_{c(\kappa)}(k)} \nu_k(\xbd[k]) \prod_{v=1}^\kappa \cmeas_k(\xbd_{c(v)}(k) \,|\, x_v[k],x_0[k]) \prod_{v=0}^\kappa p_k(x_v(k+1) \,|\, x_v(k), x_{N_v}(k)). 
    \end{align*}
\end{Remark}

One might be tempted to compare the local-field equation of Theorem \ref{thm:localequations-reg} to the \emph{dynamic cavity method} introduced in \cite{KanoriaMontanari2011majority}. Both are essentially recursions based on the homogeneity of the regular tree,  but there are important distinctions. While the dynamic cavity method of 
\cite{KanoriaMontanari2011majority} was formulated for finite spaces on regular trees, the local-field equation allows for more general  Polish state spaces, can be defined on random GW trees (see Section \ref{se:GWresults} and Section \ref{se:UGWresults} below), and can accommodate more general, continuous-time dynamics \cite{LacRamWu-diffusion,CocomelloRamanan2023exact}.  
More importantly, while the dynamic cavity recursion 
\cite[Lemma 2.1]{KanoriaMontanari2011majority} 
provides a nice algorithmic method to obtain bounds for particle systems with finite spaces, it provides only an implicit description of the dynamics via a fixed point relation.   
In contrast, 
the local-field equation, in addition to providing an algorithm for simulating the marginals, also provides an explicit autonomous characterization of the marginal dynamics. Such a characterization has already been shown in other contexts to be useful for studying other properties of the system, including long-time behavior (see \cite{HuRam24,HuRam25} in the context of diffusions) and large deviations \cite{RamYas23}.

\subsection{Local-field equations on Galton-Watson Trees} \label{se:GWresults}

We now focus on a Galton-Watson tree $\tree \sim \mathrm{GW}(\rho_{\text{\myroot}},\rho)$ governed by two offspring distributions $\rho_{\myroot},\rho \in \P(\N_0)$. The root vertex has offspring distribution $\rho_{\myroot}$, and all subsequent generations have offspring distribution $\rho$.  In Section \ref{subs:ulam-harris-neveu}, we first 
introduce the Ulam-Harris-Neveu labeling of trees, 
which will allow us to adopt the convenient perspective of the tree  $\tree$ as a random subset of a larger deterministic set  $\V.$
Then in Section \ref{subs:GWresults} we introduce 
the dynamics and state the form of the \local. 

\subsubsection{Ulam-Harris-Neveu labeling of trees} \label{subs:ulam-harris-neveu}
We work with a standard labeling scheme for
trees known as the Ulam-Harris-Neveu labeling (see, e.g.,  \cite[Section VI.2]{harris-book} or \cite{neveu1986arbres}),
which we summarize here.
Define the vertex set 
\[
\V := \{\myroot \} \cup \bigcup_{k=1}^\infty\N^k, \qquad \V^0 := \V \setminus \{\myroot\}.
\]
For $u, v \in \V$, let $uv$ denote the concatenation, that is, if $u=(u_1,\ldots,u_k) \in \N^k$ and $v=(v_1,\ldots,v_j) \in \N^j$, then $uv = (u_1,\ldots,u_k,v_1,\ldots,v_j) \in \N^{k+j}$. The root $\myroot$ is the identity element, so $\myroot u = u \myroot = u$ for all $u \in \V$.  
For $v \in \V^0$, we write $\mom_v$ 
for the parent of $v$; precisely, $\mom_v$ is the unique element of $\V$ such that there exists
$k \in \N$ satisfying $v=\mom_{v}k$.  

There is a natural partial order on $\V$. We say $u \le v$ if there exists (a necessarily unique) $w \in \V$ such that $uw=v$, and say $u < v$ if the unique vertex $w$ is not $\myroot$. 
A tree is a subset $\tree \subset \V$ satisfying: 
\begin{enumerate}
\item $\myroot \in \tree$; 
\item If $v \in \tree$ and $u \le v$, then $u \in \tree$; 
\item For each $v \in \tree$ there exists an integer $c_v(\tree)$ such that, for $k \in \N$, we have $vk \in \tree$ if and only if $1 \le k \le c_v(\tree)$. 
\end{enumerate}
For a tree $\tree \subset \V$, we use the same symbol $\tree$ to refer to the corresponding graph, which has vertex set $\tree$ and edge set $\{(\mom_v,v) : v \in \tree \cap \V^0\}$.
For $\tree \subset \V$ and $v \in \V$, let $N_v(\tree)$ denote the set of neighbors of $v$ in $\tree$ if $v \in \tree$,  and set $N_v(\tree) = \emptyset$ if $v \notin \tree$. Formally, $N_{\myroot}(\tree) = \tree \cap \N$, and $N_v(\tree) = \{\mom_v\} \cap \{vk \in \tree : k \in \N\}$ for $v \in \tree \setminus \{\myroot\}$. By convention, set $N_v(\tree)=\emptyset$ for $v \notin \tree$.

It is convenient to also define 
$\V_n$ to be  the labels of the first $n$ generations: 
\begin{equation*}
	\V_n := \{\myroot \} \cup \bigcup_{k=1}^n\N^k.
\end{equation*}
Lastly, we will make use of the following notation. For $v \in \V^0$ we define $\V_{v+}$ and $\V_{v-}$ to be the disjoint sets of labels on opposite ``sides" of $(v,\mom_v)$, or more precisely, set 
\begin{equation}
	\label{def:V+V-}
	\V_{v+} := \{vu : u \in \V^0\} = \{u \in \V : u > v\}, \qquad
	\V_{v-} := \V \setminus (\V_{v+} \cup \{v,\mom_v\}). 
\end{equation}

\subsubsection{Dynamics and Results on Galton-Watson Trees}
\label{subs:GWresults} 

We now introduce the dynamics. 
We could of course first generate the tree $\tree$ and then construct the particle system $(X_v)_{v \in \tree}$, but it is convenient to work with a particle system $(X_v)_{v \in \V}$ defined on the entire set of labels. To facilitate this, we assume our state space $\PolishX$ contains an isolated point $\cem$,  which acts as a cemetery state. This point will be reserved exclusively for the states of vertices with labels $v \in \V \setminus \tree$. We also adopt the notation
\begin{equation*}
	1x := x, \qquad 0x := \cem, \quad \text{for } x \in \X.
\end{equation*}
With some abuse of notation, we also write $\cem$ in place of $(\cem,\ldots,\cem) \in \PolishX^k$ for any $k \in \N$. 
We can then define the particle system by
\begin{equation}
	X_v(k+1) = 1_{\{v \in \tree\}} F^k\left(X_v[k],X_{N_v(\tree)}[k],\xi_v(k+1)\right), \qquad k \in \N_0, \ \ v \in \V. \label{eq:GW}
\end{equation}
We again write $X_v[k]:=(X_v(i))_{i=0}^k$ and $X_v:=(X_v(i))_{i=0}^\infty$ for the finite and infinite trajectories.
It is often convenient to augment the state process by the random tree itself, by setting
\[
Y_v(k) = (1_{\{v \in \tree\}}, X_v(k)), \qquad \ \ Y_v[k] := (1_{\{v \in \tree\}}, X_v[k]).
\]
To be absolutely clear about the role of the cemetery state, we have $X_v(k+1)=\cem$ if $v \notin \tree$, and otherwise $X_v(k+1)=F^k(\dotsm)$ is updated according to $F^k$.
We make the following assumptions on the dynamics.

\begin{Condition}
\phantomsection
\label{cond:GW} {\ }
\begin{enumerate}
\item $\tree \sim \mathrm{GW}(\rho_{\myroot},\rho)$ and $X_\V(0)$ satisfy
\begin{equation} 
	\label{eq:GW_assumption_1}
	Y_{\V_{v+}}(0) \perp Y_{\V_{v-}}(0) \, | \, Y_{\{v,\mom_v\}}(0), \quad v \in \V^0,
\end{equation} 
with $\V_{v-}$ and $\V_{v+}$ as defined in \eqref{def:V+V-}, 
and the following invariance property holds: $\Lmc((Y_{vu}(0))_{u \in \V^0} \, | \, Y_v(0),Y_{\mom_v}(0))$ does not depend on the choice of $v \in \V^0$.
	\item The $\Nspace$-valued random variables $\{\xi_v(k) : v \in \V, k \in \N\}$ are i.i.d.
	\item $(\tree,X_{\V}(0))$ is independent of $\{\xi_v(k) : v \in \V, k \in \N\}$.
	\item $F^k \colon \PolishX^{k+1} \times \SQ(\PolishX^{k+1}) \times \Nspace \to \PolishX \setminus \{\cem\}$ is measurable for each $k \in \N_0$.
	\item For each $v \in \V$ we have almost surely $\{X_v(0) = \cem\} = \{v \notin \tree\}$.
	\end{enumerate} 	
\end{Condition}

\begin{Remark} \label{re:treemeasurable} 
Properties 4 and 5 of 
Condition \ref{cond:GW} ensure that in fact $\{X_v(k)=\cem\}=\{v\notin\tree\}$ almost surely for each $k \in \N$.
\end{Remark}

\begin{Example}
The most important part of Condition \ref{cond:GW} is (1), so we discuss a few special cases.
\begin{enumerate}[(a)]
\item Suppose $X_v(0)=1_{\{v \in \tree\}}x$ for all $v \in \V$, for some $x \in \PolishX$. Then Condition \ref{cond:GW}(1) becomes
\begin{equation*} 
\tree \cap \V_{v+} \perp \tree \cap \V_{v-}  \, | \, (1_{\{v \in \tree\}},1_{\{\mom_v \in \tree\}}), \quad v \in \V^0,
\end{equation*}
which is clearly true by the conditional independence structure of a Galton-Watson tree.
\item Suppose $\widetilde{X}_{\V}(0)$ is independent of $\tree$ and satisfies 
\begin{equation*}
\widetilde{X}_{\V_{v+}}(0) \perp \widetilde{X}_{\V_{v+}}(0) \, | \, \widetilde{X}_{\{v,\mom_v\}}(0), \quad v \in \V^0.
\end{equation*}
Let $X_v(0)=1_{\{v \in \tree\}}\widetilde{X}_v(0)$. Then \eqref{eq:GW_assumption_1} holds. In particular, Condition \ref{cond:GW}(1) holds if $(X_v(0))_{v \in \V}$ are i.i.d.\ and independent of $\tree$.
\item We can also achieve Condition \ref{cond:GW}(1) by jointly generating the initial states along with the tree, with the same conditional independence structure as the $\mathrm{GW}(\rho_{\myroot},\rho)$ tree itself. 
Let $\mu_{\myroot}^* \in \P(\X)$ and $\mu_{\myroot},\mu \in \P(\cup_{n \in \N_0} (\{n\} \times \PolishX^n))$, recalling that $\PolishX^0 :=\{\emptyset\}$. 
Suppose that $\mu_{\myroot}(\{n\}\times \PolishX^n)=\rho_{\myroot}(n)$ and $\mu(\{n\} \times \PolishX^n) = \rho(n)$ for each $n \in \N_0$.
First, we generate $X_{\myroot}(0) \sim \mu_{\myroot}^*$. 
Then, generate $(N,(Z_i)_{i=1}^N) \sim \mu_{\myroot}$, declare $i \in \tree$ for $i=1,\ldots,N$, and set $X_i(0)=Z_i$ for these $i$. 
Then, for each $i \in \tree$, generate independently $(N_i,(Z_{ij})_{j=1}^{N_i}) \sim \mu$, declare $ij \in \tree$ for $j=1,\ldots,N_i$, and set $X_{ij}(0)=Z_{ij}$. 
Continue in this fashion, using $\mu$ to generate jointly the number of offspring and the corresponding initial states.
\end{enumerate}
\end{Example}

The following construction describes the \local\ for the $\mathrm{GW}(\rho_{\myroot},\rho)$ tree:

\begin{Construction}[Local-field equations on GW trees] \label{constr:GWlocal}
Suppose Condition \ref{cond:GW} holds. We construct a random tree $\bar{\tree}$ and an $\PolishX^{\V_2}$-valued process $\Xbar_{\V_2}$ as follows.
\begin{enumerate}[(i)]
\item Initialize by letting $\bar{\tree}$ be a random tree and $\Xbar_{\V_2}(0)$ a $\PolishX^{\V_2}$-valued random variable with $(\bar{\tree},\Xbar_{\V_2}(0)) \stackrel{d}{=} (\tree \cap \V_2,X_{\V_2}(0))$.
\item Proceeding recursively, for $k \ge 0$ we generate $\Xbar_{\V_2}(k+1)$ from
$\Xbar_{\V_2}[k]$ and $\bar{\tree}$ as follows:
\begin{itemize}
\item Define a kernel $\bar{\cmeas}[k]$ by 
\[
\bar{\cmeas}[k](\cdot \,|\, x_1,x_{\myroot}) := \Lmc(\lan \Xbar_{N_1(\bar{\tree})}[k]\ran \,|\, \Xbar_1[k]=x_1,\Xbar_{\myroot}[k]=x_{\myroot}).
\]
This is a $\Lmc(\Xbar_1[k],\Xbar_{\myroot}[k])$-a.e.\ well-defined random measure on $\SQ(\PolishX^{k+1})$.
\item Generate i.i.d.\ random variables $(\bar{\xi}_{v}(k+1))_{v \in \V_2}$ with the same law as $\xi_v(k)$.
\item For each $v \in \bar{\tree} \setminus \V_1$ generate $\SQ(\PolishX^{k+1})$-valued random variables $\Zbar^k_v$  conditionally independent given $\Ybar_{\V_2}[k]$, with conditional distribution given by
\begin{equation*}
\Zbar^k_v \sim \bar{\cmeas}[k](\cdot \,|\, \Xbar_v[k], \Xbar_{\mom_v}[k]).
\end{equation*}
In other words, for Borel sets $B_v \subset \SQ(\PolishX^{k+1})$, $v \in \V_2 \setminus \V_1$, we have
\begin{equation}
\PP\Big(\Zbar^k_v \in B_v, \, v \in  \bar{\tree} \setminus \V_1 \,\Big|\, \bar{\tree}, \, \Xbar_{\V_2}[k]\Big) = \prod_{v \in  \bar{\tree} \setminus \V_1}\bar{\cmeas}[k](B_v \,|\, \Xbar_v[k], \Xbar_{\mom_v}[k]). \label{def:updaterule}
\end{equation}
\item Finally, set
\begin{align}
\begin{split}
\Xbar_v(k+1) &= 1_{\{v \in \bar{\tree}\}} F^k\big(\Xbar_v[k],\Xbar_{N_v(\bar{\tree})}[k],\xibar_v(k+1)\big), \quad v \in \V_1, \\
\Xbar_v(k+1) & = 1_{\{v \in \bar{\tree}\}} F^k\big(\Xbar_v[k],\Zbar^k_v,\xibar_v(k+1)\big), \qquad \qquad v \in  \V_2 \setminus \V_1.
\end{split} \label{eq:Xbar-GW}
\end{align}
\end{itemize}
\end{enumerate}
\end{Construction}

\begin{Remark}
Note that $\bar{\cmeas}[k]$ is defined as the conditional law of the $\SQ(\PolishX^{k+1})$-valued random variable $\lan \Xbar_{N_1(\bar{\tree})}[k]\ran$, which includes the variable $\Xbar_{\myroot}[k]$ on which we are conditioning since $\myroot \in N_1(\bar{\tree})$. Hence, the random (unordered) vector $\Zbar^k_v$ always includes $\Xbar_{\mom_v}[k]$ as one element. This redundancy only serves to streamline the presentation. One could instead define $\bar{\cmeas}$ to be the conditional law of $\lan \Xbar_{N_1(\bar{\tree}) \setminus \{\myroot\}}[k]\ran$, as long as one concatenates the resulting $\Zbar^k_v$ with $\Xbar_{\mom_v}[k]$.
\end{Remark}

The second main result of the paper is the following, which shows that the local-field equations in the above construction characterize the dynamics of the first two generations of the particle system set on the $\mathrm{GW}(\rho_{\myroot},\rho)$ tree.
Section \ref{sec:GW} is devoted to the proof.

\begin{Theorem}[Local-field equations characterize marginals on GW trees]  \label{thm:localequations-GW}
Suppose Condition \ref{cond:GW} holds. Let $\tree \sim \mathrm{GW}(\rho_{\myroot},\rho)$, and let $X$ be as in \eqref{eq:GW}. Let $(\bar{\tree},\Xbar)$ be as in Construction \ref{constr:GWlocal}. Then the $(\{0,1\} \times \PolishX^\infty)^{\V_2}$-valued random variables $(1_{\{v \in \tree\}},X_v)_{v \in \V_2}$ and $(1_{\{v \in \bar{\tree}\}},\Xbar_v)_{v \in \V_2}$ have the same distribution.
\end{Theorem}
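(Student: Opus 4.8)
The plan is to reduce the claimed equality of the full (infinite-time) trajectory laws on $\V_2$ to the finite-time statement that $(Y_v[k])_{v \in \V_2} \stackrel{d}{=} (\Ybar_v[k])_{v \in \V_2}$ for every $k \in \N_0$, where $\Ybar_v[k] := (1_{\{v \in \bar{\tree}\}}, \Xbar_v[k])$; since the finite trajectories determine the infinite one, a routine consistency argument then yields the theorem. I would prove the finite-time statement by induction on $k$. The base case $k=0$ is exactly the initialization in Construction \ref{constr:GWlocal}(i). Throughout, two structural properties of the true system, propagated from time $0$ to all times, do the heavy lifting: a trajectory-level second-order Markov random field property across each edge, and the spatial homogeneity of the conditional law of a subtree given the trajectories on its root edge.

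The first task is therefore a structural lemma (the Galton-Watson analogue of Theorem \ref{thm:cond_independence}), asserting that for every $k$,
\[
Y_{\V_{v+}}[k] \perp Y_{\V_{v-}}[k] \mid Y_{\{v,\mom_v\}}[k], \qquad v \in \V^0,
\]
and that $\Lmc\big((Y_{vu}[k])_{u \in \V^0} \mid Y_v[k], Y_{\mom_v}[k]\big)$ does not depend on $v \in \V^0$. Both hold at $k=0$ by Condition \ref{cond:GW}(1), and I would propagate them by induction: because the update \eqref{eq:GW} of a vertex $w$ reads only $w$'s own history and that of its neighbors $N_w(\tree)$, and because the noises are i.i.d.\ and independent of $(\tree, X_\V(0))$, the edge $(v,\mom_v)$ continues to separate the two sides at time $k+1$, while the branching property of $\mathrm{GW}(\rho_{\myroot},\rho)$ preserves the homogeneity. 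I would also record the stronger consequence needed below: conditionally on $Y_{\V_2}[k]$, the descendant families $(Y_{\V_{v+}}[k])_{v \in \N^2}$ are mutually independent, each with conditional law governed by the homogeneous kernel and depending only on $(Y_v[k], Y_{\mom_v}[k])$. On a tree this mutual independence follows by iterating the single-edge property, since the sets $\V_{v+}$, $v \in \N^2$, are pairwise disjoint and separated from one another by the layer $\V_2$.

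For the inductive step, fix $k$ and condition on $(Y_v[k])_{v \in \V_2}$. Each $v \in \V_1$ has all of its neighbors inside $\V_2$, so in both systems $X_v(k+1)$ (resp.\ $\Xbar_v(k+1)$) is obtained by applying $F^k$ to data already matched by the inductive hypothesis. The only genuine issue is the update of the generation-two vertices $v \in \N^2$, whose neighbor collection $\lan X_{N_v(\tree)}[k]\ran$ involves the generation-three trajectories $(X_{vj}[k])_j$ that are not tracked. Here the structural lemma applies: given $Y_{\V_2}[k]$, these collections are conditionally independent across $v \in \N^2$, and by homogeneity each has conditional law $\Lmc(\lan X_{N_v(\tree)}[k]\ran \mid X_v[k], X_{\mom_v}[k])$ equal to the generation-one kernel $\bar{\cmeas}[k]$ of Construction \ref{constr:GWlocal} (the redundant inclusion of $\Xbar_{\mom_v}[k]$ in $\Zbar^k_v$ matching that $\mom_v \in N_v(\tree)$). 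Crucially, $\bar{\cmeas}[k]$ is a functional of the law of $(X_v[k])_{v \in \V_2}$ alone, since the vertices appearing in its definition, namely $\myroot$, a generation-one vertex, and the latter's children, all lie in $\V_2$; so by the inductive hypothesis the kernel coincides for the two systems. Thus the phantom sampling \eqref{def:updaterule} reproduces exactly the conditional law of the true missing neighbor data, and since $F^k$ depends on its neighbor argument only through the unordered $\SQ(\PolishX^{k+1})$-valued collection and the driving noises share the same law, the configurations $(Y_v[k+1])_{v\in\V_2}$ and $(\Ybar_v[k+1])_{v\in\V_2}$ agree in law, completing the induction.

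I expect the main obstacle to be the structural lemma of the second paragraph, specifically upgrading the single-edge second-order conditional independence at time $0$ to the full mutual independence of all the hanging subtrees at an arbitrary time $k$, while simultaneously tracking the homogeneity of the conditional kernels. The second-order (rather than first-order) nature of the Markov property, one must condition on an adjacent pair $(v,\mom_v)$, together with the random tree structure (random offspring numbers, the cemetery state $\cem$, and the variable-length unordered collections in $\SQ(\PolishX^{k+1})$), makes the bookkeeping delicate; care is needed to ensure that the separating layer and the homogeneous kernel are correctly identified at each generation. Once this lemma is in place, the induction above amounts to a careful matching of conditional laws. The full details are carried out in Section \ref{sec:GW}.
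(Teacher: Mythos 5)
Your proposal is correct and follows essentially the same route as the paper: an induction on $k$ whose engine is exactly the paper's Proposition 4.1 (the edge-wise second-order Markov random field property together with the homogeneity of the conditional law of a hanging subtree given the trajectories on its root edge), followed by the identification of the true conditional law of the missing generation-three data with the kernel $\bar{\cmeas}[k]$, which is a functional of $\Lmc(Y_{\V_2}[k])$ alone and hence matched by the inductive hypothesis. The points you flag as delicate (iterating the single-edge conditional independence to get mutual independence of the hanging subtrees given $Y_{\V_2}[k]$, the cemetery-state bookkeeping, and the redundant inclusion of $\Xbar_{\mom_v}[k]$ in $\Zbar^k_v$) are precisely the ones the paper handles in Proposition 4.1, equation (4.10), and the remark following Construction 2.10.
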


\subsection{The local-field equation on a unimodular Galton-Watson tree} \label{se:UGWresults}

For our last variety of \local, we show how to refine Theorem \ref{thm:localequations-GW} when the tree is \emph{unimodular}.
For a $\mathrm{GW}(\rho_{\myroot},\rho)$ tree, this means that $\rho_{\myroot}$ and $\rho$ relate in the following manner:
\begin{equation}
	\rho(k) = \frac{(k+1)\rho_{\myroot}(k+1)}{\sum_{n\in\N}n\rho_{\myroot}(n)}, \quad k \in \N_0. \label{def:rho-unimod}
\end{equation}
Let us write $\mathrm{UGW}(\rho_{\myroot})$ for $\mathrm{GW}(\rho_{\myroot},\rho)$, where $\rho$ is given by \eqref{def:rho-unimod}, and we assume throughout that $\rho_{\myroot}$ has finite first moment so that this makes sense.

To define unimodularity properly, we need some terminology for spaces of marked graphs: A \emph{rooted graph} $(G,o)$ is a a connected locally finite graph (with finite or countable vertex set) together with a distinguished vertex $o$, and two rooted graphs are \emph{isomorphic} if there exists an isomorphism between the graphs which maps the root to the root. For a Polish space $\Polish$, a \emph{rooted $\Polish$-marked graph} is a triple $(G,\polish,o)$, where $(G,o)$ is a rooted graph and $\polish=(\polish_v)_{v\in V} \in \Polish^V$ are the \emph{marks}. Two rooted $\Polish$-marked graphs are \emph{isomorphic} if there exists an isomorphism between the rooted graphs which maps the marks $\polish^1$ to $\polish^2$.
Let $\G_*$ (resp.\ $\G_*[\Polish]$) denote the set of isomorphism classes of rooted graphs (resp.\ $\Polish$-marked rooted graphs). There is a natural (Polish) topology on $\Gmc_*[\Polish]$ corresponding to \emph{local convergence}: Let $B_r(G,o)$ denote the set of vertices within (graph distance) $r > 0$ around the root $o$. Fixing some metric on $\Polish$, the distance between $(G_1,\polish^1,o_1)$ and $(G_2,\polish^2,o_2)$ is defined as $1/(1+r)$, where $r$ is the supremum of those $r > 0$ such that there is an isomorphism $\varphi$ between $(B_r(G_1,o_1),o_1)$ and $(B_r(G_2,o_2),o_2)$ such that the distance between the marks $\polish^1_v$ and $\polish^2_{\varphi(v)}$ is at most $1/r$  for all $v \in B_r(G_1,o_1)$. See \cite{aldous-lyons} or the appendix of \cite{LacRamWu-convergence} for further details on this space.

Similarly, we define the space $\Gmc_{**}[\Polish]$ of isomorphism classes of \emph{doubly-rooted} $\Polish$-marked graphs $(G,\polish,o,\tilde o)$, where now $o,\tilde o$ are two (possibly equal) distinguished vertices of $G$, and ``isomorphism" must respect both roots. This space too admits a natural (Polish) local convergence topology, metrized as above but now with isomorphisms considered between $(B_r(G_i,o_i) \cup B_r(G_i,\tilde o_i),o_i,\tilde o_i)$, for $i=1,2$.

We may finally define unimodularity as follows. Again let $\Polish$ be a Polish space. We say that a $\G_*[\Polish]$-valued random variable $(G,Z,o)$ is \emph{unimodular} if, for any bounded nonnegative Borel-measurable function $F$ on $\G_{**}[\Polish]$, we have
\begin{equation}
\E\left[\sum_{\tilde o \in G}F(G,Z,o,\tilde o)\right] = \E\left[\sum_{\tilde o \in G}F(G,Z,\tilde o,o)\right] \label{def:unimodularity}
\end{equation}
Similarly, a $\G_*$-valued random variable $(G,o)$ is unimodular if the same is true but with $Z$ omitted (equivalently, specializing the above definition to the trivial one-point mark space).

It is will known that if $\tree \sim \mathrm{UGW}(\rho_{\myroot})$, then the random rooted tree $(\tree,\myroot)$ is unimodular. Clearly if we equip a unimodular random graph with i.i.d.\ marks then the resulting marked graph remains unimodular. We will show that the dynamics studied in this paper, such as  \eqref{eq:GW}, propagate unimodularity over time.

We make the following assumptions on the dynamics, once again using the Ulam-Harris-Neveu labeling introduced in Section \ref{subs:ulam-harris-neveu}.

\begin{Condition}
	\phantomsection
	\label{cond:UGW}
	{\ }
	\begin{enumerate}
	\item $\tree \sim \mathrm{UGW}(\rho_{\myroot})$, where $\rho_{\myroot}$ has finite first moment, and $X_\V(0)$ together satisfy the following, with $Y_v(0)=(1_{\{v \in \tree\}},X_v(0))$:
	\begin{enumerate}
	\item For each $v \in \V^0$, $Y_{\V_{v+}}(0) \perp Y_{\V_{v-}}(0) \,|\, Y_{\{v,\mom_v\}}(0)$.
	\item The conditional law $\Lmc((Y_{vu}(0))_{u \in \V^0} \,|\, (Y_v(0),Y_{\mom_v}(0)))$ does not depend on the choice of $v \in \V^0$.
	\item For each $n\in\N$ satisfying $\PP(|N_{\myroot}(\tree)|=n) > 0$, and each permutation $\pi$ of $\{1,\ldots,n\}$, the conditional law of $(Y_{\myroot}(0),(Y_{\pi(1)v}(0))_{v \in \V},\dotsc,(Y_{\pi(n)v}(0))_{v \in \V})$ given $|N_{\myroot}(\tree)|=n$ is the same as that of $(Y_{\myroot}(0),(Y_{1v}(0))_{v \in \V},\dotsc,(Y_{nv}(0))_{v \in \V})$.
	\item The $\G_*[\X]$-valued random variable $(\tree,X_\tree(0))$ is unimodular.
	\end{enumerate}
	\item The $\Nspace$-valued random variables $\{\xi_v(k) : v \in \V, k \in \N\}$ are i.i.d.
	\item $(\tree,X_\V(0))$ is independent of $\{\xi_v(k)\}_{v \in \V, k \in \N}$.
	\item $F^k \colon \PolishX^{k+1} \times \SQ(\PolishX^{k+1}) \times \Nspace \to \PolishX \setminus \{\cem\}$ is measurable for each $k \in \N_0$.
	\item For each $v \in \V$ we have almost surely $\{X_v(0) = \cem\} = \{v \notin \tree\}$.
	\end{enumerate} 	
\end{Condition}

Note that Condition \ref{cond:UGW} is stronger than Condition \ref{cond:GW}; in order to simplify the form of the \local, we need to assume not only that the tree is unimodular but also that the initial conditions satisfy additional symmetry properties, namely (1c) and (1d).
Note also that condition (5) implies that $\{v \in \tree\}$ is a.s.\ $X_v(0)$-measurable for each $v$, and so the conditions (1a--c) could be equivalently written with $X$ in place of $Y$; we prefer to write it with $Y$ to stress that these are really assumptions on the joint distribution of $(\tree,X_\tree(0))$, not just on $X_{\V}(0)$.

The following construction describes the \local\ for the $\mathrm{UGW}(\rho_{\myroot})$ tree.

\begin{Construction}[Local-field equations on UGW trees] \label{constr:UGWlocal}
Suppose Condition \ref{cond:UGW} holds. We construct a random tree $\bar{\tree}$, an $\PolishX^{\V_1}$-valued process $\Xbar_{\V_1}$, and an $\N_0$-valued process $\Nhat$ as follows.
\begin{enumerate}
                                                                                                                                             	\item Initialize by letting $\bar{\tree}$ be a random tree, $\Xbar_{\V_1}(0)$ an $\PolishX^{\V_1}$-valued random variable, and $\Nhat(0)$ an $\N_0$-valued random variable, with $(\bar{\tree},\Xbar_{\V_1}(0),\Nhat(0)) \sim (\tree \cap \V_1,X_{\V_1}(0),|N_1(\tree)|)$. Note that necessarily $\{\Nhat(0)=0\}=\{|N_{\myroot}(\bar{\tree})|=0\}$ a.s., since by convention $N_1(\tree)=\emptyset$ if $1 \notin \tree$.
	\item If $\bar{\tree} = \{\myroot\}$, then we simply set $\Xbar_{\myroot}(k+1)=F^k(\Xbar_{\myroot}[k],\emptyset,\xi_v(k+1))$, $\Nhat(k)=0$, and $\Xbar_v(k+1) = \cem$ for $v \in \N$ and $k \in \N_0$.
	\item If $\bar{\tree}\neq \{\myroot\}$, we proceeding recursively as follows. For $k \ge 0$, suppose we know  $\Xbar_{\V_1}[k]$, $\Nhat(k)$, and $\bar{\tree}$, with $\Nhat(k) \ge 1$ a.s. Generate $\Xbar_{\V_1}(k+1)$ and $\Nhat(k+1)$ as follows:
	\begin{itemize}
	\item Define a kernel $\bar{\cmeas}[k]$ by setting, for Borel sets $B \subset \SQ(\PolishX^{k+1})$,
	\begin{equation*}
	 \bar{\cmeas}[k](B \,|\, x_{\myroot},x_1) := \frac{\Emb\left[\frac{|N_{\myroot}(\bar{\tree})|}{\Nhat(k)}1_{\{\lan \Xbar_{N_{\myroot}(\bar{\tree})}[k] \ran \in B\}} \,|\, \Xbar_{\myroot}[k]=x_{\myroot}, \, \Xbar_1[k]=x_1\right]}{\Emb\left[\frac{|N_{\myroot}(\bar{\tree})|}{\Nhat(k)} \,|\, \Xbar_{\myroot}[k]=x_{\myroot},\,\Xbar_1[k]=x_1\right]},
	\end{equation*}
	with the convention that $0/0:=1$.
	\item Generate i.i.d.\ random variables $(\xibar_v(k+1))_{v \in \V_1}$ with the same law as $\xi_v(k)$.
	\item For each $v \in N_{\myroot}(\bar{\tree})$ generate $\SQ(\PolishX^{k+1})$-valued random variables $\Zbar^k_v$ which are conditionally independent, with conditional distribution given by
	\begin{equation*}
	\Zbar^k_v \sim \bar{\cmeas}[k](\cdot \,|\, X_v[k],X_{\myroot}[k]).
	\end{equation*}
	In other words, for Borel sets  $B_v \subset \SQ(\PolishX^{k+1})$, $v \in \N$, we have
	\begin{equation}
	\PP\big( \Zbar^k_v \in B_v, \, v \in N_{\myroot}(\tree) \,\big|\, \bar{\tree}, \Xbar_{\V_1}[k]\big) = \prod_{v \in N_{\myroot}(\tree)}\bar{\cmeas}[k](B_v \,|\, X_v[k],X_{\myroot}[k]). \label{def:ZbardistUGW}
	\end{equation}
	Set $\Nhat(k+1)=|\Zbar^k_1|$, where we recall that $|\cdot|$ denotes the length of a vector.
	\item Finally, set 
	\begin{align*}
		\Xbar_{\myroot}(k+1) &= F^k\big(\Xbar_{\myroot}[k],\Xbar_{N_{\myroot}(\bar{\tree})}[k],\xibar_{\myroot}(k+1)\big) \\
		\Xbar_v(k+1) &= 1_{\{v \in \bar{\tree}\}}F^k\big(\Xbar_v[k], \Zbar^k_v,\xibar_v(k+1)\big), \quad v \in \N.
	\end{align*}
	\end{itemize}
	\end{enumerate}
\end{Construction}

\begin{Remark}
To check that the recursion is well defined, note that for $\bar{\tree} \neq \{\myroot\}$ the random measure $\bar{\cmeas}[k](\cdot \,|\, \Xbar_{\myroot}[k],\Xbar_1[k])$ is a.s.\ supported on the set of vectors in $\SQ(\PolishX^{k+1})$ of nonzero length, and it follows that $\Nhat(k+1)=|\Zbar^k_1| \ge 1$ a.s.
\end{Remark}

The following is the third main result of the paper, and its proof is given in Section \ref{sec:pf-UGW}.

\begin{Theorem}[Reduced \local\ for UGW trees]
 \label{thm:localequations-UGW}
Suppose Condition \ref{cond:UGW} holds. Let $\tree \sim \mathrm{UGW}(\rho_{\myroot})$, and let $X$ be as in \eqref{eq:GW}, and also let $(\bar{\tree},\Xbar)$ be as in Construction \ref{constr:UGWlocal}. 
Then for each $k \in \N_0$, we have
\begin{equation*}
\big((1_{\{v \in \tree\}},X_v[k])_{v \in \V_1}, \, |N_1(\tree))|\big) \stackrel{d}{=} \big((1_{\{v \in \bar{\tree}\}},\Xbar_v[k])_{v \in \V_1}, \, \Nhat(k)\big).
\end{equation*}
In particular, the $(\{0,1\} \times \PolishX^\infty)^{\V_1}$-valued random variables $(1_{\{v \in \tree\}},X_v)_{v \in \V_1}$ and $(1_{\{v \in \bar{\tree}\}},\Xbar_v)_{v \in \V_1}$ have the same law.
\end{Theorem}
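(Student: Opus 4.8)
The plan is to prove the displayed trajectory-level identity by induction on $k$, the case $k=0$ being exactly the initialization in Construction \ref{constr:UGWlocal}(1). Throughout write $Z_v := X_v[k]$ for the time-$k$ histories of the true system \eqref{eq:GW} on $\tree\sim\mathrm{UGW}(\rho_{\myroot})$. The inductive step amounts to showing that one step of the reduced recursion reproduces, on the first generation $\V_1$ together with the auxiliary count, the one-step evolution of $(X_v)_{v\in\V_1}$ and $|N_1(\tree)|$. The root update is immediate: $X_{\myroot}(k+1)=F^k(X_{\myroot}[k],\lan X_{N_{\myroot}(\tree)}[k]\ran,\xi_{\myroot}(k+1))$ depends only on first-generation histories, which agree with the reduced construction by the inductive hypothesis together with the equidistribution of the noises. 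The real content is in the update of the children $v\in N_{\myroot}(\tree)$, whose true increment $F^k(X_v[k],\lan X_{N_v(\tree)}[k]\ran,\xi_v(k+1))$ involves the second-generation histories $\lan X_{N_v(\tree)}[k]\ran=\{X_{\myroot}[k]\}\cup\{X_{vj}[k]:vj\in\tree\}$ that the reduced construction does not track and must regenerate as the phantom vector $\Zbar^k_v$.

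Two structural facts about the true system at time $k$ are needed, both obtained by propagation from $k=0$. First, from Condition \ref{cond:UGW}(1a)--(1b), iterated across the children of the root exactly as in the Galton--Watson analysis of Section \ref{sec:GW}, conditionally on the first-generation data $((X_u[k])_{u\in\V_1},\,\tree\cap\V_1)$ the family $\big(\lan X_{N_v(\tree)}[k]\ran\big)_{v\in N_{\myroot}(\tree)}$ is independent across the children $v$, each term distributed according to the \emph{child-centred kernel}
\[
	\bar\cmeas^{\mathrm{GW}}[k](\cdot\,|\,a,b):=\Lmc\big(\lan X_{N_1(\tree)}[k]\ran\,\big|\,X_1[k]=a,\ X_{\myroot}[k]=b\big);
\]
this is precisely the conditional structure underlying Theorem \ref{thm:localequations-GW}. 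Second, I will establish a propagation-of-unimodularity lemma: since \eqref{eq:GW} applies a fixed automorphism-equivariant local map driven by i.i.d.\ noise marks, the marked tree $(\tree,(X_v[k])_{v\in\tree},\myroot)$ is unimodular for every $k$, and the children-exchangeability of Condition \ref{cond:UGW}(1c) likewise persists at the trajectory level.

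The crux is to identify $\bar\cmeas^{\mathrm{GW}}[k]$ with the size-biased \emph{root-centred kernel} $\bar\cmeas^{\mathrm{UGW}}[k]$ of Construction \ref{constr:UGWlocal}, under the inductive identification $\Nhat(k)\leftrightarrow|N_1(\tree)|$. Applying the mass-transport principle \eqref{def:unimodularity} to $(\tree,(Z_v)_v,\myroot)$ with the transport function
\[
	F(G,z,o,\tilde o)=1_{\{\tilde o\in N_o(G)\}}\,\phi(z_o)\,g(\lan z_{N_o(G)}\ran)\,\frac{\psi(z_{\tilde o})}{|N_{\tilde o}(G)|},
\]
and using children-exchangeability twice (once to turn the weighted root term into a symmetric sum over children, once to reverse the procedure after transport), yields for all bounded $\phi,\psi,g$ the identity
\[
	\E\!\left[\frac{|N_{\myroot}(\tree)|}{|N_1(\tree)|}\,\phi(Z_{\myroot})\,\psi(Z_1)\,g(\lan Z_{N_{\myroot}(\tree)}\ran)\right]=\E\big[\phi(Z_1)\,\psi(Z_{\myroot})\,g(\lan Z_{N_1(\tree)}\ran)\big].
\]
In measure-theoretic terms the $\tfrac{|N_{\myroot}|}{|N_1|}$-reweighting of the law of $(Z_{\myroot},Z_1,\lan Z_{N_{\myroot}}\ran)$ coincides with the law of $(Z_1,Z_{\myroot},\lan Z_{N_1}\ran)$; disintegrating both on the two mark-coordinates gives exactly $\bar\cmeas^{\mathrm{UGW}}[k](\cdot\,|\,a,b)=\bar\cmeas^{\mathrm{GW}}[k](\cdot\,|\,a,b)$, the conditional normalization in the definition of $\bar\cmeas^{\mathrm{UGW}}[k]$ being precisely the Radon--Nikodym normalization of this reweighting. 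Taking $\phi=\psi=g\equiv 1$ also shows the global weight integrates to $1$, so no spurious constant intrudes.

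With the kernel identity in hand the inductive step closes: the phantom $\Zbar^k_v\sim\bar\cmeas^{\mathrm{UGW}}[k](\cdot\,|\,\Xbar_v[k],\Xbar_{\myroot}[k])$ acquires, by the identification above, exactly the child-centred law of the true neighbourhood $\lan X_{N_v(\tree)}[k]\ran$; the phantoms are drawn conditionally independently across $v$ in agreement with the first structural fact; feeding them into $F^k$ reproduces the children's true increments; and $\Nhat(k+1)=|\Zbar^k_1|$ records the length of this neighbourhood, i.e.\ $|N_1(\tree)|$, consistently with the (time-invariant) true degree. I expect the main obstacle to be precisely this reconciliation — getting the two competing size-biasings to cancel correctly, namely verifying that $|N_{\myroot}(\tree)|/\Nhat(k)$ is exactly the correction converting the root-centred neighbourhood law (offspring law $\rho_{\myroot}$) into the child-centred one (offspring law $\rho$ from \eqref{def:rho-unimod}), and that the regenerated count $\Nhat(k+1)$ remains jointly consistent with the histories. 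The unimodularity relation \eqref{def:rho-unimod} and the mass-transport computation above are what make this work; the degenerate case $\bar\tree=\{\myroot\}$ is handled separately and routinely.
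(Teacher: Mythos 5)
Your proposal is correct and follows essentially the same route as the paper: induction on $k$, propagation of the conditional-independence/exchangeability/unimodularity properties of Condition \ref{cond:UGW}(1) (the paper's Proposition \ref{prop:properties-GW} and Lemmas \ref{lem:exch}--\ref{lem:unimodular}), and a mass-transport computation combined with leaf exchangeability to identify the size-biased root-centred kernel $\bar\cmeas[k]$ with the child-centred conditional law $\Lmc(\lan X_{N_1(\tree)}[k]\ran \,|\, X_1[k],X_{\myroot}[k])$ (the paper's Proposition \ref{prop:property-UGW}). The only cosmetic differences are that you place the degree normalization at the target vertex of the transport function rather than the source and conclude the kernel identity by directly disintegrating the resulting reweighted-law identity, where the paper instead applies the rerooting identity twice to produce the functions $\varphi_h,\varphi_1$ and divides; both yield the same statement, provided you keep the indicator $1_{\{1\in\tree\}}$ throughout as you note in handling the degenerate case.
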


\subsection{Approximation of dynamics on finite graphs} \label{se:finapprox}

We briefly discuss some ramifications of the results on the \local\ presented above, for approximation of finite graphs. To state these results we set up some notation for particle systems defined on arbitrary locally finite graphs.

Suppose we are given a continuous transition function $F^k \colon \PolishX^{k+1} \times \SQ(\PolishX^{k+1}) \times \Nspace \rightarrow \PolishX$, as well as a noise distribution $\theta \in \P(\Nspace)$. Then for any locally finite graph $G=(V,E)$ with finite or countable vertex set (not necessarily connected), and for any initial distribution of $X_V(0)$, we let $\{\xi_v(k) : v \in V, k \in \N\}$ be i.i.d.\ and independent of $X_V(0)$, and we define
\begin{equation*}
X^G_v(k+1) = F^k(X^G_v[k],X^G_{N_v(G)}[k],\xi_v(k+1)), \quad v \in V, \ \ k \in \N_0.
\end{equation*}

Recall the notion of local convergence of graphs and the space $\Gmc_*$ introduced in Section \ref{se:UGWresults}.
It is shown in \cite[Section 3.1]{LacRamWu-convergence} that for any $\Gmc_*$-valued random variable $G$, the above particle system $(G,X^G)$ is a well-defined $\Gmc_*[\PolishX^\infty]$-valued random variable. 
Moreover, its distribution varies continuously with that of the $\Gmc_*[\X]$-valued random variable $(G,X^G(0))$ (see Theorem 3.2 in \cite{LacRamWu-convergence}). 
Hence, the $n\to\infty$ limiting behavior of $(G_n,X^{G_n})$ can be characterized in terms of the \local, for any sequence of random graphs $G_n$ converging locally to either a regular tree, a $\mathrm{GW}(\rho_{\myroot},\rho)$ tree, or a $\mathrm{UGW}(\rho_{\myroot})$ tree, as long as the initial conditions converge as well. 
Here we provide three noteworthy examples of random graph models and associated graph sequences.

In the following, if $G_n$ is a (random) finite graph which is not necessarily connected, let $\myroot_n$ denote a uniformly random vertex in $G_n$, and let $\mu^{G_n}$ 
 be the global empirical measure of the particle system on that graph, as defined in  \eqref{def-mugn}.

\begin{Theorem}[Characterization of limits of global empirical measures]
\label{thm:cvg-eg}
Let any of the following three cases hold for $\{G_n\}_{n \in \N}$:
\begin{enumerate}[(i)]
    \item Let $\kappa \ge 2$ be an integer. For each $n \ge \kappa+1$ with $n\kappa$ even, there exists at least one $\kappa$-regular graph on $n$ vertices, and we let $G_n$ be chosen uniformly at random.
    \item Let $\lambda > 0$, and let $G_n$ be the \Erdos graph $G(n,\lambda/n)$, on the vertex set $\{1,\ldots,n\}$.
    \item Let $G_n$ be drawn from the configuration model with vertex set $\{1,\ldots,n\}$ and with degree distribution converging weakly to $\rho_{\myroot}$.
    Assume $\rho_{\myroot}$ has finite first and second moments, that is, $\sum_{k=0}^\infty k^2 \rho_{\myroot}(k) < \infty$.
\end{enumerate}
Suppose the initial states $\{X^{G_n}_v(0) : v \in G_n, n \in \N\}$ are i.i.d.\ with law $\mu_0$.
Let $\Xbar$ and $G$ be given as follows, for each case above:
\begin{enumerate}[(i)]
    \item Let $\Xbar$ be as in Construction \ref{constr:regulartree_local}, with $(\Xbar_v(0))_{v=0}^{\kappa}$ i.i.d.\ with law $\mu_0$.
    \item Let $\Xbar$ be as in Construction \ref{constr:UGWlocal}, with $\rho_{\myroot}=\mathrm{Poisson}(\lambda)$ and with $(\Xbar_v(0))_{v\in\V_1}$ i.i.d.\ with law $\mu_0$.
    \item Let $\Xbar$ be as in Construction \ref{constr:UGWlocal}, with $(\Xbar_v(0))_{v\in\V_1}$ i.i.d.\ with law $\mu_0$.
\end{enumerate}
Then the $\PolishX^\infty$-valued random trajectory 
$X^{G_n}_{\myroot_n}$ converges in law  to $\Xbar_{\myroot}$ (or $\Xbar_0$ in case (i)). Moreover, 
the $\P(\PolishX^\infty)$-valued random measure  
$\mu^{G_n}$ converges in law  to $\Lmc(\Xbar_{\myroot})$ (or  $\Lmc(\Xbar_0)$ in case (i)).
\end{Theorem}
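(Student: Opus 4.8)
The plan is to reduce the statement to two inputs: the local convergence of the three random graph models to the appropriate limiting trees, and the continuity of the particle dynamics with respect to local convergence established in the companion paper \cite{LacRamWu-convergence}. The three local-field equation theorems then serve only to \emph{identify} the limit. Throughout, root $G_n$ at a uniformly random vertex $\myroot_n$.

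\textbf{Step 1 (local convergence of the initialized graphs).} First I would record the classical local weak convergence of $(G_n,\myroot_n)$: in case (i) to the infinite $\kappa$-regular tree; in case (ii) to $\mathrm{GW}(\mathrm{Poisson}(\lambda),\mathrm{Poisson}(\lambda))$, which one checks equals $\mathrm{UGW}(\mathrm{Poisson}(\lambda))$ via \eqref{def:rho-unimod}; and in case (iii) to $\mathrm{UGW}(\rho_{\myroot})$, the finite second-moment hypothesis being exactly what guarantees this convergence for the configuration model (see \cite{aldous-lyons}). Since the initial states are i.i.d.\ with law $\mu_0$ and independent of $G_n$, equipping the graphs with these marks upgrades this to convergence in law of the $\Gmc_*[\PolishX]$-valued variables $(G_n,X^{G_n}(0),\myroot_n)$ to $(G,X^G(0),\myroot)$, where $G$ is the limiting tree with i.i.d.\ $\mu_0$-marks. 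I would also note that each model converges locally \emph{in probability} (two independent uniform roots have asymptotically independent neighborhoods), which is what the empirical-measure claim will require.

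\textbf{Step 2 (propagate the dynamics and identify the marginal).} Next I invoke Theorem~3.2 of \cite{LacRamWu-convergence}, by which the law of $(G,X^G)$ in $\Gmc_*[\PolishX^\infty]$ depends continuously on the law of $(G,X^G(0))$ in $\Gmc_*[\PolishX]$. Applied to the convergent sequence of Step~1, this gives $(G_n,X^{G_n},\myroot_n) \to (G,X^G,\myroot)$ in law in $\Gmc_*[\PolishX^\infty]$, and reading off the root mark yields $X^{G_n}_{\myroot_n} \to X^G_{\myroot}$ in law. Finally, the i.i.d.\ initialization verifies Condition~\ref{cond:regular_tree}(4) in case (i) and Condition~\ref{cond:UGW}(1a--d) in cases (ii),(iii) — using that i.i.d.\ marks preserve unimodularity — so Theorem~\ref{thm:localequations-reg} (case (i)) and Theorem~\ref{thm:localequations-UGW} (cases (ii),(iii)) give $X^G_{\myroot} \stackrel{d}{=} \Xbar_{\myroot}$ (or $\Xbar_0$ in case (i)). This proves the first assertion.

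\textbf{Step 3 (concentration of the empirical measure).} Since $\mu^{G_n}$ is a measurable functional of $(G_n,X^{G_n})$ with mean $\E[\mu^{G_n}]=\Lmc(X^{G_n}_{\myroot_n}) \to \Lmc(\Xbar_{\myroot})$ by Step~2, and since the limit $\Lmc(\Xbar_{\myroot})$ is deterministic, convergence in law is equivalent to convergence in probability; thus it suffices to prove $\Var(\langle \mu^{G_n},\phi\rangle)\to 0$ for each bounded continuous $\phi$ on $\PolishX^\infty$. I would obtain this by a two-root decorrelation argument: the second moment of $\langle \mu^{G_n},\phi\rangle = |G_n|^{-1}\sum_u \phi(X_u)$ is an average over ordered pairs $(u,u')$ of $\E[\phi(X_u)\phi(X_{u'})]$, and local convergence in probability forces a uniform unordered pair to have asymptotically independent rooted neighborhoods, so this average converges to $(\E[\phi(\Xbar_{\myroot})])^2$; together with the first-moment convergence the variance vanishes. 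I expect this concentration step to be the main obstacle, particularly for the configuration model, where the finite second-moment assumption on $\rho_{\myroot}$ is precisely what controls neighborhood sizes and secures local convergence in probability.
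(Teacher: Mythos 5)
Your proposal is correct and takes essentially the same route as the paper, whose proof is a two-line citation combining Theorem 3.7 of \cite{LacRamWu-convergence} with Theorems \ref{thm:localequations-reg} and \ref{thm:localequations-UGW}. The only difference is that you unpack that citation into its ingredients: Theorem 3.7 of the companion paper already packages your Steps 1 and 3 (local convergence in probability of these graph models with i.i.d.\ marks, and the resulting concentration of the global empirical measure), so your two-root variance argument --- sound in outline, provided one first reduces to cylinder functions of finite time horizons so that $\phi(X_u[k])$ is genuinely a local function of the $k$-neighborhood --- re-derives a result the paper takes off the shelf.
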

\begin{proof}
Case (i) follows on combining Theorem 3.7 of \cite{LacRamWu-convergence} with Theorem \ref{thm:localequations-reg}.
Cases (ii) and (iii) follow on combining Theorem 3.7 of \cite{LacRamWu-convergence} with Theorem \ref{thm:localequations-UGW}.
\end{proof}

\begin{Remark}
    To avoid giving a full definition of local weak convergence of (marked) graphs (which can be found in \cite{LacRamWu-convergence} in Section 2.2.4, including Definitions 2.8 and 2.10 therein, and Appendix A), we did not state the most general result possible here.
    In particular, the assumption of i.i.d.\ initial states in Theorem \ref{thm:cvg-eg} can be relaxed.
    Moreover, using \cite[Theorem 3.9]{LacRamWu-convergence} instead of \cite[Theorem 3.7]{LacRamWu-convergence}, one can obtain results analogous to Theorem \ref{thm:cvg-eg} for the local empirical measure of the connected component of $G_n$ containing a uniformly randomly chosen root $\myroot_n$ (and then discarding those vertices outside of that connected component).
\end{Remark}

\section{Interacting Markov chains on deterministic graphs}
\label{sec:MC_graph}

As already mentioned, a key ingredient in the derivation of the local-field equation is a conditional independence property of the particle trajectories. 
Specifically, we show in this section that 
 the trajectories of the particle system form a \textit{second-order Markov random fields}. 

Throughout the section fix a finite or countable connected locally finite graph $G=(V,E)$, a family of $\Nspace$-valued noises $\{\xi_v(k) : v \in V, k \in \N_0\}$, and an initial state configuration $\{X_v(0) : v \in V\}$. 
Recall that $N_v(G) = \{u \in V : (u,v) \in E\}$ denotes the neighborhood of a vertex $v$.
We make the following assumptions, which are notably   much more general than those of Section \ref{se:results} because here we work with a single fixed graph.

\begin{Condition}
	\phantomsection
	\label{cond:deterministic}
	{\ }
	\begin{enumerate}
	\item The  noises $\{\xi_v(k) : v \in V, k \in \N\}$ are  mutually independent and also independent of $X_V(0) \sim \mu_0$.
	\item For each $v \in V$ and $k \in \N_0$, $F_v^k \colon \PolishX^{k+1} \times (\PolishX^{k+1})^{N_v(G)} \times \Nspace \rightarrow \PolishX$ is measurable.
	\end{enumerate}	
\end{Condition}

Consider the interacting processes defined by
\begin{equation}
	X_v(k+1) = F_v^k(X_v[k],X_{N_v(G)}[k],\xi_v(k+1)), \qquad  v \in V, \ \ k \in \N_0, \label{eq:graph}
\end{equation}
which is more general than the dynamics in \eqref{eq:regular_tree} in that it allows for heterogeneous node-dependent transitions.
Recall from \eqref{def:boundaries} the notation $\partial A$ and $\partial^2A$ for the first and second boundaries of a set $A \subset V$.
As usual, we write $X_v[k]=(X_v(i))_{i=0}^k$ and $X_v=(X_v(i))_{i=0}^\infty$ for the finite and infinite trajectories.

\begin{Theorem}[Second-order Markov random field property]
	\label{thm:cond_independence}
	Suppose Condition \ref{cond:deterministic} holds, and let $A \subset V$. Suppose
	\begin{equation}
		\label{eq:cond-independence}
		{X_A}(0) \perp {X_{V \setminus (A\cup\partial^2A)}}(0) \mid {X_{\partial^2A}}(0). 
	\end{equation}
	Then ${X_A}[k] \perp {X_{V \setminus (A\cup\partial^2A)}}[k] \: | \: {X_{\partial^2A}}[k]$ for each $k \in \N_0$. 
\end{Theorem}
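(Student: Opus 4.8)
The plan is to induct on $k$. The base case $k=0$ is exactly the hypothesis \eqref{eq:cond-independence}, so I assume the asserted conditional independence at level $k$ and propagate it to $k+1$. Throughout write $R := V \setminus (A \cup \partial^2 A)$ for the ``far'' region and $S := \partial^2 A$ for the separator. The crucial structural point, which explains why the \emph{second} boundary (rather than the first) is the right object, is to split the separator into its two distance layers
$$ S_1 := \partial A = \{v : d(v,A)=1\}, \qquad S_2 := \partial(A \cup \partial A) = \{v : d(v,A) = 2\}, $$
so that $V = A \sqcup S_1 \sqcup S_2 \sqcup R$ is partitioned by graph distance to $A$ (with $A = \{d=0\}$ and $R = \{d \ge 3\}$).

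First I would record the locality of the update map \eqref{eq:graph} against this partition, which is immediate from the triangle inequality $|d(u,A) - d(v,A)| \le 1$ for neighbors $u \sim v$: the neighborhood $N_v(G)$ of any $v \in A$ lies in $A \cup S_1$; of any $v \in S_1$ lies in $A \cup S$ (never meeting $R$); of any $v \in S_2$ lies in $S \cup R$ (never meeting $A$); and of any $v \in R$ lies in $R \cup S_2$. Consequently $X_{A \cup S_1}(k+1)$ is a measurable function of $\big(X_A[k], X_S[k], \xi_{A \cup S_1}(k+1)\big)$ alone, while $X_{R \cup S_2}(k+1)$ is a measurable function of $\big(X_R[k], X_S[k], \xi_{R \cup S_2}(k+1)\big)$ alone. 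Note the noises split exactly, since $A \cup S_1$ and $R \cup S_2$ partition $V$.

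Next I would assemble the conditional independence. Condition on $X_S[k]$. By the inductive hypothesis $X_A[k] \perp X_R[k] \mid X_S[k]$, and since the noises $(\xi_v(k+1))_{v \in V}$ are mutually independent and independent of $X_V[k]$, the four blocks $X_A[k]$, $X_R[k]$, $\xi_{A\cup S_1}(k+1)$, $\xi_{R\cup S_2}(k+1)$ are mutually conditionally independent given $X_S[k]$. Grouping together with the locality above then yields
$$ \big(X_A[k+1], X_{S_1}(k+1)\big) \perp \big(X_R[k+1], X_{S_2}(k+1)\big) \ \big|\ X_S[k], $$
because the left bundle is a function of $\big(X_A[k], \xi_{A\cup S_1}(k+1)\big)$ (together with the frozen $X_S[k]$) and the right bundle a function of $\big(X_R[k], \xi_{R\cup S_2}(k+1)\big)$. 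Finally, since $X_S[k+1] = \big(X_S[k], X_{S_1}(k+1), X_{S_2}(k+1)\big)$, I would move $X_{S_1}(k+1)$ and $X_{S_2}(k+1)$ into the conditioning using the standard semi-graphoid ``weak union'' rule, in the form $(U_1,W_1) \perp (U_2,W_2)\mid Z \Rightarrow U_1 \perp U_2 \mid (Z,W_1,W_2)$ (obtained by applying weak union and symmetry twice). This gives $X_A[k+1] \perp X_R[k+1] \mid X_S[k+1]$, completing the induction.

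The conceptual content is entirely in the first two paragraphs — the distance-layer decomposition and the observation that updates on the $A$-side of the separator never read $R$ and vice versa; everything after is bookkeeping with conditional independence. The main thing to be careful about is the rigor of these manipulations in the general Polish setting: I would rely only on the semi-graphoid axioms (symmetry, decomposition, weak union, contraction), which hold for arbitrary probability measures with no positivity or density assumption, and avoid the intersection axiom. A secondary point is that $V$ may be countably infinite, so $X_R[k]$ is an infinite family; this causes no difficulty, since conditional independence is a statement about the relevant $\sigma$-algebras (equivalently their finite-dimensional marginals) and each individual update remains a function of finitely many local coordinates.
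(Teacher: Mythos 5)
Your proof is correct and follows essentially the same route as the paper's: induction on $k$, augmenting $X_A[k]$ and $X_R[k]$ with the corresponding noise blocks to get conditional independence given $X_{\partial^2A}[k]$, exploiting the locality of the updates with respect to the distance layers $\partial A$ and $\partial(A\cup\partial A)$, and then transferring the time-$(k+1)$ boundary values into the conditioning. Your ``weak union'' and grouping steps are precisely the content of the paper's Lemmas \ref{lem:cond_indep_fact_1} and \ref{le:conditionalindependence1}, applied in a slightly different order.
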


This result relies on the following two lemmas, which are also used in Section \ref{subs:props-part}, and whose proofs are deferred to Appendix \ref{se:appA}. 

\begin{Lemma}
	\label{lem:cond_indep_fact_1}
	Suppose $X$, $Y$, and $Z$ are random variables with values in some Polish spaces, and assume $X \perp Y \,\, | \,\, Z$.
	Then $(X,\phi(X,Z)) \perp (Y,\psi(Y,Z)) \mid Z$ for measurable functions $\phi$ and $\psi$.
\end{Lemma}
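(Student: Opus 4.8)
The plan is to treat this as a general measure-theoretic fact about conditional independence and prove it via regular conditional distributions, which exist because all spaces involved are Polish. I would let $\mu_X(\cdot\mid z)$ and $\mu_Y(\cdot\mid z)$ denote versions of $\Lmc(X\mid Z=z)$ and $\Lmc(Y\mid Z=z)$, and recall that the hypothesis $X\perp Y\mid Z$ is equivalent to the factorization
\[
\Lmc\big((X,Y)\mid Z=z\big)=\mu_X(\cdot\mid z)\otimes\mu_Y(\cdot\mid z)\qquad\text{for } \Lmc(Z)\text{-a.e.\ } z.
\]

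The key observation I would exploit is that once we condition on $\{Z=z\}$ the value $z$ is frozen, so that $\phi(X,Z)$ becomes $\phi(X,z)$, a measurable function of $X$ alone, and likewise $\psi(Y,Z)=\psi(Y,z)$ is a function of $Y$ alone. Concretely, for fixed $z$ I would introduce the measurable maps $\phi_z(x):=(x,\phi(x,z))$ and $\psi_z(y):=(y,\psi(y,z))$ and the product map $\Phi_z:=\phi_z\times\psi_z$, and then verify the disintegration identity
\[
\Lmc\Big(\big((X,\phi(X,Z)),(Y,\psi(Y,Z))\big)\,\big|\,Z=z\Big)=(\Phi_z)_{*}\,\Lmc\big((X,Y)\mid Z=z\big),
\]
valid for a.e.\ $z$, where $(\Phi_z)_*$ denotes pushforward. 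This identity is established first for indicators of measurable rectangles and then extended by a monotone class argument.

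Combining the two displays and using that the pushforward of a product measure under a product map factors, I would obtain
\[
\Lmc\Big(\big((X,\phi(X,Z)),(Y,\psi(Y,Z))\big)\,\big|\,Z=z\Big)
=(\phi_z)_{*}\mu_X(\cdot\mid z)\ \otimes\ (\psi_z)_{*}\mu_Y(\cdot\mid z).
\]
Since the two factors on the right are exactly the conditional laws of $(X,\phi(X,Z))$ and of $(Y,\psi(Y,Z))$ given $Z=z$, this product factorization is precisely the assertion $(X,\phi(X,Z))\perp(Y,\psi(Y,Z))\mid Z$.

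There is no real obstacle here—the content is bookkeeping—and the only point needing care is the middle disintegration identity, namely the legitimacy of replacing $\phi(X,Z)$ by $\phi(X,z)$ under the conditional law; this is where the Polish hypothesis is used, to guarantee that regular conditional distributions exist. If one prefers to avoid explicit disintegration, an equivalent route is to verify directly that $\E[F(X,\phi(X,Z))\,G(Y,\psi(Y,Z))\mid Z]=\E[F(X,\phi(X,Z))\mid Z]\,\E[G(Y,\psi(Y,Z))\mid Z]$ for all bounded measurable $F,G$, by approximating $F$ and $G$ by finite linear combinations of products of indicators and invoking the test-function characterization of $X\perp Y\mid Z$.
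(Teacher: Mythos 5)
Your argument is correct, but it takes a genuinely different route from the paper's. The paper works entirely with test functions: it reduces the claim to the identity $\Emb[f(X,\phi(X,Z))\,g(Y,\psi(Y,Z))\,h(Z)]=\Emb[\Emb[f(X,\phi(X,Z))\mid Z]\,g(Y,\psi(Y,Z))\,h(Z)]$, observes that by a monotone class argument it suffices to treat $f(X,\phi(X,Z))$ of the product form $\ftil_1(X)\ftil_2(Z)$, and then invokes the characterization of $X\perp Y\mid Z$ as $\Emb[\ftil_1(X)\mid Z]=\Emb[\ftil_1(X)\mid Y,Z]$ to swap conditioning sets and close the loop with the tower property. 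You instead pass to regular conditional distributions, use the factorization $\Lmc((X,Y)\mid Z=z)=\mu_X(\cdot\mid z)\otimes\mu_Y(\cdot\mid z)$, and push forward under the frozen product map $\Phi_z=\phi_z\times\psi_z$; the only delicate step, as you correctly identify, is the disintegration identity justifying the substitution $Z=z$ inside $\phi$ and $\psi$, which follows from the standard disintegration formula applied to the jointly measurable function $(x,y,z)\mapsto F(\phi_z(x),\psi_z(y))$ together with measurability of $z\mapsto(\Phi_z)_*\Lmc((X,Y)\mid Z=z)$ as a kernel. Your approach makes the ``freezing'' of $Z$ explicit and geometric (a product map pushes a product measure to a product measure), at the cost of invoking existence of regular conditional distributions; the paper's approach is more elementary in that it never disintegrates, needing only the conditional-expectation characterization of conditional independence, which is why it ports more easily to the companion Lemma \ref{le:conditionalindependence1}. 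The alternative route you sketch in your final sentences is essentially the paper's proof.
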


\begin{Lemma} \label{le:conditionalindependence1}
	Suppose $X$, $Y$, and $Z$ are random variables with values in some Polish spaces, and assume $X \perp Y \,\, | \,\, Z$.
	Then for measurable $\phi$ and $\psi$, we have $X \perp Y \,\, | \,\, (Z,\phi(X,Z),\psi(Y,Z))$. Moreover, we have
	\begin{align*}
		\Lmc(X \, | \, Z,\phi(X,Z),\psi(Y,Z)) &= \Lmc(X \, | \, Z,\phi(X,Z)), \\
		\Lmc(Y \, | \, Z,\phi(X,Z),\psi(Y,Z)) &= \Lmc(Y \, | \, Z,\psi(Y,Z)).
	\end{align*}
\end{Lemma}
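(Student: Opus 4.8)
The plan is to reduce all three assertions to a single application of Lemma \ref{lem:cond_indep_fact_1}, followed by elementary manipulations of conditional independence. Write $U := \phi(X,Z)$ and $W := \psi(Y,Z)$ for brevity, so that $U$ is $\sigma(X,Z)$-measurable and $W$ is $\sigma(Y,Z)$-measurable. Since $X \perp Y \mid Z$ by hypothesis, Lemma \ref{lem:cond_indep_fact_1} immediately yields the enriched conditional independence
\[
(X,U) \perp (Y,W) \mid Z.
\]
Every conclusion of the lemma will be extracted from this one relation, using only the decomposition property ($A \perp (B,C)\mid Z \Rightarrow A \perp B \mid Z$), the weak union property ($A \perp (B,C)\mid Z \Rightarrow A \perp B \mid (Z,C)$), and symmetry of conditional independence.

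For the first claim, $X \perp Y \mid (Z,U,W)$, I would apply weak union twice. Starting from $(X,U) \perp (Y,W) \mid Z$, weak union (absorbing $W$ into the conditioning) gives $(X,U) \perp Y \mid (Z,W)$; then, after using symmetry to write this as $Y \perp (X,U) \mid (Z,W)$, a second application of weak union (absorbing $U$) gives $Y \perp X \mid (Z,W,U)$, which is the desired statement after reordering the conditioning variables and using symmetry once more.

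For the two conditional-law identities, I would use that for random variables valued in Polish spaces the equality $\Lmc(X \mid Z,U,W) = \Lmc(X \mid Z,U)$ is equivalent to the conditional independence $X \perp W \mid (Z,U)$, and symmetrically $\Lmc(Y \mid Z,U,W) = \Lmc(Y \mid Z,W)$ is equivalent to $Y \perp U \mid (Z,W)$. To obtain the former, apply decomposition to $(X,U) \perp (Y,W) \mid Z$ to drop $Y$, giving $(X,U) \perp W \mid Z$, and then weak union (absorbing $U$, after symmetry) to get $X \perp W \mid (Z,U)$. The identity for $Y$ is mirror-image: decomposition drops $X$ to give $U \perp (Y,W) \mid Z$, and weak union absorbing $W$ yields $Y \perp U \mid (Z,W)$.

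The one point requiring care—and the part I would verify explicitly rather than merely cite—is that decomposition, weak union, and the equivalence between the conditional-law equalities and conditional independence all hold in the present generality of arbitrary Polish (standard Borel) spaces, where one cannot appeal to densities. Each follows from the defining property of conditional independence via the tower property and the existence of regular conditional distributions. For instance, weak union follows by noting that $A \perp (B,C)\mid Z$ gives $\E[f(A)\mid Z,B,C]=\E[f(A)\mid Z]$, which is $\sigma(Z)$-measurable, whence $\E[f(A)g(B)\mid Z,C] = \E[g(B)\,\E[f(A)\mid Z]\mid Z,C] = \E[f(A)\mid Z,C]\,\E[g(B)\mid Z,C]$, using $\E[f(A)\mid Z]=\E[f(A)\mid Z,C]$ by the tower property. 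I expect this bookkeeping to be the main (though routine) obstacle, and it is presumably precisely what the deferred proof in Appendix \ref{se:appA} carries out.
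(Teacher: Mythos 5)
Your proof is correct, and it shares the paper's essential ingredients---Lemma \ref{lem:cond_indep_fact_1}, a weak-union property proved via the tower property, and the equivalence between the conditional-law identities and conditional independence---but the assembly is organized differently. The paper never invokes the graphoid axioms abstractly: it proves a tailored intermediate result (Lemma \ref{lem:cond_indep_fact_2}), namely that $X \perp Y \mid Z$ implies $X \perp Y \mid (Z,\psi(Y,Z))$, by exactly the tower-property/approximation computation you sketch for weak union, and then bootstraps to $X \perp Y \mid (Z,\psi(Y,Z),\phi(X,Z))$ by re-parametrizing $\tilde{Z}:=(Z,\psi(Y,Z))$; for the displayed identities it sets $\psi$ constant to get $X \perp Y \mid (Z,\phi(X,Z))$ and then applies Lemma \ref{lem:cond_indep_fact_1} a \emph{second} time at the level of $\tilde{Z}=(Z,\phi(X,Z))$, writing $\psi(Y,Z)$ as $\tilde{\psi}(Y,\tilde{Z})$. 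You instead apply Lemma \ref{lem:cond_indep_fact_1} exactly once, obtaining $(X,\phi(X,Z)) \perp (Y,\psi(Y,Z)) \mid Z$, and then extract all three conclusions by symmetry, decomposition, and weak union. The two routes carry the same measure-theoretic content (your weak-union verification is, in substance, the proof of Lemma \ref{lem:cond_indep_fact_2}), but yours is more modular---one enrichment step plus standard axioms---whereas the paper fuses enrichment and weak union into the intermediate lemma and pays for it with the re-parametrization trick and a second use of Lemma \ref{lem:cond_indep_fact_1}.
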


We now apply these lemmas to prove Theorem \ref{thm:cond_independence}, 
and also to show that 
\begin{align}
	\Lmc&\left({X_A}[k],\xi_{A \cup \partial A}(k+1) \, | \, {X_{\partial^2A}}[k+1] = x_{\partial^2 A}[k+1]\right) \nonumber \\
	&= \Lmc\left({X_A}[k],\xi_{A \cup \partial A}(k+1) \, | \, X_{\partial A}(k+1) = x_{\partial A}(k+1),{X_{\partial^2 A}}[k] = x_{\partial^2 A}[k]\right), \label{pf:main1}
\end{align}
	for $\Lmc(X_{\partial^2A}[k+1])$-almost every $x_{\partial^2 A}[k+1] \in (\PolishX^{k+2})^{\partial^2 A}$, which will be used in 
	the proof of Theorem \ref{thm:cond_consistency}.

\begin{proof}[Proofs of Theorem \ref{thm:cond_independence} and relation \eqref{pf:main1}]
    Denote by $d(\cdot,\cdot)$ the graph distance on $G$.
	Let $C := V \setminus (A\cup\partial^2A) = \{v \in V : d(v,A) \ge 3\}$, and note that $\partial C = \{v \in V : d(v,A) = 2\} = \partial^2A \setminus \partial A$. 
	The claim is true for $k=0$ by \eqref{eq:cond-independence}, so we assume it holds for some $k \in \N_0$ and proceed by induction. 
	Since $\{\xi_v(k+1) : v \in V\}$ are independent of each other and of $\{X_v[k] : v \in V\}$, we have
	\begin{equation}
    	\label{eq-cond}
		({X_A}[k],\xi_{A \cup \partial A}(k+1)) \perp ({X_C}[k],\xi_{C \cup \partial C}(k+1)) \ | \ {X_{\partial^2A}}[k].
	\end{equation}
    Denoting $X := ({X_A}[k],\xi_{A \cup \partial A}(k+1))$, $Y := ({X_C}[k],\xi_{C \cup \partial C}(k+1))$ and $Z := X_{\partial^2A}[k]$. 
	 the form  \eqref{eq:graph} of the dynamics implies that  
     there exist measurable functions $\phi$ and $\psi$ such that $X_{\partial A}(k+1) = \phi(X,Z)$ and $X_{\partial C} (k+1)= \psi(X,Z)$. Since 
     $\partial^2 A = \partial A \cup \partial C$, 
     $X_{\partial^2 A}[k+1] = (Z, \phi(X,Z), \psi(Y,Z))$. Since \eqref{eq-cond} states that  $X \perp Y|Z$, the first assertion of Lemma \ref{le:conditionalindependence1} ensures that 
	\begin{align*}
	({X_A}[k],\xi_{A \cup \partial A}(k+1)) \perp ({X_C}[k],\xi_{C \cup \partial C}(k+1)) \: | \: {X_{\partial^2A}}[k+1]. 
	\end{align*}
    Also since $\partial^2 A = \partial A \cup \partial C$, relation \eqref{pf:main1} follows from the first display of Lemma \ref{le:conditionalindependence1}. 
    Continuing with the proof of Theorem \ref{thm:cond_independence}, note 
 that $X_{A}(k+1)$ is measurable with respect to $(X_A[k],\xi_A(k+1),X_{\partial A}[k])$ and hence, $(X,Z)$ and likewise,  $X_C(k+1)$ is measurable with respect to $(X_C[k],\xi_C(k+1),X_{\partial C}[k])$ and hence $(Y,Z).$
 The last display and Lemma \ref{lem:cond_indep_fact_1} 
  then imply that 
	\begin{equation*}
		{X_A}[k+1] \perp {X_{C}}[k+1] \: | \: {X_{\partial^2A}}[k+1].
	\end{equation*}
    Theorem \ref{thm:cond_independence} then follows by induction. 
\end{proof}

\begin{Corollary}
	\label{cor:cond_independence}
Under the assumptions of Theorem \ref{thm:cond_independence}, the same conclusion holds with $k = \infty$, that is,  ${X_A} \perp {X_{V \setminus (A\cup\partial^2A)}} \: | \: {X_{\partial^2A}}$.
\end{Corollary}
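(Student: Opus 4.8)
The plan is to upgrade the finite-horizon conditional independence of Theorem~\ref{thm:cond_independence} to the infinite horizon by a martingale-convergence argument, the key difficulty being that \emph{both} the conditioned variables and the conditioning $\sigma$-algebra grow with $k$. Write $C := V \setminus (A \cup \partial^2 A)$ and set $\mathcal{F}_k := \sigma(X_{\partial^2 A}[k])$, so that $\mathcal{F}_k \uparrow \mathcal{F}_\infty := \sigma(X_{\partial^2 A})$. Recall that the assertion $X_A \perp X_C \mid X_{\partial^2 A}$ is equivalent to the factorization
\[
\E[FG \mid \mathcal{F}_\infty] = \E[F \mid \mathcal{F}_\infty]\,\E[G \mid \mathcal{F}_\infty] \quad \text{a.s.}
\]
holding for all bounded $\sigma(X_A)$-measurable $F$ and all bounded $\sigma(X_C)$-measurable $G$.

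First I would establish this factorization for the generating functions $F = f(X_A[m])$ and $G = g(X_C[m])$, where $m \in \N_0$ and $f,g$ are bounded and measurable. For any $k \ge m$, the variables $F$ and $G$ are measurable with respect to $X_A[k]$ and $X_C[k]$ respectively, so Theorem~\ref{thm:cond_independence} applied at time $k$ yields
\[
\E[FG \mid \mathcal{F}_k] = \E[F \mid \mathcal{F}_k]\,\E[G \mid \mathcal{F}_k] \quad \text{a.s.}
\]
Letting $k \to \infty$ and invoking L\'evy's upward martingale convergence theorem, each of the three conditional expectations converges a.s.\ (and in $L^1$) to its counterpart given $\mathcal{F}_\infty$; since $f$ and $g$ are bounded, the two conditional expectations on the right are uniformly bounded, so their product converges to the product of the limits, giving the factorization given $\mathcal{F}_\infty$ for these generators.

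It then remains to extend the factorization from the generators to all bounded measurable $F$ and $G$, which I would do with two applications of the functional monotone class theorem. The families $\mathcal{M}_A := \{f(X_A[m]) : m \in \N_0,\ f \text{ bounded measurable}\}$ and $\mathcal{M}_C := \{g(X_C[m]) : m \in \N_0,\ g\text{ bounded measurable}\}$ are each closed under multiplication (a product of two such functions is again a function of the later truncation) and generate $\sigma(X_A)$ and $\sigma(X_C)$ respectively. Fixing $G \in \mathcal{M}_C$, the collection of bounded $F$ for which the factorization holds is a vector space containing the constants and $\mathcal{M}_A$, and it is closed under uniformly bounded monotone limits by conditional monotone and dominated convergence; hence it contains all bounded $\sigma(X_A)$-measurable functions. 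Fixing now an arbitrary such $F$ and repeating the argument in the second variable extends the factorization to all bounded $\sigma(X_C)$-measurable $G$, which is exactly the claim. The statement for $k=\infty$ then follows.

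The main obstacle is conceptual rather than computational: because the finite-horizon statement conditions on the truncation $X_{\partial^2 A}[k]$ while the target conditions on the full trajectory $X_{\partial^2 A}$, one cannot merely union or intersect the finite-$k$ statements. The upward martingale convergence theorem is precisely the tool that accommodates the growing conditioning $\sigma$-algebra $\mathcal{F}_k \uparrow \mathcal{F}_\infty$, and the boundedness of $f$ and $g$ is what allows the a.s.\ limit to pass through the product on the right-hand side.
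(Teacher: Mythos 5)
Your proof is correct and follows essentially the same route as the paper: apply the finite-horizon factorization of Theorem \ref{thm:cond_independence} and pass to the limit in the growing conditioning $\sigma$-algebra via L\'evy's upward martingale convergence theorem. The only (minor) difference is in the final approximation step, where the paper tests against bounded continuous functions of the full trajectories and takes a double limit, while you work with cylinder functions and close the argument with a functional monotone class theorem; both are standard and equally valid.
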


\begin{proof}
	Let $f,g,h$ be bounded continuous functions on $\PolishX^\infty$. Let $\Gmc_m := \sigma( X_{\partial^2A}[m])$ for each $m \in \N_0$, and $\Gmc_\infty := \sigma(\cup_m\Gmc_m) = \sigma(X_{\partial^2A})$.
	Then, using Theorem \ref{thm:cond_independence} followed by martingale convergence, we find 
	\begin{align*}
		\Emb & \left[ f(X_A) g(X_{\partial^2A}) h(X_{V \setminus (A\cup\partial^2A)}) \right] \\
		& = \lim_{k \to \infty} \lim_{m \to \infty} \Emb \left[ f(X_A[k]) g(X_{\partial^2A}[m]) h(X_{V \setminus (A\cup\partial^2A)}[k]) \right] \\
		& = \lim_{k \to \infty} \lim_{m \to \infty} \Emb \left[ \Emb \left[ f(X_A[k]) \mid \Gmc_m \right] g(X_{\partial^2A}[m]) \Emb \left[ h(X_{V \setminus (A\cup\partial^2A)}[k]) \mid \Gmc_m \right] \right] \\
		& = \lim_{k \to \infty} \Emb \left[ \Emb \left[ f(X_A[k]) \mid \Gmc_\infty \right] g(X_{\partial^2A}) \Emb \left[ h(X_{V \setminus (A\cup\partial^2A)}[k]) \mid \Gmc_\infty \right] \right] \\
		& = \Emb \left[ \Emb \left[ f(X_A) \mid \Gmc_\infty \right] g(X_{\partial^2A}) \Emb \left[ h(X_{V \setminus (A\cup\partial^2A)}) \mid \Gmc_\infty \right] \right].
	\end{align*}
\end{proof}

We say that $(X_v(0))_{v \in V}$ is a \emph{second-order Markov random field} if the conditional independence \eqref{eq:cond-independence} holds for every set $A \subset V$. The content of Theorem \ref{thm:cond_independence} and Corollary \ref{cor:cond_independence} is that (possibly non-Markovian) dynamics of the form \eqref{eq:graph} propagate the second-order Markov property on trajectories over time. 
Interestingly, for particle systems \eqref{eq:graph}  with a finite state space  $\PolishX$ 
and Markov dynamics that are reversible with respect to a stationary distribution, Dawson showed in \cite[Proposition 4.1]{dawson1973information} that  the stationary distribution  must be a second-order Markov random field. 
The paper \cite{goldstein1989pca} took an alternative perspective, studying for the lattice $G=\Z^d$ the problem of understanding the space-time random field $(X_v(k))_{(v,k) \in \Z^{d+1}}$ as a Gibbs measure on $\Z^{d+1}$.
See also \cite{LacRamWu-MRF,GangulyRamanan2022interacting} and references therein for developments analogous to Theorem \ref{thm:cond_independence} for systems of interacting diffusions and continuous-time pure jump processes.

\subsection{Proof of Theorem \ref{thm:localequations-reg}} \label{se:regtreeprf}

We will apply Theorem \ref{thm:cond_independence} to prove Theorem \ref{thm:localequations-reg}. Specifically, we will use sets of the form $A=T_1$, where $T_1$ and $T_2$ are disjoint subtrees separated by an edge $(u,v)$ as in Condition \ref{cond:regular_tree}(4a), so that $T_2=V\setminus(A\cup\partial^2A)$ and $\partial^2A=\{u,v\}$.

\begin{proof}[Proof of Theorem \ref{thm:localequations-reg}] 
At time $k=0$ we have $(\Xbar_v(0))_{v=0}^\kappa \stackrel{d}{=} (X_v(0))_{v=0}^\kappa$, by assumption. Let us assume now that $(\Xbar_v[k])_{v=0}^\kappa \stackrel{d}{=} (X_v[k])_{v=0}^\kappa$ for some $k \in \N_0$ and proceed by induction.

In line with the canonical tree labeling of Section \ref{subs:ulam-harris-neveu}, let us write $\{vj : j=1,\ldots,\kappa-1\}$ for the $\kappa-1$ neighbors of a vertex $v \in \{1,\ldots,\kappa\}$ other than the "root" neighbor $0$. Let $\Xbd[k] := (X_v[k])_{v=0}^\kappa$ and $\bar{\Xbd}[k] := (\Xbar_v[k])_{v=0}^\kappa$.
To show $\bar{\Xbd}[k+1] \stackrel{d}{=} \Xbd[k+1]$, from the evolution of $\Xbar_v$ and $X_v$ in Construction \ref{constr:regulartree_local}  and \eqref{eq:graph}, respectively, we see that it suffices to show that 
	\begin{align*}
		& \Lmc(\bar{\Xbd}[k], \Zbar^k_{ij} : v \in \{1,\dotsc,\kappa\}, j \in \{1,\dotsc,\kappa-1\}) \\
		& \quad = \Lmc(\Xbd[k], X_{ij}[k] : v \in \{1,\dotsc,\kappa\}, j \in \{1,\dotsc,\kappa-1\}), 
	\end{align*}
	which is equivalent to showing  that for bounded measurable functions $f_0 \colon (\PolishX^{k+1})^{\kappa+1} \to \R$ and $f_1,\ldots,f_\kappa \colon (\PolishX^{k+1})^{\kappa-1} \to \R$ we have
		\begin{equation*}
		\Emb \left[ f_0(\bar{\Xbd}[k]) \prod_{v=1}^\kappa f_v((\Zbar^k_{vj})_{j=1}^{\kappa-1}) \right] = \Emb \left[ f_0(\Xbd[k]) \prod_{v=1}^\kappa f_v((X_{vj}[k])_{j=1}^{\kappa-1}) \right].
	\end{equation*}
	To this end,  recalling that $(\Zbar^k_{vj})_{j=1}^{\kappa-1} \sim \bar{\cmeas}[k](\cdot\, | \Xbar_v[k], \Xbar_0[k])$, where 
\[ 
		\bar{\cmeas}[k](\cdot \,|\, x_0,x_1) := \Lmc\big((\Xbar_v[k])_{v=2}^{\kappa} \,|\, \Xbar_0[k] = x_0, \, \Xbar_1[k]=x_1 \big).
		\] 
    for $x_0, x_1 \in {\mathcal X}^{1+\kappa}$, define an analogous  kernel $\cmeas[k]$ by
	\begin{equation*}
	\cmeas[k](\cdot \,|\, x_0,x_1) := \Lmc\big((X_{1+j}[k])_{j=1}^{\kappa-1} \, | \, X_0[k]=x_0, \, X_1[k]=x_1 \big).
	\end{equation*}
	Due to the form of the dynamics  \eqref{eq:regular_tree} and the symmetry assumptions stated as Conditions \ref{cond:regular_tree}(2,3,4b), it is straightforward to check that the automorphism invariance of Condition \ref{cond:regular_tree}(4b) propagates through time; that is, $(X_{\varphi(v)}[k])_{v \in V} \stackrel{d}{=} (X_v[k])_{v \in V}$ for automorphisms $\varphi$ of $G$. It follows that for each $v=1,\ldots,\kappa$,
	\[
	\big((X_{vj})_{j=1}^{\kappa-1},X_v,X_0\big) \stackrel{d}{=} \big((X_{1+j})_{j=1}^{\kappa-1},X_0,X_1\big), 
	\]
which in turn implies 
	\begin{equation*}
	\cmeas[k](\cdot \,|\, X_v[k],X_0[k]) = \Lmc\big((X_{vj}[k])_{j=1}^{\kappa-1} \,|\, X_v[k],X_0[k]\big).
	\end{equation*}
	Next, use the conditional independence established in Theorem \ref{thm:cond_independence} to deduce that
	\begin{equation*}
	\cmeas[k](\cdot \,|\, X_v[k],X_0[k]) = \Lmc\big((X_{vj}[k])_{j=1}^{\kappa-1} \,|\, \Xbd[k]\big).
	\end{equation*}
	In particular, for Borel sets $B_1,\ldots,B_\kappa \subset (\PolishX^{k+1})^{\kappa-1}$, we have
\begin{equation}
	\PP\big((X_{vj}[k])_{j=1}^{\kappa-1} \in B_v, \ v=1,\ldots,\kappa \,\big|\, \Xbd[k]\big) = \prod_{v=1}^{\kappa}\cmeas[k](B_v \,|\, X_v[k], X_0[k]). \label{def:update-reg-2}
\end{equation}
Use \eqref{def:update-reg}, the induction hypothesis (which entails both $\Xbd[k] \stackrel{d}{=} \bar{\Xbd}[k]$ and $\cmeas[k] = \bar{\cmeas}[k]$),  then \eqref{def:update-reg-2} and finally the tower property to obtain  
	\begin{align*}
		 \Emb \left[ f_0(\bar{\Xbd}[k]) \prod_{v=1}^\kappa f_v((\Zbar^k_{vj})_{j=1}^{\kappa-1}) \right]  &= \Emb \left[ f_0(\bar{\Xbd}[k]) \Emb \left[  \prod_{v=1}^\kappa f_v((\Zbar^k_{vj})_{j=1}^{\kappa-1} \,\Big|\, \bar{\Xbd}[k] \right] \right] \\
		& = \Emb \left[ f_0(\bar{\Xbd}[k]) \prod_{v=1}^\kappa \int f_v \, d\bar{\cmeas}[k]( \cdot \,|\, \Xbar_v[k],\Xbar_0[k]) \right] \\
		& = \Emb \left[ f_0(\Xbd[k]) \prod_{v=1}^\kappa \int f_v \, d\cmeas[k]( \cdot \,|\, X_v[k],X_0[k])) \right] \\
		& = \Emb \left[ f_0(\Xbd[k]) \Emb \left[\prod_{v=1}^\kappa f_v((X_{vj}[k])_{j=1}^{\kappa-1}) \,\Big|\, \Xbd[k] \right] \right] \\
		& = \Emb \left[ f_0(\Xbd[k]) \prod_{v=1}^\kappa  f_v((X_{vj}[k])_{j=1}^{\kappa-1}) \right].
	\end{align*}
	This completes the proof by induction.
\end{proof}

\begin{Remark}
It is also possible (though much less elementary) to derive Theorem \ref{thm:localequations-reg} as a special case of Theorem \ref{thm:localequations-UGW}. Indeed, if $\rho_{\myroot}=\delta_{\kappa}$, then $\tree \sim \mathrm{UGW}(\rho_{\myroot})$ is precisely the $\kappa$-regular tree. One can check that Conditions \ref{cond:regular_tree} and \ref{cond:UGW} are equivalent in this case (up to extending the domain of $F^k$)  and also that Construction \ref{constr:UGWlocal} reduces to Construction \ref{constr:regulartree_local}. Indeed, the only point worth mentioning here is that  Condition \ref{cond:UGW}(1d) follows from Condition \ref{cond:regular_tree}(4b) because, for the non-random $\kappa$-regular tree $\tree=(V,E)$, we have that $(\tree,X_V(0))$ is unimodular if and only if $X_V(0)$ is invariant under automorphisms of $\tree$, by \cite[Theorem 3.2]{aldous-lyons}.
\end{Remark}

\subsection{Counterexamples} \label{se:counterexamples}

The conditional independence result in Theorem \ref{thm:cond_independence} is the best available in a sense.
To see this, we provide two simple counterexamples showing that in general conditional independence fails if one conditions only on
\begin{enumerate}
\item the current state of $\partial^2 A$ instead of the past of $\partial^2 A$ (Example \ref{eg:counter_current_of_two}), or
\item the past of $\partial A$ instead of $\partial^2 A$ (Example \ref{eg:counter_past_of_one}).
\end{enumerate}
In both examples, the graph is the line $V=\{1,\ldots,n\}$ with $E=\{(i,i+1) : i=1,\ldots,n-1\}$ for some $n \in \N$, and we check the conditional independence of Theorem \ref{thm:cond_independence} with the set $A=\{1\}$. We also work with state space $\X=\Rmb$ and standard Gaussian noises $\xi_v(k) \sim \mathcal{N}(0,1)$. We write $\Xbd(k)$ for the column vector $(X_1(k),\ldots,X_n(k))^\top$.

\begin{Example}
	\label{eg:counter_current_of_two}
	Consider $n=4$ in the above setup, with dynamics
	\begin{equation*}
		\Xbd(k+1) = B \Xbd(k) + \xibd(k), \quad \Xbd(0) = 0, \quad 
		B = \begin{pmatrix}
			1 & 1 & 0 & 0 \\
			1 & 1 & 1 & 0 \\
			0 & 1 & 1 & 1 \\
			0 & 0 & 1 & 1 \end{pmatrix}.
	\end{equation*}
	Then $\Xbd(2)=B\xibd(1)+\xibd(2)$ is centered Gaussian with covariance matrix
	\begin{equation*}
	B^2+I = \begin{pmatrix}
			3 & 2 & 1 & 0 \\
			2 & 4 & 2 & 1 \\
			1 & 2 & 4 & 2 \\
			0 & 1 & 2 & 3
	\end{pmatrix},
	\end{equation*}
	and we easily calculate $\mathrm{Cov}(X_1(2),X_4(2) \, | \,X_2(2),X_3(2)) = -1/2$.
Thus $X_1(2)$ is not independent of $X_4(2)$ given $(X_2(2),X_3(2))$.
	\qed
\end{Example}

\begin{Example}
	\label{eg:counter_past_of_one}
	Consider $n=3$ in the above setup, with dynamics
	\begin{equation*}
		\Xbd(k+1) = B \Xbd(k) + \xibd(k), \quad
		B = \begin{pmatrix}
			1 & 1 & 0  \\
			1 & 1 & 1  \\
			0 & 1 & 1  \end{pmatrix},
	\end{equation*}
	where $(X_1(0),X_2(0),X_3(0))$ are independent standard Gaussian.
	We find that $(X_1(1),X_2(1),X_3(1),X_2(0))$ is Gaussian with covariance matrix
	\begin{equation*}
	 \begin{pmatrix}
			3 & 2 & 1 & 1 \\
			2 & 4 & 2 & 1 \\
			1 & 2 & 3 & 1 \\
			1 & 1 & 1 & 1 \end{pmatrix},
	\end{equation*}
	and we easily calculate $\mathrm{Cov}(X_1(1),X_3(1) \,|\, X_2(1),X_2(0)) = -1/3$. 
	Thus $X_1(1)$ is not independent of $X_3(1)$ given $X_2[1]=(X_2(1),X_2(0))$.
	\qed
\end{Example}

\subsection{Gibbs-type uniqueness and a consistency property} \label{se:consist-Gibbs}

We close this section by presenting two additional results that are not used in the derivations of \local\ but that are of independent interest.
Throughout this section, we fix a graph $G$, along with   
noises and mappings $(F_v^k)_{v \in G, k \in \mathbb{N}_0}$ that satisfy Condition \ref{cond:deterministic},  and let $X^G$ satisfy  the dynamics \eqref{eq:graph}. 

We first establish a uniqueness property in the spirit of Gibbs measures. Namely, we show that the law $\mu$ of the system is fully characterized by its initial (time-zero) distribution 
and its ``specifications'' $\Lmc(X_A[k] \, | \, X_{\partial^2A}[k])$, for finite sets $A \subset V$.

\begin{Theorem}[Gibbs Uniqueness] \label{thm:Gibbs_uniqueness}
Given $X = X^G$ satisfying \eqref{eq:graph}, let $Y=(Y_v)_{v \in V}$ denote any other $(\PolishX^\infty)^V$-valued random variable, with $Y_v[k]=(Y_v(0),\ldots,Y_v(k))$ denoting the trajectories, $v \in V$, 
that satisfies the following three properties: 
\begin{enumerate}[(i)]
    \item $Y(0) \stackrel{d}{=} X(0)$;
    \item for each $k \in \N_0$ and each finite set $A \subset V$ the law of $Y_A[k]$ is mutually absolutely continuous with respect to that of $X_A[k]$;
    \item for each $k \in \N_0$, finite set $A \subset V$, and $\Lmc(X_{\partial A}[k])$-almost every $x_{\partial A}[k]$,
    \begin{equation} 
    \label{eq:Gibbs_measure_property}
    \Lmc\left(Y_A[k] \mid {Y_{\partial A}}[k] = x_{\partial A}[k] \right) = \Lmc\left({X_A}[k] \, | \, {X_{\partial A}}[k] = x_{\partial A}[k]\right).
    \end{equation}
\end{enumerate}
Then $Y \stackrel{d}{=} X$.
\end{Theorem}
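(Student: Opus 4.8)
The plan is to prove $Y\stackrel{d}{=}X$ by establishing equality of all finite-dimensional marginals, namely $\Lmc(X_A[k])=\Lmc(Y_A[k])$ for every finite $A\subset V$ and every $k\in\N_0$; since the law of a $(\PolishX^\infty)^V$-valued process is determined by its finite marginals, this gives the theorem. I would run an induction on time $k$, taking as hypothesis $H_k$ the statement that $\Lmc(X_A[k])=\Lmc(Y_A[k])$ for all finite $A$. The base case $H_0$ is immediate from hypothesis (i). The crux is the inductive step $H_k\Rightarrow H_{k+1}$, and the key realization is that the specifications \eqref{eq:Gibbs_measure_property} force $Y$ to inherit the one-step transition mechanism of $X$.

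First I would record the transition structure of $X$. Because the noises are mutually independent and independent of the past (Condition \ref{cond:deterministic}(1)) and the dynamics \eqref{eq:graph} are local, conditionally on the entire past $X_V[k]$ the increments $(X_v(k+1))_{v\in V}$ are independent, with $\Lmc(X_v(k+1)\mid X_V[k])=R^k_v(\cdot\mid X_v[k],X_{N_v(G)}[k])$, where $R^k_v(\cdot\mid x_v[k],x_{N_v(G)}[k])$ denotes the law of $F^k_v(x_v[k],x_{N_v(G)}[k],\xi_v(k+1))$. For finite $A$ one has $N_v(G)\subseteq A\cup\partial A$ whenever $v\in A$, and the noises $\xi_A(k+1)$ are independent of $X_{\partial A}(k+1)$ given the past; hence additionally conditioning the interior increment on the time-$(k+1)$ boundary values changes nothing, yielding
\[
\Lmc\big(X_A(k+1)\mid X_A[k],X_{\partial A}[k+1]\big)=\bigotimes_{v\in A}R^k_v\big(\cdot\mid X_v[k],X_{N_v(G)}[k]\big),
\]
a kernel depending only on $X_{A\cup\partial A}[k]$.

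Next I would apply the specification \eqref{eq:Gibbs_measure_property} at time $k+1$ to the finite set $A$, which gives $\Lmc(X_A[k+1]\mid X_{\partial A}[k+1])=\Lmc(Y_A[k+1]\mid Y_{\partial A}[k+1])$. Writing $X_A[k+1]=(X_A[k],X_A(k+1))$ and disintegrating these matching boundary-conditional laws once more over the interior past $X_A[k]$ transfers the equality to the finer conditional laws; this is legitimate because, by hypothesis (ii), $\Lmc(Y_{\partial A}[k+1])$ and $\Lmc(X_{\partial A}[k+1])$ are equivalent, so the $\Lmc(X_{\partial A}[k+1])$-a.e.\ statement in (iii) becomes a $\Lmc(Y_{\partial A}[k+1])$-a.e.\ one. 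Combining with the display above I obtain
\[
\Lmc\big(Y_A(k+1)\mid Y_A[k],Y_{\partial A}[k+1]\big)=\bigotimes_{v\in A}R^k_v\big(\cdot\mid Y_v[k],Y_{N_v(G)}[k]\big),
\]
whose right-hand side depends only on $Y_{A\cup\partial A}[k]$. Conditioning down via the tower property then gives $\Lmc(Y_A(k+1)\mid Y_{A\cup\partial A}[k])=\bigotimes_{v\in A}R^k_v(\cdot\mid Y_v[k],Y_{N_v(G)}[k])$, i.e.\ $Y$ obeys the same local transition as $X$. To close the induction, note that $\Lmc(Y_A[k+1])$ is then obtained from $\Lmc(Y_{A\cup\partial A}[k])$ by integrating the increment kernel $\bigotimes_{v\in A}R^k_v$ against $\Lmc(Y_{\partial A}[k]\mid Y_A[k])$ and $\Lmc(Y_A[k])$; the identical functional expresses $\Lmc(X_A[k+1])$ in terms of $\Lmc(X_{A\cup\partial A}[k])$. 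Since $A\cup\partial A$ is finite, $H_k$ makes these inputs coincide, so $\Lmc(Y_A[k+1])=\Lmc(X_A[k+1])$, which is $H_{k+1}$.

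The hard part will be the passage from (iii) — which only constrains conditional laws given the \emph{first} boundary $\partial A$ — to the explicit product-kernel form for $Y$. This hinges on two delicate points: (a) the iterated disintegration of the matching boundary-conditional laws, where hypothesis (ii) is exactly what is needed to carry the almost-everywhere qualifier from $X$'s boundary law over to $Y$'s; and (b) the fact, particular to $X$, that further conditioning the interior increment on the time-$(k+1)$ boundary values is harmless, which is precisely the conditional independence of increments afforded by the independent noises of Condition \ref{cond:deterministic}(1). I expect everything else (the Kolmogorov extension reduction and the final integration) to be routine measure theory on Polish spaces.
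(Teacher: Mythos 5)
Your proposal is correct, and at the level of strategy it matches the paper's proof: both run an induction on the time index $k$, both apply the specification \eqref{eq:Gibbs_measure_property} on a finite window at time $k+1$, and both rest on the observation that, conditionally on the interior past and the boundary trajectory up to time $k+1$, the interior increment of $X$ is governed by the product of the one-step kernels $R^k_v$ because $\xi_A(k+1)$ is independent of everything being conditioned on. The executions differ in a nontrivial way, however. The paper restricts attention to the balls $A_n$ around a fixed root and introduces an auxiliary process $W$ that copies $Y$ on $A_n$ up to time $k$ and on $\partial A_n$ up to time $k+1$ but regenerates the interior increment with fresh noise; it then shows $Y_{A_{n+1}}[k+1]\stackrel{d}{=}W_{A_{n+1}}[k+1]$ by a chain of conditional-expectation identities invoking \eqref{eq:Gibbs_measure_property} twice, and $X_{A_n}[k+1]\stackrel{d}{=}W_{A_n}[k+1]$ from the induction hypothesis together with the kernel identity. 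You instead disintegrate the matching boundary-conditional laws of $X_A[k+1]$ and $Y_A[k+1]$ over the interior past to identify $Y$'s conditional transition kernel directly as the product kernel, then integrate it against $\Lmc(Y_{A\cup\partial A}[k])=\Lmc(X_{A\cup\partial A}[k])$, which is the induction hypothesis applied to the finite set $A\cup\partial A$. Your route avoids the intermediate coupling and works uniformly over arbitrary finite $A$ (subsuming the finite-graph case the paper treats separately), at the price of slightly more delicate bookkeeping of null sets in the iterated disintegration --- which you correctly resolve via hypothesis (ii), exactly where the paper also needs it in order to evaluate the $\Lmc(X_{\partial A}[k+1])$-a.e.\ identity \eqref{eq:Gibbs_measure_property} along $Y$'s boundary law.
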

\begin{proof}
If $G$ is finite, then taking $A=V$ (which has $\partial^2A=\emptyset$) in \eqref{eq:Gibbs_measure_property} proves the result. 

Now consider infinite $G$. 
We will prove by induction that $Y[k] \stackrel{d}{=} X[k]$ for each $k \in \N_0$.
By assumption (i), this holds for $k=0$.

Now suppose $Y[k] \stackrel{d}{=} X[k]$ for some $k \in \N_0$.
Fix $n \in \N$ and let $A_n := \{u \in V : d(u,\myroot) \le n\}$, where $d(\cdot,\cdot)$ denotes the graph metric on $G$.
In the following we will construct $W_{A_n}[k+1]$ such that 
\begin{equation}
    \label{eq:XYZ}
    Y_{A_n}[k+1] \stackrel{d}{=} W_{A_n}[k+1]  \stackrel{d}{=} X_{A_n}[k+1]. 
\end{equation}
For this, first let 
\begin{equation}
    \label{eq:YZ}
    W_{A_n}[k]:=Y_{A_n}[k], \quad W_{A_{n+1} \setminus A_n}[k+1]:=Y_{A_{n+1} \setminus A_n}[k+1], 
\end{equation}
and for $v \in A_n$, let $W_v(k+1)$ be defined in a manner similar to \eqref{eq:graph}, that is,
$$W_v(k+1) = F_v^k(W_v[k],W_{N_v(G)}[k],\eta_v(k+1)),$$
where $\Lmc(\eta_v(k+1)) = \Lmc(\xi_v(k+1))$ and the random variables $\{\eta_v(k+1) : v \in A_n\}$ are mutually independent and also independent of $W_{A_n}[k]$ and $W_{A_{n+1} \setminus A_n}[k+1]$.
Comparing this with \eqref{eq:graph}, we see that
\begin{align}
    & \Lmc(W_{A_n}(k+1) \,|\, W_{A_n}[k] = x_{A_n}[k], W_{A_{n+1} \setminus A_n}[k+1] = x_{A_{n+1} \setminus A_n}[k+1]) \notag \\
    & \qquad \qquad  = \Lmc(X_{A_n}(k+1) \,|\, X_{A_n}[k] = x_{A_n}[k], X_{A_{n+1} \setminus A_n}[k+1] = x_{A_{n+1} \setminus A_n}[k+1]) \label{eq:Z1}
\end{align}
and in fact they are also equal to
\begin{equation}
    \Lmc(W_{A_n}(k+1) \,|\, W_{A_{n+1}}[k] = x_{A_{n+1}}[k]) = \Lmc(X_{A_n}(k+1) \,|\, X_{A_{n+1}}[k] = x_{A_{n+1}}[k]) \label{eq:Z2}
\end{equation}
for $\Lmc(X_{A_{n+1}}[k+1])$-almost every $x_{A_{n+1}}[k+1]$.
Now fix  bounded measurable functions $f_1 \colon \PolishX^{A_n} \to \R$ and $f_2 \colon (\PolishX^{k+1})^{A_n} \times (\PolishX^{k+2})^{A_{n+1} \setminus A_n} \to \R$.
For $\Lmc(X_{A_{n+1}}[k+1])$-almost every $x_{A_{n+1}}[k+1]$, using \eqref{eq:Z1} we can write
\begin{align}
    & \ftil_1(x_{A_n}[k],x_{A_{n+1} \setminus A_n}[k+1]) \label{eq:ftil_1} \\
    & := \Emb[f_1(W_{A_n}(k+1)) \,|\, W_{A_n}[k] = x_{A_n}[k], W_{A_{n+1} \setminus A_n}[k+1] = x_{A_{n+1} \setminus A_n}[k+1]] \notag \\
    & = \Emb[f_1(X_{A_n}(k+1)) \,|\, X_{A_n}[k] = x_{A_n}[k], X_{A_{n+1} \setminus A_n}[k+1] = x_{A_{n+1} \setminus A_n}[k+1]], \notag
\end{align}
and therefore
\begin{align*}
    & \Emb[f_1(W_{A_n}(k+1)) f_2(W_{A_n}[k]) \,|\, W_{A_{n+1} \setminus A_n}[k+1] = x_{A_{n+1} \setminus A_n}[k+1]] \\
    & = \Emb [ \ftil_1(W_{A_n}[k],x_{A_{n+1} \setminus A_n}[k+1]) f_2(W_{A_n}[k])  \,|\, W_{A_{n+1} \setminus A_n}[k+1] = x_{A_{n+1} \setminus A_n} [k+1]] \\
    & = \Emb [ \ftil_1(Y_{A_n}[k],x_{A_{n+1} \setminus A_n}[k+1]) f_2(Y_{A_n}[k])  \,|\, Y_{A_{n+1} \setminus A_n}[k+1] = x_{A_{n+1} \setminus A_n} [k+1]] \\
    & = \Emb [ \ftil_1(X_{A_n}[k],x_{A_{n+1} \setminus A_n}[k+1]) f_2(X_{A_n}[k])  \,|\, X_{A_{n+1} \setminus A_n}[k+1] = x_{A_{n+1} \setminus A_n}[k+1] ] \\
    & = \Emb[f_1(X_{A_n}(k+1)) f_2(X_{A_n}[k]) \,|\, X_{A_{n+1} \setminus A_n}[k+1] = x_{A_{n+1} \setminus A_n}[k+1]] \\
    & = \Emb[f_1(Y_{A_n}(k+1)) f_2(Y_{A_n}[k]) \,|\, Y_{A_{n+1} \setminus A_n}[k+1] = x_{A_{n+1} \setminus A_n}[k+1]]
\end{align*}
where the first equality uses the tower property and \eqref{eq:ftil_1}, the second follows from \eqref{eq:YZ}, the third uses $A_{n+1} \setminus A_n = \partial A_n$ and \eqref{eq:Gibbs_measure_property} with $A=A_n$ at time $k+1$, 
the fourth uses again the tower property and \eqref{eq:ftil_1}, and the last uses again \eqref{eq:Gibbs_measure_property} with $A=A_n$ at time $k+1$.
Combining this with \eqref{eq:YZ}, we see that 
$$Y_{A_{n+1}}[k+1] \stackrel{d}{=} W_{A_{n+1}}[k+1].$$ 
Using the induction assumption $Y[k] \stackrel{d}{=} X[k]$, we have
$$X_{A_{n+1}}[k] \stackrel{d}{=} W_{A_{n+1}}[k].$$ 
Combining this with \eqref{eq:Z2}, we have
$$X_{A_n}[k+1] \stackrel{d}{=} W_{A_n}[k+1].$$ 
Therefore \eqref{eq:XYZ} holds.
Since $n \in \N$ is arbitrary, we have
$Y[k+1] \stackrel{d}{=} X[k+1]$
and this completes the proof.
\end{proof}

\begin{Remark}
Note that Theorem  \ref{thm:Gibbs_uniqueness} is really a distinct result from the Markov random field property of Theorem \ref{thm:cond_independence} despite some superficial 
similarity.  
In particular,  it is interesting and perhaps surprising to note that the above Gibbs-type uniqueness result does not have to assume any conditional independence property \`{a} la \eqref{eq:cond-independence}, or any Gibbs-type condition on the initial states.
    Moreover,  the specifications in  \eqref{eq:Gibbs_measure_property} involve conditioning on just a single boundary and  not on a double boundary (although the result would also hold for boundaries of any order $\partial^m$, as one only has to replace $A_{n+1}$ by $A_{n+m}$ in the proof).  
    In contrast, Theorem \ref{thm:cond_independence} and Example \ref{eg:counter_past_of_one} state that in general  $X$ is only a second-order Markov random field, and not a first-order one. 
\end{Remark}

Lastly we establish a certain consistency property.
The second-order Markov random field property proved in Theorem \ref{thm:cond_independence} shows that 
by conditioning on the dynamics on the double-boundary of a set $A$, the dynamics within $A$  is insensitive 
to the dynamics outside of $A \cup \partial^2 A$. 
Here we strengthen this property by showing that  
when conditioning on $X^G_{\partial^2 A}$ the 
dynamics inside $A$ is also insensitive in a suitable sense to the 
graph structure  outside  $A \cup \partial^2 A$. 
To make this precise, we introduce some notation.
Let $H=(V_H,E_H)$ be a subgraph of $G$, meaning $V_H \subset V$ and $E_H = \{(u,v) \in V_H^2 : (u,v) \in E\}$.
As usual, $N_v(H)$ and $N_v(G)$ denote the neighborhood sets relative to the respective graphs.

Let $F_v^{k,H} \colon \PolishX^{k+1} \times (\PolishX^{k+1})^{N_v(H)} \times \Nspace \rightarrow \PolishX$, $v \in H$, be a family of measurable functions satisfying the consistency condition
\begin{equation}
F^{k,H}_v \equiv F^k_v, \quad \text{ for } v \in V_H \text{ s.t. } N_v(G) \subset V_H. \label{def:F-consistency}
\end{equation}
This consistency condition demands that $F^{k,H}_v$ agrees with $F^k_v$ except possibly for $v$ lying on the ``edge'' of $H$ in the sense that one of its $G$-neighbors is outside of $H$.
Define a collection of interacting processes on $H$ as follows:
\begin{equation}
\label{XH}
	X^H_v(k+1) := F_v^{k,H}(X^H_v[k],X^H_{N_v(H)}[k],\xi_v(k+1)), \qquad k \in \N_0, \ \ v \in V_H.
\end{equation}

\begin{Proposition}[A consistency property] 
    \label{thm:cond_consistency}
    Suppose the functions $(F_v^{k,H})_{v \in H}$ satisfy the 
    the consistency condition 
    \eqref{def:F-consistency},  and let $X^H$ be the associated process defined in \eqref{XH}.  Fix   any set $A \subset V$ with $A \cup \partial^2 A \subset V_H$. If the initial conditions $X^G(0)$ satisfy 
    the conditional independence property \eqref{eq:cond-independence} for  $A$,  and also satisfy the identity 
    \begin{equation}
        \Lmc\left({X^H_A}(0) \, | \, X^H_{\partial^2A}(0) = x_{\partial^2 A}\right) = \Lmc\left({X^G_A}(0) \, | \, X^G_{\partial^2A}(0) = x_{\partial^2 A}\right) \label{consistency-time0}
    \end{equation}
    for $\Lmc(X^G_{\partial^2A}(0))$-almost every $x_{\partial^2 A}\in \PolishX^{\partial^2 A}$, 
    then it follows that 
    \begin{equation*}
        \Lmc\left({X^H_A}[k] \, | \, X^H_{\partial^2A}[k] = x_{\partial^2 A}[k]\right) = \Lmc\left({X^G_A}[k] \, | \, X^G_{\partial^2A}[k] = x_{\partial^2 A}[k]\right)
    \end{equation*}
    for each $k \in \N_0$ and $\Lmc(X^G_{\partial^2A}[k])$-almost every $x_{\partial^2 A}[k]\in (\PolishX^{k+1})^{\partial^2 A}$.
\end{Proposition}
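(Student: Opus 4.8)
The plan is to induct on $k$, proving that the regular conditional law $\Lmc(X^H_A[k]\mid X^H_{\partial^2A}[k]=\cdot)$ coincides, for a.e.\ argument, with $\Lmc(X^G_A[k]\mid X^G_{\partial^2A}[k]=\cdot)$; call this statement $(\ast_k)$. The base case $k=0$ is exactly the hypothesis \eqref{consistency-time0}. Before starting, I would record two structural facts that make the two systems \emph{locally identical}. First, since $A\cup\partial^2A\subset V_H$ and $H$ is the induced subgraph, the boundaries $\partial A$ and $\partial^2A$ are the same whether computed in $G$ or in $H$, and for every $v\in A\cup\partial A$ one has $N_v(H)=N_v(G)\subset A\cup\partial^2A$; hence by the consistency condition \eqref{def:F-consistency} the update maps satisfy $F^{k,H}_v\equiv F^k_v$ there. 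Thus the one-step evolution of every vertex in $A\cup\partial A$ is given by the identical measurable function in both systems. Second, both $X^G$ and $X^H$ are instances of the dynamics \eqref{eq:graph} whose initial laws are second-order Markov random fields relative to $A$ (for $X^G$ this is the hypothesis \eqref{eq:cond-independence}; for $X^H$ the analogous property within $H$ is needed, discussed below), so Theorem \ref{thm:cond_independence} and relation \eqref{pf:main1} are available for each.

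The inductive step is a \emph{local transfer} computation performed identically for the two systems. Fix a conditioning trajectory $x_{\partial^2A}[k+1]$. Because $X_A[k+1]=(X_A[k],X_A(k+1))$ with $X_A(k+1)=F^k_A(X_A[k],X_{\partial A}[k],\xi_A(k+1))$ and $X_{\partial A}[k]$ part of the fixed conditioning value, the conditional law $\Lmc(X_A[k+1]\mid X_{\partial^2A}[k+1]=x_{\partial^2A}[k+1])$ is the pushforward, under a map built only from $F^k_A$ and the conditioning value, of the conditional law of $(X_A[k],\xi_A(k+1))$ given $X_{\partial^2A}[k+1]$. To this last conditional law I would apply \eqref{pf:main1}, which replaces conditioning on the whole future $X_{\partial^2A}[k+1]$ by conditioning on the present-and-one-step data $\big(X_{\partial A}(k+1),X_{\partial^2A}[k]\big)$. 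This is the crucial reduction, and it is exactly where the second-order Markov property enters.

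It then remains to express $\Lmc\big(X_A[k],\xi_A(k+1)\mid X_{\partial A}(k+1)=x_{\partial A}(k+1),\,X_{\partial^2A}[k]=x_{\partial^2A}[k]\big)$ through data common to both systems. Conditioned on $X_{\partial^2A}[k]=x_{\partial^2A}[k]$, the variable $X_A[k]$ has law $\mu_k:=\Lmc(X_A[k]\mid X_{\partial^2A}[k]=x_{\partial^2A}[k])$, which agrees for $G$ and $H$ by the induction hypothesis $(\ast_k)$; the noises $\xi_A(k+1),\xi_{\partial A}(k+1)$ are independent of $X_A[k]$ with their fixed laws; and $X_{\partial A}(k+1)=h(X_A[k],x_{\partial^2A}[k],\xi_{\partial A}(k+1))$ for a function $h$ assembled from $\{F^k_v:v\in\partial A\}$ (using $N_v(G)\subset A\cup\partial^2A$), again common to both. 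Hence conditioning further on $X_{\partial A}(k+1)=x_{\partial A}(k+1)$ reweights $\mu_k\otimes\Lmc(\xi_A(k+1))$ by a kernel depending only on $\mu_k$, $h$, the conditioning values, and the noise laws, all identical for the two systems. Pushing the result forward through the map of the previous paragraph yields $(\ast_{k+1})$. Lemmas \ref{lem:cond_indep_fact_1} and \ref{le:conditionalindependence1} handle the conditional-independence and measurability bookkeeping in these steps.

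I expect the main obstacle to be the reduction via \eqref{pf:main1}: one must check carefully that conditioning on $X_{\partial A}(k+1)$, a noisy function of the very variable $X_A[k]$ whose law we are tracking, transforms $\mu_k$ by a kernel that is genuinely intrinsic to $A\cup\partial A$, with no hidden dependence on the graph outside $A\cup\partial^2A$. A secondary but essential point is that \eqref{pf:main1} is legitimately available for $X^H$: this needs $X^H(0)$ to be a second-order Markov random field relative to $A$ within $H$, i.e.\ $X^H_A(0)\perp X^H_{V_H\setminus(A\cup\partial^2A)}(0)\mid X^H_{\partial^2A}(0)$. Some such hypothesis on $X^H(0)$ is indispensable: without it, a short linear-Gaussian example on a path (with $A$ a single interior vertex) shows the conclusion can fail, because conditioning on the evolved outer boundary $X_{\partial^2A\setminus\partial A}(k+1)$ can leak information about $X_A[k]$ through correlations in the complement. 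I would therefore make this assumption on $X^H(0)$ explicit and check it is consistent with the standing hypotheses.
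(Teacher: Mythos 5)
Your proposal follows essentially the same route as the paper's proof in Appendix \ref{sec:pf-cond_consistency}: induct on $k$, augment by the independent noises, push forward through the common update maps to adjoin $X_{\partial A}(k+1)$, condition on that variable, and invoke relation \eqref{pf:main1} (a consequence of the second-order Markov random field property) to upgrade the conditioning to $X_{\partial^2 A}[k+1]$ before mapping forward to $X_A[k+1]$. Your closing observation---that \eqref{pf:main1} must also be available for $X^H$, and hence that $X^H(0)$ needs the analogous conditional independence relative to $A$ within $H$---is a correct and careful reading of a hypothesis the paper's statement leaves implicit.
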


The proof of Proposition \ref{thm:cond_consistency} is similar to, though more involved than, that of Proposition \ref{prop:properties-GW}, and is provided in Appendix \ref{sec:pf-cond_consistency} for completeness.

\section{Galton-Watson Trees}
\label{sec:GW}

This section contains the proofs of Theorem  \ref{thm:localequations-GW} and Theorem \ref{thm:localequations-UGW}. Throughout the section, $\tree \sim \mathrm{GW}(\rho_{\myroot},\rho)$, and $(X_v)_{v \in \V}$ is defined as in \eqref{eq:GW}. We also define as usual $Y_v[k] = (1_{\{v \in \tree\}},X_v[k])$ for $v \in \V$, $k \in \N_0$.

\subsection{Properties of the particle system}
\label{subs:props-part}

Along the way to proving Theorems \ref{thm:localequations-GW} and \ref{thm:localequations-UGW}, as before we derive two useful properties of the conditional distributions. 
Recall the sets $\V,\V^0,\V_{v+},\V_{v-}$ introduced in Section \ref{subs:ulam-harris-neveu}.

\begin{Proposition}[Insenstivity of certain conditional laws to the choice of base vertex]  \label{prop:properties-GW}
Suppose Condition \ref{cond:GW} holds.
Then the following hold for each $k \in \N_0$:
\begin{enumerate}[(i)]
\item 
For each  $v \in \V^0$, $Y_{\V_{v+}}[k]$ is conditionally independent of $Y_{\V_{v-}}[k]$ given $Y_{\{v,\mom_v\}}[k]$.
\item The conditional law of $(Y_{vu}[k])_{u \in \V^0}$ given $(Y_v[k],Y_{\mom_v}[k])$ does not depend on the choice of $v \in \V^0$. 
More precisely, there exists a measurable map $\Theta_k \colon (\{0,1\} \times \PolishX^{k+1})^2 \rightarrow \P((\{0,1\} \times \PolishX^{k+1})^{\V^0})$ such that, for every $v \in \V^0$ and every Borel set $B \subset (\{0,1\} \times \PolishX^{k+1})^{\V^0}$, we have 
\[
\Theta_k(Y_v[k], Y_{\mom_v}[k])(B) = \Pmb\left(  (Y_{vu}[k])_{u \in \V^0} \in B \, | \, Y_v[k], Y_{\mom_v}[k] \right) \ \ a.s.
\]
\end{enumerate}
\end{Proposition}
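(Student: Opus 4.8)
The plan is to establish parts (i) and (ii) in turn by induction on $k$: I prove (i) for all $k$ first, then use it inside the inductive step for (ii). Throughout I work with the augmented process $Y_v[k]=(1_{\{v\in\tree\}},X_v[k])$, which records both the local tree structure and the trajectories.

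For part (i), I would re-run the proof of Theorem \ref{thm:cond_independence} on the $Y$-process, in the roles $A=\V_{v+}$, $\partial^2A=\{v,\mom_v\}$, and $\V\setminus(A\cup\partial^2A)=\V_{v-}$; the point is that on a tree the double boundary of $\V_{v+}$ is exactly $\{v,\mom_v\}$. One cannot simply cite Theorem \ref{thm:cond_independence} verbatim because the Ulam--Harris--Neveu set $\V$ is not locally finite, but the cemetery convention lets the update rule read off $N_\bullet(\tree)$ from the indicators carried in $Y$, so all maps stay well defined and measurable. The base case $k=0$ is Condition \ref{cond:GW}(1). For the inductive step, assuming $Y_{\V_{v+}}[k]\perp Y_{\V_{v-}}[k]\mid Y_{\{v,\mom_v\}}[k]$, set $\mathsf X:=(Y_{\V_{v+}}[k],\xi_{\V_{v+}\cup\{v\}}(k+1))$, $\mathsf Y:=(Y_{\V_{v-}}[k],\xi_{\V_{v-}\cup\{\mom_v\}}(k+1))$, and $\mathsf Z:=Y_{\{v,\mom_v\}}[k]$. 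Since the noises are i.i.d., independent of the time-$k$ data, and indexed by disjoint sets, one gets $\mathsf X\perp\mathsf Y\mid\mathsf Z$. Locality of \eqref{eq:GW} makes $Y_v[k+1]$ and $Y_{\V_{v+}}[k+1]$ measurable functions of $(\mathsf X,\mathsf Z)$ and $Y_{\mom_v}[k+1]$, $Y_{\V_{v-}}[k+1]$ measurable functions of $(\mathsf Y,\mathsf Z)$ (no vertex of $\V_{v-}$ is adjacent to $v$). Applying Lemma \ref{le:conditionalindependence1} with $\phi(\mathsf X,\mathsf Z)=Y_v[k+1]$ and $\psi(\mathsf Y,\mathsf Z)=Y_{\mom_v}[k+1]$ yields conditional independence given $(\mathsf Z,\phi,\psi)$, which generates the same information as $Y_{\{v,\mom_v\}}[k+1]$; then Lemma \ref{lem:cond_indep_fact_1} upgrades this to $Y_{\V_{v+}}[k+1]\perp Y_{\V_{v-}}[k+1]\mid Y_{\{v,\mom_v\}}[k+1]$, closing the induction.

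For part (ii), write $\mathcal W^v[k]:=(Y_{vu}[k])_{u\in\V^0}=Y_{\V_{v+}}[k]$ and, for conditioning values, $(a,b)=(Y_v[k],Y_{\mom_v}[k])$; I would prove by induction that the kernel $\Theta_k(a,b):=\Lmc(\mathcal W^v[k]\mid Y_v[k]=a,Y_{\mom_v}[k]=b)$ is independent of $v$, the base case $k=0$ being the invariance clause of Condition \ref{cond:GW}(1). The driving observation is spatial homogeneity: since $F^k$ is the same at every vertex and the relabeling $u\mapsto vu$ identifies the subtree below $v$ with the generic rooted form, the subtree update $\mathcal W^v(k+1)$ is a $v$-independent function of $(\mathcal W^v[k],Y_v[k],\text{noises})$, and $Y_v(k+1)$ is a $v$-independent function of $(Y_v[k],Y_{\mom_v}[k],\mathcal W^v[k],\xi_v(k+1))$. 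The only genuinely $v$-dependent quantity is $Y_{\mom_v}(k+1)$, whose law reflects the $\V_{v-}$-structure and hence the position of $v$ in the tree. Here part (i) at time $k$ is decisive: conditionally on $(a,b)$ the subtree data $\mathcal W^v[k]$ is independent of the $\V_{v-}$-data, so $(\mathcal W^v[k+1],Y_v(k+1))$ is conditionally independent of $Y_{\mom_v}(k+1)$ given $(a,b)$, and the conditional law of $(\mathcal W^v[k+1],Y_v(k+1))$ given $(a,b)$ is a $v$-independent kernel $\Xi_k(a,b)$ assembled only from $\Theta_k$, the homogeneous maps, and the noise law. Conditioning finally on $Y_v[k+1]=(a,c)$ and $Y_{\mom_v}[k+1]=(b,d)$, the conditional independence shows that conditioning additionally on $Y_{\mom_v}(k+1)=d$ does not alter the relevant law, so $\Theta_{k+1}$ is the disintegration of the $v$-independent $\Xi_k(a,b)$ along $Y_v(k+1)=c$, hence $v$-independent. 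The asserted measurable map $\Theta_k$ then exists by the standard construction of regular conditional probabilities on Polish spaces together with this $v$-invariance.

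I expect the disintegration step in part (ii) to be the main obstacle. One must cleanly isolate the $v$-dependent factor (the conditional law of $Y_{\mom_v}(k+1)$), verify through part (i) that conditioning on it reintroduces no $v$-dependence into the conditional law of the subtree, and keep the almost-everywhere-defined versions and measurable selections of the kernels consistent across all $v$ even though the underlying laws of $(Y_v[k+1],Y_{\mom_v}[k+1])$ themselves vary with $v$. The conditional-independence arguments of part (i) are comparatively mechanical once the decomposition into $(\mathsf X,\mathsf Y,\mathsf Z)$ is set up, so the technical weight of the proof concentrates in this bookkeeping for part (ii).
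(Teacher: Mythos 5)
Your proposal is correct and follows essentially the same route as the paper: part (i) is the identical inductive application of Lemmas \ref{le:conditionalindependence1} and \ref{lem:cond_indep_fact_1} to the triple consisting of the $\V_{v+}$-data with its noises, the $\V_{v-}$-data with its noises, and $Y_{\{v,\mom_v\}}[k]$, while part (ii) rests on the same two ingredients the paper uses, namely the $v$-independence of the update maps under the relabeling $u\mapsto vu$ and the fact (extracted from the part (i) argument) that additionally conditioning on $Y_{\mom_v}(k+1)$ does not alter the conditional law of the subtree data. The paper merely organizes part (ii) as a seven-step chain of kernels (appending noises, pushing forward, conditioning, marginalizing, swapping the conditioning from $Y_{\mom_v}[k]$ to $Y_{\mom_v}[k+1]$) rather than your single disintegration of the joint kernel, but the content is the same.
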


\begin{proof}
Both claims are true for $k=0$ by Condition \ref{cond:GW}(1). For $k \ge 1$ we proceed by induction.

(i) Assume the claim is true for some $k \in \N_0$. 
	Since $\{\xi_v(k+1) : v \in \V\}$ are independent, and since $\{\xi_v(k+1) : v \in \V\}$ are independent of $\{X_v[k] : v \in \V\}$ and $\tree$, we have
	\begin{equation*}
		({Y_{\V_{v+}}}[k],\xi_{{\V_{v+}} \cup \{v\}}(k+1)) \perp ({Y_{\V_{v-}}}[k],\xi_{{\V_{v-}} \cup \{\mom_v\}}(k+1)) \: | \: {Y_{\{v,\mom_v\}}}[k].
	\end{equation*}
	Next, notice that $Y_{v}(k+1)$ is measurable with respect to $({Y_{\V_{v+}}}[k],\xi_{v}(k+1),{Y_{\{v,\mom_v\}}}[k])$, and $Y_{\mom_v}(k+1)$ is measurable with respect to $({Y_{\V_{v-}}}[k],\xi_{\mom_v}(k+1),{Y_{\{v,\mom_v\}}}[k])$. Observing that  $Y_{{v,\pi_v}} (k+1) = (Y_{(v,\pi_v)}(k),Y_{v}(k+1), Y_{\pi_v}(k+1))$,  use Lemma \ref{le:conditionalindependence1} to deduce that 
	\begin{equation}
	({Y_{\V_{v+}}}[k],\xi_{{\V_{v+}} \cup \{v\}}(k+1)) \perp ({Y_{\V_{v-}}}[k],\xi_{{\V_{v-}} \cup \{\mom_v\}}(k+1)) \: | \: {Y_{\{v,\mom_v\}}}[k+1]. \label{pf:GW_main0}
	\end{equation}
	For use in part (ii), note that  Lemma \ref{le:conditionalindependence1} also implies 
	\begin{align}
		\Lmc&\left({Y_{\V_{v+}}}[k],\xi_{{\V_{v+}} \cup \{v\}}(k+1) \, | \, {Y_{\{v,\mom_v\}}}[k+1] = y_{\{v,\mom_v\}}[k+1]\right) \nonumber \\
		&= \Lmc\left({Y_{\V_{v+}}}[k],\xi_{{\V_{v+}} \cup \{v\}}(k+1) \, | \, Y_{v}(k+1) = y_{v}(k+1),{Y_{\{v,\mom_v\}}}[k] = y_{\{v,\mom_v\}}[k]\right), \label{pf:GW_main1}
	\end{align}
	for $\Lmc(Y)$-almost every $y_{\{v,\mom_v\}}[k+1] \in (\{0,1\}\times\PolishX^{k+2})^{|\{v,\mom_v\}|}$.  Finally, observing that $Y_{\V_{v+}}(k+1)$ is measurable with respect to $(Y_{\V_{v+}}[k],\xi_{\V_{v+}}(k+1),Y_{v}[k])$ and that $Y_{\V_{v-}}(k+1)$ is measurable with respect to $(Y_{\V_{v-}}[k],\xi_{\V_{v-}}(k+1),Y_{\mom_v}[k])$, 
	 Lemma~\ref{lem:cond_indep_fact_1} and \eqref{pf:GW_main0} together imply that 
	\begin{equation*}
		{Y_{\V_{v+}}}[k+1] \perp {Y_{\V_{v-}}}[k+1] \: | \: {Y_{\{v,\mom_v\}}}[k+1].
	\end{equation*}

(ii) Assume the claim is true for some $k \in \N_0$. 
	The idea of the induction is to build $$\Lmc((Y_{vu}[k+1])_{u \in \V^0} \, | \, Y_v[k+1] = y_v[k+1], Y_{\mom_v}[k+1] = y_{\mom_v}[k+1])$$ from $$\Lmc((Y_{vu}[k])_{u \in \V^0} \, | \, Y_v[k] = y_v[k], Y_{\mom_v}[k] = y_{\mom_v}[k])$$ in several steps and to check at each step that the operations involved are independent of the particular choice of $v \in \V^0$. 
	\begin{enumerate}[(a)]
	\item 
		For induction, we are assuming that the kernel
		\begin{equation}
		\Lmc((Y_{vu}[k])_{u \in \V^0} \, | \, Y_v[k] = y_v[k], Y_{\mom_v}[k] = y_{\mom_v}[k]) \label{pf:GW_conditionalspecification-step0}
		\end{equation}
		does not depend on $v \in \V^0$.
	\item 
		We claim that then 
		\begin{equation}
			\Lmc\left((Y_{vu}[k])_{u \in \V^0}, (\xi_{vu}(k+1))_{u \in \V} \, | \, Y_v[k] = y_v[k], Y_{\mom_v}[k] = y_{\mom_v}[k]\right) \label{pf:GW_conditionalspecification-step1}
		\end{equation}
		does not depend on $v \in \V^0$. 
		For this, since $(\xi_{vu}(k+1))_{u \in \V}$ and $((Y_{vu}[k])_{u \in \V^0},Y_v[k],Y_{\mom_v}[k])$ are independent, it follows that the measure \eqref{pf:GW_conditionalspecification-step1} is simply the product of the conditional measure in \eqref{pf:GW_conditionalspecification-step0} with the measure $\Lmc((\xi_{vu}(k+1))_{u \in \V})$.
	\item 
		We next claim that 
		\begin{equation}
			\Lmc\left((Y_{vu}[k])_{u \in \V^0}, (\xi_{vu}(k+1))_{u \in \V}, X_v(k+1) \, | \, Y_v[k] = y_v[k], Y_{\mom_v}[k] = y_{\mom_v}[k]\right) \label{pf:GW_conditionalspecification-step2}
		\end{equation}
		does not depend on $v \in \V^0$. This follows from the fact that there exists a measurable function $\Psi \colon (\{0,1\} \times \PolishX^{k+1})^\infty \times \PolishX \rightarrow \PolishX$, \emph{independent of the choice of $v \in \V^0$}, such that
		\begin{equation*}
			X_v(k+1) = \Psi(Y_{\mom_v}[k],Y_v[k],(Y_{vu}[k])_{u \in \V^0}, \xi_v(k+1)).
		\end{equation*}
		Using this map, we then obtain the measure \eqref{pf:GW_conditionalspecification-step2} as an image measure from \eqref{pf:GW_conditionalspecification-step1}.
	\item 
		We next claim that 
		\begin{equation}
			\Lmc\left(\left. (Y_{vu}[k])_{u \in \V^0}, (\xi_{vu}(k+1))_{u \in \V} \right| X_v(k+1)=x_v(k+1), Y_v[k] = y_v[k], Y_{\mom_v}[k] = y_{\mom_v}[k]\right) \label{pf:GW_conditionalspecification-step3}
		\end{equation}
		does not depend on $v \in \V^0$.
		This follows from step (c) simply by conditioning the (random) measure in \eqref{pf:GW_conditionalspecification-step2} on its third argument. 
	\item 
		By marginalizing the measure in \eqref{pf:GW_conditionalspecification-step3} and noting that $Y_v[k+1]=(Y_v[k],X_v(k+1))$, we find that
		\begin{equation}
			\Lmc\left(\left. (Y_{vu}[k], \xi_{vu}(k+1))_{u \in \V^0} \right| Y_v[k+1] = y_v[k+1], Y_{\mom_v}[k] = y_{\mom_v}[k]\right) \label{pf:GW_conditionalspecification-step4}
		\end{equation}
		does not depend on $v \in \V^0$.
	\item 
		We saw in \eqref{pf:GW_main1} that 
		\begin{equation}
			\Lmc\left(\left. (Y_{vu}[k], \xi_{vu}(k+1))_{u \in \V^0} \right|  Y_v[k+1] = y_v[k+1], Y_{\mom_v}[k+1] = y_{\mom_v}[k+1]\right) \label{pf:GW_conditionalspecification-step5}
		\end{equation}
		is equal to the kernel in \eqref{pf:GW_conditionalspecification-step4}, and thus is still independent of $v \in \V^0$.
	\item 
		Finally, we claim that 
		\begin{equation}
			\Lmc\left(\left. (Y_{vu}[k+1])_{u \in \V^0} \right|  Y_v[k+1] = y_v[k+1], Y_{\mom_v}[k+1] = y_{\mom_v}[k+1]\right) \label{pf:GW_conditionalspecification-step6}
		\end{equation}
		does not depend on $v \in \V^0$. This follows from the fact that there exists a measurable function $\Phi \colon (\{0,1\} \times \PolishX^{k+2})^\infty \times \{0,1\} \times \PolishX \rightarrow (\{0,1\} \times \PolishX^{k+2})^\infty$, independent of the choice of $v \in \V^0$, such that
		\begin{equation*}
			(Y_{vu}[k+1])_{u \in \V^0} = \Phi((Y_{vu}[k], \xi_{vu}(k+1))_{u \in \V^0},Y_v[k]).
		\end{equation*}
		Using this map, the measure in \eqref{pf:GW_conditionalspecification-step6} is then obtained as an image measure from \eqref{pf:GW_conditionalspecification-step5}.
	\end{enumerate}
\end{proof}

\subsection{Proof of Theorem \ref{thm:localequations-GW}}

We are now ready to prove Theorem \ref{thm:localequations-GW}, following an induction argument similar to the proof of Theorem \ref{thm:localequations-reg}. Recall that $Y_v[k] := (1_{\{v \in \tree\}}, X_v[k])$, and define $\Ybar_v[k] := (1_{\{v \in \bar{\tree}\}}, \Xbar_v[k])$ similarly.

\begin{proof}[Proof of Theorem \ref{thm:localequations-GW}]
Clearly the result holds for $k=0$ by construction, i.e., $\bar{Y}_{\V_2}(0) \stackrel{d}{=} Y_{\V_2}(0)$. So we assume that $\bar{Y}_{\V_2}[k] \stackrel{d}{=} Y_{\V_2}[k]$ holds for some $k \in \N_0$ and proceed by induction. Let us write $C_v = |\tree \cap \{vk : k \in \N\}|$ for the number of children of a vertex $v \in \V$ in the tree $\tree$.
	
Write $\Ybd = Y_{\V_2}$, $\bar{\Ybd} = \Ybar_{\V_2}$.
To show $\bar{\Ybd} \stackrel{d}{=} \Ybd$, from the evolution of $\Xbar_v(k+1)$ and $X_v(k+1)$ we see that it suffices to show
\begin{equation*}
\big(\bar{\Ybd}[k],  \Zbar^k_v\big)_{v \in \V_2 \setminus \V_1} \stackrel{d}{=} \big(\Ybd[k], \lan X_{N_v(\tree)}[k] \ran\big)_{v \in \V_2 \setminus \V_1},
\end{equation*}
where we define $\Zbar_v^k :=\emptyset \in \mathcal{S}^0(\PolishX^{k+1}) \subset \SQ(\PolishX^{k+1})$ for $v \in (\V_2 \setminus \tree ) \setminus \V_1$.
It then suffices to show
\begin{equation*}
\big(\bar{\Ybd}[k], \Zbar^k_v\big)_{v \in \Umb_n} \stackrel{d}{=} \big(\Ybd[k], \lan X_{N_v(\tree)}[k] \ran\big)_{v \in \Umb_n}.
\end{equation*}
for each fixed $n \in \N$, where $\Umb_n := \{1,\dotsc,n\}^2$, since $\cup_n \Umb_n =\V_2 \setminus \V_1$.
It then suffices to show that for each bounded and continuous functions $f_0,f_v$, $v \in \Umb_n$,
\begin{equation}
\Emb \left[ f_0(\bar{\Ybd}[k]) \prod_{v \in \Umb_n} f_v(\Zbar^k_v) \right] = \Emb \left[ f_0(\Ybd[k]) \prod_{v \in \Umb_n} f_v(\lan X_{N_v(\tree)}[k] \ran) \right]. \label{pf:GWlocal-1}
\end{equation}
Now define the kernel $\cmeas[k]$ by
\[
\cmeas[k](\cdot \,|\, x_1,x_{\myroot}) := \Lmc(\lan X_{N_1(\tree)}[k] \ran \,|\, X_1[k]=x_1,X_{\myroot[k]}=x_{\myroot}).
\]
Recalling the analogous definition of $\bar{\cmeas}[k]$ from part (3) of Construction \ref{constr:UGWlocal} of the local-field equation, and the induction hypothesis $\bar{Y}_{\V_2}[k] \stackrel{d}{=} Y_{\V_2}[k]$ it follows that $\bar{\cmeas}[k] = \cmeas[k]$.
Note that Proposition \ref{prop:properties-GW}(ii) and the fact that $Y_v[k]$ is $X_v[k]$-measurable imply  that, almost surely,
\begin{equation*}
\cmeas[k](\cdot \,|\, X_v[k],X_{\mom_v}[k]) = \Lmc(\lan X_{N_v(\tree)}[k] \ran \,|\, Y_v[k],Y_{\mom_v}[k]), \quad v \in \V^0.
\end{equation*}
Combining this with Proposition \ref{prop:properties-GW}(i), we have for Borel sets $B_v \subset \SQ(\PolishX^{k+1})$, $v \in \V_2 \setminus \V_1$,
\begin{equation}
\PP\Big(\lan X_{N_v(\tree)}[k] \ran \in B_v, \, v \in \V_2 \setminus \V_1 \,\big|\, Y_{\V_2}[k]\Big) = \prod_{v \in \V_2 \setminus \V_1}\cmeas[k](B_v \,|\, X_v[k], X_{\mom_v}[k]). \label{def:updaterule2}
\end{equation}
Note that $\tree \cap \V_2$ is $Y_{\V_2}[k]$-measurable, and each term in the product for $v \in (\V_2 \setminus \tree ) \setminus \V_1$ equals $1_{B_v}(\emptyset)$ (recalling our convention that $N_v(\tree)=\emptyset$ if $v \notin \tree$).
Similarly, noting that $\Ybar_v[k]$ is $\Xbar_v[k]$-measurable, we may deduce from the conditional joint distribution of the "phantom particles" of the local-field equation specified in \eqref{def:updaterule} and our above definition $\Zbar^k_v := \emptyset$ for $v \in (\V_2 \setminus \tree ) \setminus \V_1$ that \eqref{def:updaterule} can be written as
\begin{equation}
\PP\Big(\Zbar^k_v \in B_v, \, v \in \V_2 \setminus \V_1 \,\big|\, \Ybar_{\V_2}[k]\Big) = \prod_{v \in \V_2 \setminus \V_1}\bar{\cmeas}[k](B_v \,|\, \Xbar_v[k], \Xbar_{\mom_v}[k]). \label{def:updaterule3}
\end{equation}
Hence, we may use \eqref{def:updaterule3}, $\cmeas[k]=\bar{\cmeas}[k]$, $\bar{Y}_{\V_2}[k] \stackrel{d}{=} Y_{\V_2}[k]$, and then \eqref{def:updaterule2} to find 
\begin{align*}
\Emb \left[ f_0(\bar{\Ybd}[k]) \prod_{v \in \Umb_n} f_v(\Zbar^k_v) \right] 
		& = \Emb \left[ f_0(\bar{\Ybd}[k]) \Emb \left[ \prod_{v \in \Umb_n} f_v(\Zbar^k_v) \,\Big|\, \bar{\Ybd}[k] \right] \right] \\
		& = \Emb \left[ f_0(\bar{\Ybd}[k]) \prod_{v \in \Umb_n} \int f_v  \, d\bar{\cmeas}[k](\cdot \,|\, \Xbar_v[k],\Xbar_{\mom_v}[k]) \right] \\
		& = \Emb \left[ f_0(\Ybd[k]) \prod_{v \in \Umb_n} \int f_v  \, d\cmeas[k](\cdot \,|\, X_v[k],X_{\mom_v}[k]) \right] \\
		& = \Emb \left[ f_0(\Ybd[k]) \Emb \left[\prod_{v \in \Umb_n}  f_v( \lan X_{N_v(\tree)}[k] \ran) \,\Big|\, \Ybd[k] \right] \right] \\
		& = \Emb \left[ f_0(\Ybd[k]) \prod_{v \in \Umb_n} f_v( \lan X_{N_v(\tree)}[k] \ran) \right].
\end{align*}
This establishes \eqref{pf:GWlocal-1}, completing the proof.
\end{proof}

\subsection{Additional properties of the particle system}

In Proposition \ref{prop:properties-GW}, we showed that the dynamics \eqref{eq:GW} propagate Condition \ref{cond:GW}(1), or equivalently Conditions \ref{cond:UGW}(1a,1b), over time. Similarly, to prove Theorem \ref{thm:localequations-UGW}, we will need to show that Conditions \ref{cond:UGW}(1c,1d) are preserved by the dynamics \eqref{eq:GW}. This is the role of the next two lemmas.

\begin{Lemma}[Leaf exchangeability]  \label{lem:exch}
Suppose Condition \ref{cond:UGW} holds. Then, for each $n \in \N$ and each permutation $\pi$ of $\{1,\ldots,n\}$, the conditional law of $(Y_{\myroot}[k],(Y_{\pi(1)v}[k])_{v \in \V},\dotsc,(Y_{\pi(n)v}[k])_{v \in \V})$ given $|N_{\myroot}(\tree)|=n$ is the same as that of $(Y_{\myroot}[k],(Y_{1v}[k])_{v \in \V},\dotsc,Y_{nv}[k])_{v \in \V})$.
\end{Lemma}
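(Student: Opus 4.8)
The plan is to induct on $k$, with the base case $k=0$ being precisely Condition \ref{cond:UGW}(1c). For the inductive step, the essential point is that the synchronous dynamics \eqref{eq:GW} admit a decomposition that is \emph{equivariant} under permutations of the root's children, so that the exchangeability present at time $k$ is transported to time $k+1$.

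Fix $n$ with $\PP(|N_{\myroot}(\tree)|=n) > 0$ and work throughout conditionally on the event $\{|N_{\myroot}(\tree)|=n\}$. For $i \in \{1,\ldots,n\}$ I would write $S_i[k] := (Y_{iv}[k])_{v \in \V}$ for the entire trajectory data of the subtree rooted at the child $i$; since $Y_{iv}[k]=(1_{\{iv \in \tree\}},X_{iv}[k])$, this records both the states and the local tree structure. I would also write $\xi^i := (\xi_{iv}(k+1))_{v \in \V}$ for the fresh noises used to update this subtree at step $k+1$. The first step is to record the decomposition of the one-step update. Because the dynamics are local and synchronous, the child-root $i$ updates via $X_i(k+1)=F^k(X_i[k],X_{N_i(\tree)}[k],\xi_i(k+1))$, where $N_i(\tree)$ consists of $\myroot$ together with the children of $i$, while every deeper vertex $iv$ (with $v \neq \myroot$) updates using only neighbors lying inside the subtree. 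As $F^k$ does not depend on the vertex, there is a single measurable map $G$, independent of $i$, with $S_i[k+1]=G(S_i[k],Y_{\myroot}[k],\xi^i)$; crucially $S_i[k+1]$ depends on the root only through its time-$k$ history $Y_{\myroot}[k]$ and on no other subtree's noises. Meanwhile the root updates by $X_{\myroot}(k+1)=F^k\big(X_{\myroot}[k],\lan X_1(k),\ldots,X_n(k)\ran,\xi_{\myroot}(k+1)\big)$, and because the neighbor argument lives in the unordered space $\SQ(\PolishX^{k+1})$, this value is \emph{invariant} under any permutation of the children. Hence the full time-$(k+1)$ configuration is an equivariant function of the time-$k$ data and noises: there is a measurable $H$ with
\[
(Y_{\myroot}[k+1],S_1[k+1],\ldots,S_n[k+1]) = H\big(Y_{\myroot}[k],S_1[k],\ldots,S_n[k],\xi_{\myroot}(k+1),\xi^1,\ldots,\xi^n\big),
\]
such that permuting the pairs $(S_i[k],\xi^i)$ by any $\pi$ permutes the outputs $S_i[k+1]$ by $\pi$ while leaving $Y_{\myroot}[k+1]$ fixed.

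Given this, the conclusion follows by applying $H$ to two equal-in-law inputs. By Condition \ref{cond:UGW}(2,3) the noise blocks $(\xi^1,\ldots,\xi^n)$ are i.i.d., hence exchangeable, and independent of $\xi_{\myroot}(k+1)$ and of all time-$k$ data. Combining this independence with the induction hypothesis $(Y_{\myroot}[k],(S_{\pi(i)}[k])_{i=1}^n) \stackrel{d}{=} (Y_{\myroot}[k],(S_i[k])_{i=1}^n)$ (conditionally on $\{|N_{\myroot}(\tree)|=n\}$), I obtain
\[
\big(Y_{\myroot}[k],(S_{\pi(i)}[k])_{i=1}^n,\xi_{\myroot}(k+1),(\xi^{\pi(i)})_{i=1}^n\big) \stackrel{d}{=} \big(Y_{\myroot}[k],(S_i[k])_{i=1}^n,\xi_{\myroot}(k+1),(\xi^i)_{i=1}^n\big),
\]
again conditionally on $\{|N_{\myroot}(\tree)|=n\}$. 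Applying $H$ to both sides and invoking its equivariance yields the desired identity at time $k+1$, closing the induction.

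The main obstacle I anticipate is bookkeeping rather than conceptual: verifying rigorously that $S_i[k+1]$ depends on neighboring information only through $Y_{\myroot}[k]$ and $\xi^i$, so that distinct subtrees interact solely, and symmetrically, through the root, and confirming that the single update map $G$ genuinely does not depend on $i$. Both facts rest on the spatial homogeneity of \eqref{eq:GW} and the symmetry of $F^k$ in its $\SQ(\PolishX^{k+1})$ argument, together with Remark \ref{re:treemeasurable}, which guarantees the tree structure is recoverable from the states and is propagated consistently in time.
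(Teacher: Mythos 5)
Your proof is correct and rests on the same two ingredients as the paper's: the symmetry of $F^k$ in its unordered neighbor argument (hence equivariance of the one-step update under permutations of the root's children) and the exchangeability of the i.i.d.\ noise blocks. The paper packages this as a one-shot relabeling argument --- setting $\widetilde{X}_v = X_{\pi(v)}$ for a lifted permutation, observing that $\widetilde{X}$ solves the same recursion with relabeled noises and an initial condition equal in law by Condition \ref{cond:UGW}(1c) --- rather than your explicit induction on $k$, but the underlying mechanism is identical.
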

\begin{proof}
Let $\pi$ be a $|N_{\myroot}(\tree)|$-measurable random bijection on $\N$ such that $\pi(i)=i$ for all $i > |N_{\myroot}(\tree)|$. We may lift $\pi$ to act on $\V$ by setting $\pi(\myroot)=\myroot$ and $\pi(iv)=\pi(i)v$ for all $i \in \Nmb$ and $v \in \V$. Let $\widetilde{X}_v(k)=X_{\pi(v)}(k)$ and $\widetilde{Y}_v(k) = Y_{\pi(v)}(k)$, for $v \in \V$ and $k \in \N_0$.
Condition \ref{cond:UGW}(1c) can clearly be recast as saying that $(\widetilde{Y}_v(0))_{v \in \V} \stackrel{d}{=} (Y_v(0))_{v \in \V}$. (Indeed, each has the same conditional distribution given $|N_{\myroot}(\tree)|$.) It is clear from the symmetry of $F^k$ with respect to the neighbors that $(\widetilde{X}_v(k))_{v \in \V}$ for $k \in \N_0$ obeys the same dynamics \eqref{cond:GW} as $(X_v(k))_{v \in \V}$. Since they start from the same initial distribution, it must therefore hold that $(\widetilde{Y}_v[k])_{v \in \V} \stackrel{d}{=} (Y_v[k])_{v \in \V}$ for all $k$. As $\pi$ was arbitrary, this is equivalent to the claim.
\end{proof}

\begin{Lemma}[Unimodularity] \label{lem:unimodular}
Suppose Condition \ref{cond:UGW} holds. Then the $\Gmc_*[\{0,1\}\times\PolishX^{k+1}]$-valued random variable $(\tree,(Y_v[k])_{v \in \tree})$ is unimodular for each $k \in \N_0$.
\end{Lemma}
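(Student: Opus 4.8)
The plan is to induct on $k$, leaning on two standard operations that preserve unimodularity: augmenting a unimodular marked graph with i.i.d.\ marks, and pushing the marks forward through an equivariant (isomorphism-covariant) local factor map. The only genuine input from the hypotheses is Condition \ref{cond:UGW}(1d) at time zero together with the i.i.d.\ and independence assumptions on the noises; the symmetry of the dynamics needed for equivariance is automatic, since $F^k$ takes its neighbor argument in the unordered space $\SQ(\PolishX^{k+1})$.

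For the base case $k=0$, Condition \ref{cond:UGW}(1d) asserts that the $\Gmc_*[\X]$-valued random variable $(\tree,X_\tree(0))$ is unimodular. Because $Y_v(0)=(1,X_v(0))$ for $v\in\tree$, the marks $(Y_v[0])_{v\in\tree}$ are obtained from $(X_v(0))_{v\in\tree}$ by the deterministic, vertex-wise relabeling $x\mapsto(1,x)$, which commutes with all graph isomorphisms. Thus for any bounded nonnegative Borel $F$ on $\Gmc_{**}[\{0,1\}\times\PolishX]$ one applies \eqref{def:unimodularity} for $(\tree,X_\tree(0))$ to the function $(\tree,x,o,\tilde o)\mapsto F(\tree,((1,x_v))_v,o,\tilde o)$, giving unimodularity of $(\tree,(Y_v[0])_{v\in\tree})$.

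For the inductive step, assume $(\tree,(Y_v[k])_{v\in\tree})$ is unimodular. By Condition \ref{cond:UGW}(2,3) the noises $\{\xi_v(k+1):v\in\tree\}$ are i.i.d.\ and independent of $(\tree,(Y_v[k])_{v\in\tree})$, so attaching them as additional i.i.d.\ marks preserves unimodularity (see \cite{aldous-lyons}); hence $(\tree,(Y_v[k],\xi_v(k+1))_{v\in\tree})$ is a unimodular $\Gmc_*[\{0,1\}\times\PolishX^{k+1}\times\Nspace]$-valued random variable. From \eqref{eq:GW} one reads off
$$Y_v[k+1]=\big(Y_v[k],\,(1_{\{v\in\tree\}},\,F^k(X_v[k],\langle X_{N_v(\tree)}[k]\rangle,\xi_v(k+1)))\big),$$
so $Y_v[k+1]$ is computed from the rooted marked graph $(\tree,(Y_u[k],\xi_u(k+1))_u,v)$ using only $X_v[k]$, the unordered neighbor collection $\langle X_{N_v(\tree)}[k]\rangle$, and $\xi_v(k+1)$. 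Since the neighbor argument enters through $\SQ(\PolishX^{k+1})$, this computation is invariant under any rooted isomorphism and therefore defines an equivariant measurable factor map $\Phi$ with $Y_v[k+1]=\Phi(\tree,(Y_u[k],\xi_u(k+1))_u,v)$. Applying $\Phi$ to the marks preserves unimodularity: for bounded nonnegative Borel $F$ on $\Gmc_{**}[\{0,1\}\times\PolishX^{k+2}]$, the function $\tilde F(\tree,m,o,\tilde o):=F(\tree,(\Phi(\tree,m,v))_v,o,\tilde o)$ is a well-defined bounded nonnegative Borel function on $\Gmc_{**}[\{0,1\}\times\PolishX^{k+1}\times\Nspace]$, and \eqref{def:unimodularity} for the augmented graph applied to $\tilde F$ yields \eqref{def:unimodularity} for $(\tree,(Y_v[k+1])_{v\in\tree})$, closing the induction.

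The main obstacle is making rigorous the claim that the update is a genuine factor of the isomorphism class, namely that $\Phi$ is well-defined on $\Gmc_{**}$ and measurable. The potential ambiguity is the ordering of neighbors under a relabeling, which is exactly neutralized by the $\SQ$-valued neighbor argument of $F^k$; I would spell this out by checking that under any rooted isomorphism the inputs $X_v[k]$, $\langle X_{N_v(\tree)}[k]\rangle$, and $\xi_v(k+1)$ are preserved, whence so is $Y_v[k+1]$. I would also state the two preservation facts (i.i.d.\ marks and equivariant factors) precisely with reference to \cite{aldous-lyons}, and keep the cemetery-state bookkeeping consistent so that the tree indicator $1_{\{v\in\tree\}}$ is recoverable from the marks at every stage, which is guaranteed by Remark \ref{re:treemeasurable}.
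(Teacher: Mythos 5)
Your proposal is correct and follows essentially the same route as the paper: the paper's proof also inducts on $k$, integrates out the i.i.d.\ noises $\xi_v(k+1)$ inside the unimodularity identity (your ``i.i.d.\ marks preserve unimodularity'' step) and then applies \eqref{def:unimodularity} to the resulting deterministic function of the time-$k$ marked tree (your ``equivariant factor map'' step), with the well-definedness on isomorphism classes resting, exactly as you note, on the unordered $\SQ(\PolishX^{k+1})$-valued neighbor argument of $F^k$. The only difference is packaging: the paper carries out both steps in a single explicit computation rather than citing the two preservation facts from \cite{aldous-lyons}.
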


\begin{proof}
By Condition \ref{cond:UGW}(1d), the claim is true for $k=0$. Now assume the claim is true for some $k \in \N_0$.
Let us write $\theta \in \P(\Nspace)$ for the common law of the i.i.d.\ $(\xi_v(k))_{v \in \V, k \in \N}$.
Fix a nonnegative bounded measurable function $F \colon \Gmc_{**}[\PolishX] \rightarrow \R$.
Note that $Y_v[k+1]=(Y_v[k],X_v(k+1)) = (1_{\{v \in \tree\}}, X_v[k+1])$.
Then, using the dynamics \eqref{eq:GW} and integrating out the independent noises $(\xi_v(k+1))_{v \in \tree}$, we have
\begin{align*}
\Emb & \left[ \sum_{o \in \tree} F(\tree,\{Y_v[k+1] : v \in \tree\},\myroot,o) \right] \\
& = \Emb \left[ \sum_{o \in \tree} F(\tree,\{Y_v[k], F^k(X_v[k], X_{N_v(\tree)}[k] , \xi_v(k+1)) : v \in \tree\},\myroot,o) \right] \\
& = \Emb \left[ \sum_{o \in \tree} \int_{\PolishX^\tree} F(\tree,\{Y_v[k], F^k(X_v[k], X_{N_v(\tree)}[k] , e_v) : v \in \tree\},\myroot,o) \, \prod_{v \in \tree} \theta(de_v) \right].
\end{align*}
Using the defining property \eqref{def:unimodularity} of unimodularity of $(\tree,(Y_v[k])_{v \in \tree})$, this becomes
\begin{align*}
\Emb & \left[ \sum_{o \in \tree} \int_{\PolishX^\tree} F(\tree,\{Y_v[k], F^k(X_v[k], X_{N_v(\tree)}[k] , e_v) : v \in \tree\},o,\myroot) \, \prod_{v \in \tree} \theta(de_v) \right] \\
& = \Emb \left[ \sum_{o \in \tree} F(\tree,\{Y_v[k], F^k(X_v[k], X_{N_v(\tree)}[k] , \xi_v(k+1)) : v \in \tree\},o,\myroot) \right] \\
& = \Emb \left[ \sum_{o \in \tree} F(\tree,\{Y_v[k+1] : v \in \tree\},o,\myroot) \right].
\end{align*}
Thus $(\tree,(Y_v[k+1])_{v \in \tree})$ is unimodular, and the proof is complete by induction.
\end{proof}

It is convenient now to adopt the notation $\lan \cdot \ran$ when we wish to stress that we are dealing with an \emph{unordered} sequence. That is, for $\polish_1,\ldots,\polish_n \in \Polish$ for some space $\Polish$, we may write $\lan (\polish_i)_{i=1}^n\ran$ or $\lan (\polish_1,\ldots,\polish_n)\ran$ to denote the element of $\SQ(\Polish)$ obtained by forgetting the order of the vector. Similarly, for $\polish_A=(\polish_v)_{v \in A} \in \Polish^A$, we write $\lan \polish_A\ran$ or $\lan (\polish_v)_{v \in A}\ran$ for the corresponding element of $\SQ(\Polish)$.

The last step before embarking on the main line of the proof of Theorem \ref{thm:localequations-UGW} is to combine the exchangeability and unimodularity properties from the previous two lemmas to derive a relationship between the conditional laws $\Lmc(\lan X_{N_{\myroot}(\tree)}[k] \ran \,|\, X_{\myroot}[k],X_1[k])$ and $\Lmc(\lan X_{N_1(\tree)}[k] \ran \,|\, X_1[k],X_{\myroot}[k])$. To understand properly the role of the indicator in statement, recall that $\{N_{\myroot}(\tree)=\emptyset\}=\{1 \notin \tree\}=\{X_1[k]=\cem\}$ is $X_1[k]$-measurable.

\begin{Proposition}[Transformation of conditional laws under rerooting] \label{prop:property-UGW}
Suppose Condition \ref{cond:UGW} holds.
Let $k \in \N_0$ and let $h \colon (\PolishX^{k+1})^2 \times \SQ(\PolishX^{k+1}) \to \Rmb$ be bounded and measurable.
Define $\Xi_k \colon (\{0,1\} \times \PolishX^{k+1})^2 \rightarrow \Rmb$ by 
\begin{equation*}
\Xi_k(X_{\myroot}[k], X_1[k]) := 1_{\{N_{\myroot}(\tree)\ne\emptyset\}} \frac{\Emb\left[\frac{|N_{\myroot}(\tree)|}{|N_1(\tree)|} h(X_{\myroot}[k],X_1[k],\lan X_{N_{\myroot}(\tree)}[k]\ran) \,\big|\, X_{\myroot}[k],X_1[k]\right]}{\Emb\left[\frac{|N_{\myroot}(\tree)|}{|N_1(\tree)|} \,\big|\, X_{\myroot}[k],X_1[k]\right]}.
\end{equation*}
Then, for each $v \in \N$, 
\begin{equation} \label{eq:property-UGW1}
\Xi_k(X_v[k], X_{\myroot}[k]) = \Emb\left[ h(X_v[k],X_{\myroot}[k],\lan X_{N_v(\tree)}[k]\ran) \,|\, X_{\myroot}[k],X_v[k] \right], \mbox{ on } \{v \in \tree\}.
\end{equation} 	
\end{Proposition}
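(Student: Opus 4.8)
The plan is to reduce the claim to a single mass-transport identity asserting that a degree-tilted ``root-view'' measure coincides with a ``child-view'' measure, from which the statement follows by disintegration.

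First I would reduce to the case $v=1$. On $\{v \in \tree\}$ both sides of \eqref{eq:property-UGW1} are deterministic functions of $(X_v[k],X_{\myroot}[k])$: the left side because $\Xi_k$ is by construction such a function, and the right side by the insensitivity property Proposition \ref{prop:properties-GW}(ii), which exhibits $\Lmc(\lan X_{N_v(\tree)}[k]\ran \mid Y_v[k],Y_{\mom_v}[k])$ as a single kernel independent of $v \in \V^0$ (using that $\{v \in \tree\}$ is $X_v[k]$-measurable, so conditioning on $(X_{\myroot}[k],X_v[k])$ agrees there with conditioning on $(Y_v[k],Y_{\myroot}[k])$). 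Writing $\Psi(s,t) := \Emb[h(X_1[k],X_{\myroot}[k],\lan X_{N_1(\tree)}[k]\ran)\mid X_1[k]=s,\,X_{\myroot}[k]=t]$, the right side equals $\Psi(X_v[k],X_{\myroot}[k])$ on $\{v \in \tree\}$ for every $v \in \N$. By leaf exchangeability (Lemma \ref{lem:exch}), the law of $(X_v[k],X_{\myroot}[k])$ restricted to $\{v \in \tree\}$ is the same for all $v \in \N$ and equals the law $m$ of $(X_1[k],X_{\myroot}[k])$ restricted to $\{1 \in \tree\}$, so it suffices to prove $\Xi_k(s,t)=\Psi(s,t)$ for $m$-a.e.\ $(s,t)$.

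The key reformulation introduces two finite measures on $(\PolishX^{k+1})^2 \times \SQ(\PolishX^{k+1})$: the child-view measure $\nu^{\mathrm{ch}}$, namely the law of $(X_1[k],X_{\myroot}[k],\lan X_{N_1(\tree)}[k]\ran)$ restricted to $\{1 \in \tree\}$, and the tilted root-view measure $\nu^{\mathrm{rt}}$, obtained by weighting the law of $(X_{\myroot}[k],X_1[k],\lan X_{N_{\myroot}(\tree)}[k]\ran)$ on $\{1\in\tree\}$ by the factor $|N_{\myroot}(\tree)|/|N_1(\tree)|$. Unwinding the definitions shows that $\Psi$ is the disintegration of $\nu^{\mathrm{ch}}$ over its first two coordinates, while $\Xi_k$ is the disintegration of $\nu^{\mathrm{rt}}$ over its first two coordinates — the numerator and denominator in the definition of $\Xi_k$ are precisely the conditional integral of $h$ and the conditional total mass of the tilt, so their ratio is the conditional expectation of $h$ under $\nu^{\mathrm{rt}}$. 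Hence the claim $\Xi_k = \Psi$ reduces to the single identity $\nu^{\mathrm{rt}} = \nu^{\mathrm{ch}}$.

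The heart of the argument — and the main obstacle — is to prove $\nu^{\mathrm{rt}} = \nu^{\mathrm{ch}}$ via the mass-transport principle \eqref{def:unimodularity} applied to the unimodular marked tree $(\tree,(Y_v[k])_{v \in \tree})$ supplied by Lemma \ref{lem:unimodular}. For bounded measurable $\phi \ge 0$ I would transport mass along edges with the weight $1/|N_y(\tree)|$ placed on the \emph{receiving} vertex, i.e.\ take $F(\tree,(Y_w[k])_{w},x,y) = 1_{\{y \in N_x(\tree)\}}\,|N_y(\tree)|^{-1}\,\phi(Y_x[k],Y_y[k],\lan Y_{N_x(\tree)}[k]\ran)$. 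Summing over $o \in N_{\myroot}(\tree)=\{1,\dots,|N_{\myroot}(\tree)|\}$ and invoking leaf exchangeability (Lemma \ref{lem:exch}) collapses each sum to its child-$1$ representative: the left side $\Emb[\sum_o F(\tree,(Y_w[k])_w,\myroot,o)]$ becomes $\int \phi \, d\nu^{\mathrm{rt}}$, where the factor $|N_{\myroot}(\tree)|$ counting the exchangeable children produces the tilt numerator and the weight supplies $1/|N_1(\tree)|$, while the right side $\Emb[\sum_o F(\tree,(Y_w[k])_w,o,\myroot)]$ becomes $\int \phi\, d\nu^{\mathrm{ch}}$, the weight $1/|N_{\myroot}(\tree)|$ there cancelling the count. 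As $\phi$ is arbitrary this yields $\nu^{\mathrm{rt}}=\nu^{\mathrm{ch}}$. The delicate point is the bookkeeping of the two degree factors, together with the observation that the \emph{unordered} neighborhood marks $\lan Y_{N_{\myroot}(\tree)}[k]\ran$ are invariant under the child-permutations used in the exchangeability step, so only the singled-out neighbor's mark is permuted.

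Finally I would conclude: since $\nu^{\mathrm{rt}}=\nu^{\mathrm{ch}}$, their disintegrations over the first two coordinates agree $m$-a.e., so $\Xi_k(s,t)=\Psi(s,t)$ for $m$-a.e.\ $(s,t)$; evaluating at $(X_v[k],X_{\myroot}[k])$, whose restricted law is $m$, and using the identification of the right-hand side of \eqref{eq:property-UGW1} with $\Psi(X_v[k],X_{\myroot}[k])$ on $\{v \in \tree\}$ completes the proof. Positivity of the tilt on $\{1 \in \tree\}$, where $|N_{\myroot}(\tree)|,|N_1(\tree)| \ge 1$, ensures the ratio defining $\Xi_k$ is well defined $m$-a.e.
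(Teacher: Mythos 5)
Your proposal is correct and rests on the same two pillars as the paper's proof: the mass-transport identity \eqref{def:unimodularity} applied to the unimodular marked tree from Lemma \ref{lem:unimodular}, combined with leaf exchangeability to collapse the sum over children to the child-$1$ representative, yielding exactly the paper's key identity \eqref{eq:unimod-key1} (your transport function is the mirror image of the paper's $H_k$, with the degree weight placed on the receiving rather than the sending vertex, which merely swaps the two sides of the identity). Where you diverge is in how the conclusion is extracted: you package \eqref{eq:unimod-key1} as the equality of two finite measures $\nu^{\mathrm{rt}}=\nu^{\mathrm{ch}}$ and read off $\Xi_k=\Psi$ by uniqueness of disintegrations, whereas the paper applies the identity a second time (with $h\equiv 1$ and $g$ replaced by $g\cdot\varphi_h$) to derive $\E[h\,|\,X_{\myroot}[k],X_1[k]]=\varphi_h(X_1[k],X_{\myroot}[k])\varphi_1(X_{\myroot}[k],X_1[k])$ and then uses the relation $\varphi_1(X_1[k],X_{\myroot}[k])\varphi_1(X_{\myroot}[k],X_1[k])=1$ to produce the ratio. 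Your route is cleaner and avoids this second pass; it also bypasses the conditional form of the exchangeability statement (the paper's Lemma \ref{lem:GW-exchangeable}, which needs Proposition \ref{prop:properties-GW}(ii) for general $h$), since inside the outer expectation only the unconditional collapse is needed. One small inaccuracy to fix: the law of $(X_v[k],X_{\myroot}[k])$ restricted to $\{v\in\tree\}$ is \emph{not} equal to $m$ for $v\ge 2$ (its total mass is $\Pmb(|N_{\myroot}(\tree)|\ge v)<\Pmb(1\in\tree)$ in general); what exchangeability actually gives is that it is dominated by $m$, which is all you need for the $m$-a.e.\ identity $\Xi_k=\Psi$ to transfer to $\{v\in\tree\}$.
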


The proof of Proposition \ref{prop:property-UGW} is preceded by another preparatory lemma, which is a fairly straightforward consequence of Proposition \ref{prop:properties-GW}(ii) and Lemma \ref{lem:exch}. 
Its proof, along with that of Proposition \ref{prop:property-UGW}, are essentially identical to the proofs of Lemma 7.1 and Proposition 3.18 in \cite{LacRamWu-diffusion}, but are less involved.
Therefore these are provided in Appendix \ref{sec:pf-property-UGW} for the sake of completeness and ease of exposition.

\subsection{Proof of Theorem \ref{thm:localequations-UGW}}
\label{sec:pf-UGW}

We are now ready to prove Theorem \ref{thm:localequations-UGW}, following an induction argument with the help of Proposition \ref{prop:property-UGW}. Recall that $Y_v[k] := (1_{\{v \in \tree\}}, X_v[k])$, and define $\Ybar_v[k] := (1_{\{v \in \bar{\tree}\}}, \Xbar_v[k])$ similarly.

\begin{proof}
Our goal is to prove
\begin{equation} \label{eq:localeqn-UGW-goal}
\Lmc(Y_{\V_1}[k],|N_1(\tree)|) = \Lmc(\Ybar_{\V_1}[k],\Nhat(k)), \quad k \in \N_0.
\end{equation}
This is true for $k=0$ by construction, so let us assume \eqref{eq:localeqn-UGW-goal} holds for some $k \in \N_0$ and proceed by induction.
Write $\Ybd = Y_{\V_1}$, $\bar{\Ybd} = \Ybar_{\V_1}$.
Fix a tree $t \subset \V_1$.
Recalling that $\bar{\tree} \stackrel{d}{=} \tree \cap \V_1$ by construction, it suffices to show
\begin{equation*}
\Lmc(\bar{\Ybd}[k+1],\Nhat(k+1) \,|\, \bar{\tree} = t) = \Lmc(\Ybd[k+1],|N_1(\tree)| \,|\, \tree \cap \V_1 = t).
\end{equation*}
From the evolution of $\Xbar_v(k+1)$ and $X_v(k+1)$ we see that it suffices to show 
\begin{equation*}
\Lmc( \bar{\Ybd}[k], (\Zbar^k_v)_{v \in t \cap \N}, \Nhat(k+1) \,|\, \bar{\tree}=t) 
	= \Lmc(\Ybd[k], (\lan X_{N_v(\tree)}[k] \ran)_{v \in t \cap \N}, |N_1(\tree)| \mid \tree \cap \V_1=t).
\end{equation*}
Note that $|N_1(\tree)| = |\lan X_{N_1(\tree)}\ran|$ by Condition \ref{cond:UGW}(5), and $\Nhat(k+1)=|\Zbar^k_1|$. Hence, it suffices to show
\begin{equation*}
	\Lmc\big(\bar{\Ybd}[k], (\Zbar^k_v)_{v \in t \cap \N} \,|\, \bar{\tree}=t\big) = \Lmc\big(\Ybd[k], (\lan X_{N_v(\tree)}[k] \ran)_{v \in t \cap \N} \,|\, \tree \cap \V_1=t\big).
\end{equation*}
It then suffices to show that for bounded measurable functions $f_{\myroot}$ on $(\PolishX^{k+1})^{\V_1}$ and $(f_v)_{v \in t \cap \N}$ on $\SQ(\PolishX^{k+1})$,
\begin{equation*}
	\Emb \Big[ f_{\myroot}(\bar{\Ybd}[k]) \prod_{v \in t} f_v(\Zbar^k_v) \,\Big|\, \bar{\tree}=t \Big] = \Emb \Big[ f_{\myroot}(\Ybd[k]) \prod_{v \in t} f_v(\lan X_{N_v(\tree)}[k] \ran) \,\Big|\, \tree \cap \V_1=t \Big]. 
\end{equation*}

Now, for bounded measurable functions $h$ on $\SQ(\PolishX^{k+1})$, we may use the definition of $\bar{\cmeas}[k]$ along with Proposition \ref{prop:property-UGW} to get, for $x_{\myroot},x_1 \in \PolishX^{k+1}$ with $x_{\myroot} \neq \cem$,
\begin{align*}
\int h \,d\bar{\cmeas}[k](\cdot \,|\, x_{\myroot},x_1) &= \frac{\Emb\left[\frac{|N_{\myroot}(\bar{\tree})|}{\Nhat(k)} h(\lan \Xbar_{N_{\myroot}(\tree)}\ran) \,\big|\, \Xbar_{\myroot}[k]=x_{\myroot},\Xbar_1[k]=x_1\right]}{\Emb\left[\frac{|N_{\myroot}(\bar{\tree})|}{\Nhat(k)} \,\big|\, \Xbar_{\myroot}[k]=x_{\myroot},\Xbar_1[k]=x_1\right]} \\
	&= \frac{\Emb\left[\frac{|N_{\myroot}(\tree)|}{|N_1(\tree)|} h(\lan X_{N_{\myroot}(\tree)}[k]\ran) \,\big|\, X_{\myroot}[k]=x_{\myroot},X_1[k]=x_1\right]}{\Emb\left[\frac{|N_{\myroot}(\tree)|}{|N_1(\tree)|} \,\big|\, X_{\myroot}[k]=x_{\myroot},X_1[k]=x_1\right]} \\
	&= \E\big[h(\lan X_{N_v(\tree)}[k]\ran) \,|\, X_{\myroot}[k]=x_1,X_v[k]=x_{\myroot}\big].
\end{align*}
In other words, it holds a.s.\ on the event $\{v \in \tree\}=\{X_v[k] \neq \cem\}$ that
\begin{equation}
	\int h \,d\bar{\cmeas}[k](\cdot \,|\, X_v[k],X_{\myroot}[k]) = \E\big[h(\lan X_{N_v(\tree)}[k]\ran) \,|\, X_{\myroot}[k],X_v[k]\big]. \label{eq:localeqn-UGW-2}
\end{equation}
Now let $f_{\myroot}$ and $(f_v)_{v \in t \cap \N}$ be as before, bounded measurable functions  on $(\PolishX^{k+1})^{\V_1}$ and  $\SQ(\PolishX^{k+1})$, respectively. Then, since $\{\bar{\tree}=t\}$ is $\bar{\Ybd}[k]$-measurable,
\begin{align*}
 \Emb &\left[ f_{\myroot}(\bar{\Ybd}[k]) \prod_{v \in t \cap \N} f_v( \Zbar^k_v) \,\Big|\, \bar{\tree}=t \right] \\
		& = \Emb \left[ f_{\myroot}(\bar{\Ybd}[k]) \Emb \left[ \prod_{v \in t \cap \N} f_v(\Zbar^k_v) \,\Big|\, \bar{\tree}=t, \bar{\Ybd}[k] \right] \,\Big|\, \bar{\tree}=t \right] \\
		& = \Emb \left[ f_{\myroot}(\bar{\Ybd}[k]) \prod_{v \in t \cap \N} \int \tilde{f}_v \,d\bar{\cmeas}[k]( \cdot \,|\, \Xbar_v[k],\Xbar_{\myroot}[k]) \,\Big|\, \bar{\tree}=t \right] \\
		& = \Emb \left[ f_{\myroot}(\Ybd[k]) \prod_{v \in t \cap \N} \int \tilde{f}_v \,d\bar{\cmeas}[k]( \cdot \,|\, X_v[k],X_{\myroot}[k]) \,\Big|\, \tree \cap \V_1 =t \right] \\
		& = \Emb \left[ f_{\myroot}(\Ybd[k]) \prod_{v \in t \cap \N} \Emb \left[ f_v(\lan X_{N_v(\tree)}[k] \ran) \mid X_v[k],X_{\myroot}[k] \right] \,\Big|\, \tree \cap \V_1 = t \right] \\
		& = \Emb \left[ f_{\myroot}(\Ybd[k]) \Emb \left[ \prod_{v \in t \cap \N} f_v(\lan X_{N_v(\tree)}[k] \ran) \,\Big|\, \tree \cap \V_1 = t, \Ybd[k] \right] \,\Big|\, \tree \cap \V_1 = t \right] \\
		& = \Emb \left[ f_{\myroot}(\Ybd[k]) \prod_{v \in t \cap \N} f_v(\lan X_{N_v(\tree)}[k] \ran) \,\Big|\, \tree \cap \V_1 = t \right].
	\end{align*}
	Here the second equality uses joint conditional distribution of $(\Zbar^k_{vu})_{u\in\N}$ given by \eqref{def:ZbardistUGW} and some trivial notational changes,
	the third follows from induction, 
	the fourth uses \eqref{eq:localeqn-UGW-2},
	and the last two use Proposition \ref{prop:properties-GW}(i) (and the fact that $Y_v[k]$ is $X_v[k]$-measurable).
	This gives \eqref{eq:localeqn-UGW-goal} for $k+1$ and completes the proof by induction.
\end{proof}

\appendix

\section{Proofs of lemmas on conditional independence}
\label{se:appA}

Here we provide the 
proofs of Lemma \ref{le:conditionalindependence1}. 
and Lemma \ref{lem:cond_indep_fact_1}. 
Recall that Lemma \ref{lem:cond_indep_fact_1} states that if 
$X \perp Y \mid Z$, then 
\[ (X,\phi(X,Z)) \perp (Y,\psi(Y,Z)) \mid Z \]
for any measurable functions $\phi$ and $\psi$. 

\begin{proof}[Proof of Lemma \ref{lem:cond_indep_fact_1}]
	To show the last display it suffices to prove that for every bounded and measurable functions $f,g,h$, one has
	\begin{equation}
		\label{eq:cond_indep_fact_1}
		\Emb [f(X,\phi(X,Z)) g(Y,\psi(Y,Z)) h(Z)] = \Emb[ \Emb[ f(X,\phi(X,Z)) \mid Z] g(Y,\psi(Y,Z)) h(Z)].
	\end{equation}
	In turn, by standard approximation arguments, it  suffices to prove \eqref{eq:cond_indep_fact_1} when $f(X,\phi(X,Z)) = \ftil_1(X) \ftil_2(Z)$ for some bounded  measurable functions $\ftil_1,\ftil_2$.
	To this end, note that since $X \perp Y \mid Z$, we have 
	\begin{align*}
		\mbox{ RHS of } \eqref{eq:cond_indep_fact_1} & = \Emb[ \Emb[ \ftil_1(X) \ftil_2(Z) \mid Z] g(Y,\psi(Y,Z)) h(Z)] \\
		& = \Emb[ \Emb[ \ftil_1(X) \mid Z] \ftil_2(Z) g(Y,\psi(Y,Z)) h(Z)] \\
		& = \Emb[ \Emb[ \ftil_1(X) \mid Y,Z] \ftil_2(Z) g(Y,\psi(Y,Z)) h(Z)] \\
		& = \Emb[ \ftil_1(X) \ftil_2(Z) g(Y,\psi(Y,Z)) h(Z)] = \mbox{ LHS of \eqref{eq:cond_indep_fact_1}}.
	\end{align*}
	This proves the result. 
\end{proof}

We now establish an intermediate result in preparation of the proof of Lemma \ref{le:conditionalindependence1}. 

\begin{Lemma}
	\label{lem:cond_indep_fact_2}
	If $X \perp Y \mid Z$, then $X \perp Y \mid (Z,\psi(Y,Z))$ for every measurable function $\psi$.
	Furthermore, $X \perp Y \mid (Z,\psi(Y,Z),\phi(X,Z))$ for any measurable functions $\phi$ and $\psi$.
\end{Lemma}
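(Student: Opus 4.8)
The plan is to establish the first assertion and then deduce the second from it by exploiting the symmetry of conditional independence. For the first assertion, I would work with the one-sided characterization: $X \perp Y \mid W$ holds if and only if $\Emb[f(X) \mid Y, W] = \Emb[f(X) \mid W]$ almost surely for every bounded measurable $f$. Taking $W := (Z, \psi(Y,Z))$ and noting the identity of $\sigma$-algebras $\sigma(Y, W) = \sigma(Y, Z, \psi(Y,Z)) = \sigma(Y,Z)$, the hypothesis $X \perp Y \mid Z$ gives at once $\Emb[f(X) \mid Y, W] = \Emb[f(X) \mid Y, Z] = \Emb[f(X) \mid Z]$. Hence the first assertion reduces to the single identity $\Emb[f(X) \mid Z, \psi(Y,Z)] = \Emb[f(X) \mid Z]$.

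This identity is the one genuinely substantive step. Since $\Emb[f(X)\mid Z]$ is $\sigma(Z)$-measurable, hence $\sigma(Z,\psi(Y,Z))$-measurable, it suffices to verify that $\Emb[f(X)\, h(Z,\psi(Y,Z))] = \Emb[\Emb[f(X)\mid Z]\, h(Z,\psi(Y,Z))]$ for every bounded measurable $h$. Writing $\tilde h(Y,Z) := h(Z,\psi(Y,Z))$, I would condition on $Z$ and use that $f(X)$ and $\tilde h(Y,Z)$ are conditionally independent given $Z$ (immediate from $X \perp Y \mid Z$, since $f(X)$ is $\sigma(X)$-measurable and $\tilde h(Y,Z)$ is $\sigma(Y,Z)$-measurable) to get $\Emb[f(X)\tilde h(Y,Z)\mid Z] = \Emb[f(X)\mid Z]\,\Emb[\tilde h(Y,Z)\mid Z]$; taking expectations and applying the tower property then yields the desired equality. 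This is the same flavor of test-function manipulation already used in the proof of Lemma \ref{lem:cond_indep_fact_1}.

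For the second assertion, I would put $Z' := (Z,\psi(Y,Z))$, so that the first assertion already provides $X \perp Y \mid Z'$. Because conditional independence is symmetric in its two arguments, the first assertion applied with the roles of $X$ and $Y$ interchanged states that $X \perp Y \mid Z$ implies $X \perp Y \mid (Z, \phi(X,Z))$ for any measurable $\phi$; that is, one may enlarge the conditioning by a measurable function of the \emph{first} variable together with the conditioning variable. Applying this interchanged version to $X \perp Y \mid Z'$, and observing that since $Z$ is a coordinate of $Z'$ the given function $\phi(X,Z)$ is a measurable function of $(X,Z')$, I obtain $X \perp Y \mid (Z', \phi(X,Z)) = (Z, \psi(Y,Z), \phi(X,Z))$, which is exactly the second assertion.

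The only real obstacle is the identity $\Emb[f(X)\mid Z,\psi(Y,Z)] = \Emb[f(X)\mid Z]$ of the second paragraph; the rest is bookkeeping with $\sigma$-algebras together with the symmetry of conditional independence. Minor care is needed to confirm that the one-sided characterization of conditional independence used throughout is equivalent to the definition, and that all the stated equalities of conditional expectations are understood in the almost-sure sense.
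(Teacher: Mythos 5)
Your proposal is correct and follows essentially the same route as the paper: both arguments reduce the first assertion to the key identity $\Emb[f(X)\mid Z,\psi(Y,Z)]=\Emb[f(X)\mid Z]$ (the paper establishes it via the test functions $g_1(Y)g_2(Z)g_3(\psi(Y,Z))$ and the tower property, you via the factorization of $\Emb[f(X)\tilde h(Y,Z)\mid Z]$, which is the same computation), and both obtain the second assertion by setting $\Ztil=(Z,\psi(Y,Z))$ and reapplying the first assertion with the roles of $X$ and $Y$ interchanged, noting that $\phi(X,Z)$ is a measurable function of $(X,\Ztil)$. The only cosmetic difference is that you organize the argument around the one-sided characterization of conditional independence, whereas the paper verifies the defining product identity directly.
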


\begin{proof}
	 For any bounded  measurable functions $f,g_1,g_2,g_3$, using first $X \perp Y \mid Z$,  then the  tower property,  we have 
	\begin{align*}
		\Ebf[ \Ebf[ f(X) \mid Z] g_1(Y) g_2(Z) g_3(\psi(Y,Z))] & = \Ebf [ \Ebf[f(X) \mid Y,Z] g_1(Y) g_2(Z) g_3(\psi(Y,Z))] \\
		& = \Ebf [f(X) g_1(Y) g_2(Z) g_3(\psi(Y,Z))] \\ 
        &= \Ebf [\Ebf [f(X) \mid Y, Z, \phi(Y,Z)] g_1(Y) g_2(Z) g_3(\psi(Y,Z)].
	\end{align*}
	By standard approximation arguments, it follows that  $\Ebf[f(X) \mid Z] = \Ebf[f(X) \mid Y,Z,\psi(Y,Z)]$ a.s. for each bounded measurable function $f$.
	By the tower property, the last equality also implies that $\Ebf[f(X) \mid Z] = \Ebf[f(X) \mid Y,Z,\psi(Y,Z)] = \Ebf[f(X) \mid Z,\psi(Y,Z)]$ a.s. for each bounded measurable function $f$.
	Therefore given any  bounded  measurable functions $f,g,h$, we have
	\begin{align*}
		\Ebf[ \Ebf[ f(X) \mid Z,\psi(Y,Z)] g(Y) h(Z,\psi(Y,Z))] & = \Ebf[ \Ebf[ f(X) \mid Y,Z,\psi(Y,Z)] g(Y) h(Z,\psi(Y,Z))] \\
		& = \Ebf [f(X) g(Y) h(Z,\psi(Y,Z))].
	\end{align*}
	This proves the first assertion of the lemma. 
    
	The second assertion follows from the first on  observing that $(Z,\phi(X,Z),\psi(Y,Z)) = (\Ztil,\phitil(X,\Ztil))$, where $\Ztil = (Z,\psi(Y,Z))$ for some measurable function $\phitil$.
\end{proof}

We can now wrap up the proof of Lemma \ref{le:conditionalindependence1}. 

\begin{proof}[Proof of Lemma \ref{le:conditionalindependence1}] 
	The fact that 
    $X \perp Y|(Z, \psi(Y,Z), \phi(X,Z))$ 
    follows immediately 
    from 
Lemma \ref{lem:cond_indep_fact_2}. 
Setting $\psi$ to be a constant and $\tilde{Z} := (Z, \phi(X,Z))$ this in particular implies that 
$X \perp Y |\tilde{Z}$. 
Lemma \ref{lem:cond_indep_fact_1} 
then implies that for any measurable $\tilde{\psi},$ 
$X \perp (Y, \tilde{\psi}(Y, \tilde{Z})| \tilde{Z}.$
Since for any measurable $\psi$, there exists 
another measurable $\tilde{\psi}$  such that  
$\psi(Y,Z) = \tilde{\psi}(Y, \tilde{Z})$, it follows that $X \perp \psi (Y,Z)| (Z, \phi(X,Z)),$ which is  equivalent to the first relation in  the display in Lemma \ref{le:conditionalindependence1}. 
The second relation in the display follows in an exactly analogous fashion. 
\end{proof}

\section{Proof of Proposition \ref{thm:cond_consistency}}
\label{sec:pf-cond_consistency}

In this section we prove Proposition  \ref{thm:cond_consistency} using an inductive argument,  with the base case covered by \eqref{consistency-time0} of Proposition \ref{thm:cond_consistency}. 
The idea is to build $\Lmc(X^H_A[k+1] \, | \, X^H_{\partial^2A}[k+1] = x_{\partial^2 A}[k+1])$ from $\Lmc(X^H_A[k] \, | \, X^H_{\partial^2A}[k] = x_{\partial^2 A}[k])$ in several steps and to check at each step that the operations involved are independent of the particular choice of subgraph $H$. In the following, when we write ``does not depend on $H$", we mean subject to the restriction that $A \cup \partial^2A \subset V_H \subset V$ and \eqref{def:F-consistency} holds.
	\begin{enumerate}[(a)]
	\item 
		As our induction hypothesis, we  assume that the kernel
		\begin{equation}
		\Lmc\left(X^H_A[k] \, | \, X^H_{\partial^2A}[k] = x_{\partial^2 A}[k]\right) \label{pf:conditionalspecification-step0}
		\end{equation}
		does not depend on $H$.
	\item 
		We claim that then 
		\begin{equation}
			\Lmc\left(X^H_A[k],\xi_{A \cup \partial A}(k+1) \, | \, X^H_{\partial^2A}[k] = x_{\partial^2 A}[k]\right) \label{pf:conditionalspecification-step1}
		\end{equation}
		does not depend on $H$. 
		This holds because $\xi_{A \cup \partial A}(k+1)$ and $({X^H_A}[k],{X^H_{\partial^2A}}[k])$ are independent, and thus   \eqref{pf:conditionalspecification-step1} is simply the product of the measure \eqref{pf:conditionalspecification-step0} with $\Lmc(\xi_{A \cup \partial A}(k+1))$.
	\item 
		We next claim that 
		\begin{equation}
			\Lmc\left( {X^H_A}[k],\xi_{A \cup \partial A}(k+1),X^H_{\partial A}(k+1) \,|\, {X^H_{\partial^2A}}[k] = x_{\partial^2 A}[k]\right) \label{pf:conditionalspecification-step2}
		\end{equation}
		does not depend on $H$. This follows from the fact that there exists a measurable function $\Psi \colon (\PolishX^{k+1})^{\partial^2A} \times (\PolishX^{k+1})^{A} \times \Nspace^{\partial A} \rightarrow \PolishX^{\partial A}$, independent of the choice of $H$, such that
		\begin{equation*}
			X^H_{\partial A}(k+1) = \Psi(X^H_{\partial^2 A}[k],X^H_{A}[k],\xi_{\partial A}(k+1)).
		\end{equation*}
		Using this map, we then obtain the measure \eqref{pf:conditionalspecification-step2} as an image measure from \eqref{pf:conditionalspecification-step1}.
	\item By conditioning the (random joint) measure of \eqref{pf:conditionalspecification-step2}, which was shown to be insensitive to $H$, on $X^H_{\partial A}(k+1)$, we find that
		\begin{equation}
			\Lmc\left( {X^H_A}[k],\xi_{A \cup \partial A}(k+1) \,|\, X^H_{\partial A}(k+1) = x_{\partial A}(k+1), {X^H_{\partial^2A}}[k] = x_{\partial^2 A}[k]\right) \label{pf:conditionalspecification-step3}
		\end{equation}
		does not depend on $H$.
	\item 
		Marginalizing the measure in \eqref{pf:conditionalspecification-step3}, we find that the following law does not depend on $H$: 
		\begin{equation}
			\Lmc\left( {X^H_A}[k],\xi_A(k+1) \,|\, X^H_{\partial A}(k+1) = x_{\partial A}(k+1), {X^H_{\partial^2A}}[k] = x_{\partial^2 A}[k]\right). \label{pf:conditionalspecification-step4}
		\end{equation} 
	\item It follows from \eqref{pf:main1}  that the conditional law 
		\begin{equation}
			\Lmc\left( {X^H_A}[k],\xi_A(k+1) \,|\,  {X^H_{\partial^2A}}[k+1] = x_{\partial^2 A}[k+1]\right) \label{pf:conditionalspecification-step5}
		\end{equation}
		is equal to the kernel in \eqref{pf:conditionalspecification-step4}, and thus is still insensitive to the choice of $H$.
	\item Finally, we claim that the conditional law 
		\begin{equation}
			\Lmc\left( {X^H_A}[k+1] \,|\,  {X^H_{\partial^2A}}[k+1] = x_{\partial^2 A}[k+1]\right) \label{pf:conditionalspecification-step6}
		\end{equation}
		does not depend on $H$. This follows from the fact that there exists a measurable function $\Phi \colon (\PolishX^{k+1})^{A} \times \Nspace^{A} \times (\PolishX^{k+1})^{\partial A} \rightarrow (\PolishX^{k+2})^{A}$, independent of the choice of subgraph $H$, such that
		\begin{equation*}
			{X^H_A}[k+1] = \Phi({X^H_A}[k], \xi_A(k+1), X^H_{\partial A}[k]).
		\end{equation*}
		Using this map, the measure in \eqref{pf:conditionalspecification-step6} is then obtained as an image measure from \eqref{pf:conditionalspecification-step5}.
	\end{enumerate}
	Therefore the statement holds by induction. \qed

\section{Proof of Proposition \ref{prop:property-UGW}}
\label{sec:pf-property-UGW}

In this section we prove Proposition \ref{prop:property-UGW}.

\begin{Lemma} \label{lem:GW-exchangeable}
Suppose Condition \ref{cond:UGW} holds.
For each $k \in \N_0$ and each bounded measurable function $h \colon (\PolishX^{k+1})^2 \times \SQ(\PolishX^{k+1})^2 \to \Rmb$, it holds a.s.\ on the event $\{N_{\myroot}(\tree) \neq \emptyset\}$ that 
	\begin{align*}
		\E & \left[\left. \frac{1}{|N_{\myroot}(\tree)|} \sum_{u \in N_{\myroot}(\tree)} h(X_{\myroot}[k],X_u[k],\lan X_{N_{\myroot}(\tree)}[k] \ran,\lan X_{N_u(\tree)}[k] \ran) \, \right| \, X_{\myroot}[k], \, \lan X_{N_{\myroot}(\tree)}[k]\ran\right] \\
		& = \E\left[\left. h(X_{\myroot}[k],X_1[k],\lan X_{N_{\myroot}(\tree)}[k] \ran,\lan X_{N_1(\tree)}[k] \ran) \, \right| \, X_{\myroot}[k], \, \lan X_{N_{\myroot}(\tree)}[k]\ran\right].
	\end{align*}
\end{Lemma}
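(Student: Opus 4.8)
The plan is to prove the identity in its equivalent integrated form. Since both sides of the claimed equation are conditional expectations given $\mathcal{G} := \sigma(X_{\myroot}[k], \lan X_{N_{\myroot}(\tree)}[k]\ran)$, and since $\{N_{\myroot}(\tree)\neq\emptyset\}$ is $\mathcal{G}$-measurable (the unordered collection $\lan X_{N_{\myroot}(\tree)}[k]\ran$ is the empty sequence exactly when $N_{\myroot}(\tree)=\emptyset$, and its length recovers $|N_{\myroot}(\tree)|$), it suffices to show that for every bounded measurable $G \colon \PolishX^{k+1} \times \SQ(\PolishX^{k+1}) \to \Rmb$,
\begin{equation*}
\E\left[\Big(\tfrac{1}{|N_{\myroot}(\tree)|}\!\!\sum_{u \in N_{\myroot}(\tree)}\!\! h_u - h_1\Big)\, G\big(X_{\myroot}[k], \lan X_{N_{\myroot}(\tree)}[k]\ran\big)\, 1_{\{N_{\myroot}(\tree)\neq\emptyset\}}\right] = 0,
\end{equation*}
where $h_u := h(X_{\myroot}[k],X_u[k],\lan X_{N_{\myroot}(\tree)}[k]\ran,\lan X_{N_u(\tree)}[k]\ran)$. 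First I would decompose this expectation over the events $\{|N_{\myroot}(\tree)|=n\}$ for $n \geq 1$, so that it is enough to prove $\E[h_u\, G\, 1_{\{|N_{\myroot}(\tree)|=n\}}] = \E[h_1\, G\, 1_{\{|N_{\myroot}(\tree)|=n\}}]$ for each fixed $n$ and each $u \in \{1,\ldots,n\}$; summing the $n$ identical terms and dividing by $n$ then collapses the average to the $u=1$ term.

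On the event $\{|N_{\myroot}(\tree)|=n\}$, in the Ulam-Harris-Neveu labeling we have $N_{\myroot}(\tree)=\{1,\ldots,n\}$, and I would identify every quantity appearing as a measurable functional of the root trajectory $Y_{\myroot}[k]$ together with the $n$ subtree-blocks $(Y_{uv}[k])_{v \in \V}$, $u=1,\ldots,n$. The crucial structural observation is how the arguments of $h_u$ and $G$ depend on these blocks: the factor $G(X_{\myroot}[k],\lan X_{N_{\myroot}(\tree)}[k]\ran)$ and the third argument $\lan X_{N_{\myroot}(\tree)}[k]\ran = \lan (X_w[k])_{w=1}^n\ran$ of $h_u$ depend only on $Y_{\myroot}[k]$ and on the unordered collection of the $v=\myroot$ components of the blocks, hence are \emph{invariant} under permuting the blocks; whereas the second argument $X_u[k]$ and the fourth argument $\lan X_{N_u(\tree)}[k]\ran = \lan X_{\myroot}[k],(X_{uv}[k])_{uv \in \tree}\ran$ of $h_u$ are determined by $Y_{\myroot}[k]$ and the single $u$-block.

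Then I would apply the leaf-exchangeability of Lemma \ref{lem:exch} with the transposition $\pi=(1\,u)$ acting on $\{1,\ldots,n\}$. Because the whole integrand $h_u\, G\, 1_{\{|N_{\myroot}(\tree)|=n\}}$ is a fixed measurable functional of $(Y_{\myroot}[k],(Y_{1v}[k])_v,\ldots,(Y_{nv}[k])_v)$ in which swapping block $1$ and block $u$ sends $h_u \mapsto h_1$ (using $\pi(u)=1$) while leaving $G$ and the third argument untouched, the distributional equality supplied by Lemma \ref{lem:exch} yields $\E[h_u\, G\, 1_{\{|N_{\myroot}(\tree)|=n\}}] = \E[h_1\, G\, 1_{\{|N_{\myroot}(\tree)|=n\}}]$, as required. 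Summing over $n \geq 1$ gives the integrated identity on $\{N_{\myroot}(\tree)\neq\emptyset\}$ and hence the lemma. The main obstacle is the careful bookkeeping in this last step: one must verify precisely that under the lifted permutation $\pi$ of Lemma \ref{lem:exch} the fourth argument $\lan X_{N_u(\tree)}[k]\ran$ transforms into $\lan X_{N_1(\tree)}[k]\ran$ (noting that $N_u(\tree)$ comprises the parent $\myroot$ together with $u$'s children, all carried by the $u$-block), while confirming the $\pi$-invariance of both $G$ and the third argument, so that the exchangeability is applied to a genuinely block-permutation-equivariant functional.
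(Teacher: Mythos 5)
Your proof is correct, but it takes a genuinely different route from the paper's. The paper argues in two stages: it first establishes the identity for the special case $h(z_1,z_2,x_1,x_2)=f(z_1,z_2)$, using only the exchangeability of the first-generation values $(X_{\myroot}[k],X_1[k],\ldots,X_n[k])$ given $\{|N_{\myroot}(\tree)|=n\}$ from Lemma \ref{lem:exch}; it then reduces general $h$ to this case by conditioning on $X_{\V_1}[k]$ and integrating out the fourth argument $\lan X_{N_u(\tree)}[k]\ran$ against the vertex-independent kernel $\Phi_k(X_u[k],X_{\myroot}[k])$ furnished by Proposition \ref{prop:properties-GW}(ii), with Proposition \ref{prop:properties-GW}(i) justifying the passage from conditioning on $(X_u[k],X_{\myroot}[k])$ to conditioning on all of $Y_{\V_1}[k]$. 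You instead treat general $h$ in one step by using the full strength of Lemma \ref{lem:exch}: that lemma asserts exchangeability of the entire subtree blocks $(Y_{uv}[k])_{v\in\V}$, not merely of their root components, so the transposition $(1\,u)$ sends $h_u$ to $h_1$ while fixing $G$ and the third argument, and the identity $\E[h_u\,G\,1_{\{|N_{\myroot}(\tree)|=n\}}]=\E[h_1\,G\,1_{\{|N_{\myroot}(\tree)|=n\}}]$ follows directly; averaging over $u$ and summing over $n$ finishes. Your bookkeeping does check out: $N_u(\tree)=\{\myroot\}\cup\{uj\in\tree:j\in\N\}$ is carried by $Y_{\myroot}[k]$ together with the $u$-th block (whose indicator components identify which children are present), while $\lan X_{N_{\myroot}(\tree)}[k]\ran$ is a symmetric function of the blocks and hence invariant under the swap. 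Your argument is shorter and, notably, never invokes the conditional independence structure of Proposition \ref{prop:properties-GW}, so it runs under nominally weaker hypotheses; what the paper's two-stage decomposition buys is that it isolates the kernel $\Phi_k$ and parallels the analogous proof for diffusions in \cite{LacRamWu-diffusion}, at the cost of a longer chain of conditionings.
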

\begin{proof}
	Fix $k \in \N_0$. 
	Throughout this proof we will omit the argument $[k]$ for the sake of readability, with the understanding that every appearance of $X_v$ below should be written more precisely as $X_v[k]$.
	We first prove the claim assuming that $h$ has the following form: there exists a bounded measurable function $f \colon (\PolishX^{k+1})^2 \to \Rmb$ such that
	\begin{equation}
		\label{eq:GW-exchangeable-form1}
		h(z_1,z_2,x_1,x_2) = f(z_1,z_2), \quad z_1,z_2 \in \PolishX^{k+1}, \quad x_1,x_2 \in \SQ(\PolishX^{k+1}).
	\end{equation}	
	Fix $n \in \N$.
	By Lemma \ref{lem:exch},
	\[
	\Lmc((X_{\myroot},X_1,\ldots,X_n) \,|\, |N_{\myroot}(\tree)|=n) = \Lmc((X_{\myroot},X_{\pi(1)},\ldots,X_{\pi(n)}) \,|\, |N_{\myroot}(\tree)|=n)
	\]
	for any permutation $\pi$ of $\{1,\dotsc,n\}$.
	Hence
	\[
	\Lmc((X_{\myroot},X_1) \,|\, X_{\myroot} , \, \lan X_{N_{\myroot}(\tree)} \ran, \, |N_{\myroot}(\tree)|=n) = \Lmc((X_{\myroot},X_u) \,|\, X_{\myroot} , \, \lan X_{N_{\myroot}(\tree)} \ran, \, |N_{\myroot}(\tree)|=n)
	\]
	for each $u=1,\dotsc,n$, and we have
	\begin{align*}
		\frac{1}{n} \sum_{u=1}^n f(X_{\myroot} ,X_u )
		& = \E\Bigg[ \frac{1}{n} \sum_{u =1}^nf(X_{\myroot} ,X_u ) \, \Bigg| \, X_{\myroot} , \, \lan X_{N_{\myroot}(\tree)} \ran, \, |N_{\myroot}(\tree)|=n\Bigg] \\
		& = \E\big[ f(X_{\myroot} ,X_1 ) \, \big| \, X_{\myroot} , \, \lan X_{N_{\myroot}(\tree)} \ran, \, |N_{\myroot}(\tree)|=n\big].
	\end{align*}
	In other words, it holds a.s.\ on $\{1 \in \tree\} = \{N_{\myroot}(\tree) \ne \emptyset\}$ that
	\begin{equation*}
		\frac{1}{|N_{\myroot}(\tree)|} \sum_{u \in N_{\myroot}(\tree)}f(X_{\myroot} ,X_u ) = \E\big[ f(X_{\myroot},X_1) \, \big| \, X_{\myroot} , \, \lan X_{N_{\myroot}(\tree)} \ran, \, |N_{\myroot}(\tree)| \big].
	\end{equation*}
	Because $|N_{\myroot}(\tree)|$ is a.s.\ $\lan X_{N_{\myroot}(\tree)} \ran$-measurable (recall Remark \ref{re:treemeasurable}), this implies
	\begin{equation*}
		\frac{1}{|N_{\myroot}(\tree)|} \sum_{u \in N_{\myroot}(\tree)}f(X_{\myroot} ,X_u ) = \E\big[ f(X_{\myroot},X_1) \, \big| \, X_{\myroot} , \, \lan X_{N_{\myroot}(\tree)} \ran \big],
	\end{equation*}
	again on the event $\{1 \in \tree\}$.
	Thus, the proof is complete for $h$ of the form \eqref{eq:GW-exchangeable-form1}.

	We now prove the claim for general $h$. 
 First note that a consequence of Proposition  \ref{prop:properties-GW}(ii) is that there exists a measurable function $\Phi_k \colon ( \PolishX^{k+1})^2 \rightarrow \P(\SQ(\PolishX^{k+1}))$ such that, for every $v \in \N$,
	\[
	\Phi_k(X_u , X_{\myroot} ) = \Lmc\left( \lan X_{N_u(\tree)} \ran \, | \, X_u , X_{\myroot}  \right), \quad \mbox{ a.s., on } \{u \in \tree\}.
	\]
	Using the conditional independence properties established in Proposition \ref{prop:properties-GW}(i), we have also 
	\begin{equation} \label{eq:GW-exchangeable-3}
		\Phi_k(X_u , X_{\myroot} ) = \Lmc\left(  \lan X_{N_u(\tree)} \ran \, | \, Y_{\V_1}  \right), \quad \mbox{ a.s., on } \{u \in \tree\},
	\end{equation}
	where as usual $Y_u=Y_u[k]=(1_{\{u \in \tree\}},X_u[k])$.
	Noting that $|N_{\myroot}(\tree)|$ is a.s.\ $X_{\V_1}$-measurable, we may use the tower property of conditional expectation to get, on $\{1 \in \tree\}$,
	\begin{align*}
		\E&\Bigg[ \frac{1}{|N_{\myroot}(\tree)|}\sum_{u \in N_{\myroot}(\tree)} h(X_{\myroot}, X_u,  \lan X_{N_{\myroot}(\tree)}  \ran,\lan X_{N_u(\tree)}  \ran) \, \Bigg| \, X_{\myroot} , \, \lan X_{N_{\myroot}(\tree)} \ran\Bigg] \\
		&= \E\Bigg[ \frac{1}{|N_{\myroot}(\tree)|}\sum_{u \in N_{\myroot}(\tree)} \E\big[h(X_{\myroot}, X_u,  \lan X_{N_{\myroot}(\tree)}  \ran,\lan X_{N_u(\tree)}  \ran) \, | \, X_{\V_1} \big] \,  \, \Bigg| \, X_{\myroot} , \, \lan X_{N_{\myroot}(\tree)} \ran\Bigg] \\
		&= \E\Bigg[ \frac{1}{|N_{\myroot}(\tree)|}\sum_{u \in N_{\myroot}(\tree)} \Big\lan \Phi_k(X_u ,X_{\myroot} ), \, h(X_{\myroot}, X_u,  \lan X_{N_{\myroot}(\tree)}  \ran, \, \cdot \,  )\Big\ran  \, \Bigg| \, X_{\myroot} , \, \lan X_{N_{\myroot}(\tree)} \ran\Bigg] \\
		&= \E\big[ \lan \Phi_k(X_1 ,X_{\myroot} ), \, h(X_{\myroot}, X_1,  \lan X_{N_{\myroot}(\tree)}  \ran, \, \cdot \,  )\ran  \, \big| \, X_{\myroot} , \, \lan X_{N_{\myroot}(\tree)} \ran\big],
	\end{align*}
	where the second equality used \eqref{eq:GW-exchangeable-3}, and the last equality used the result of the first part of the proof. Now, apply \eqref{eq:GW-exchangeable-3} once again to write this as 
	\begin{align*}
	\E &\Big[   \E\big[h(X_{\myroot}, X_1,  \lan X_{N_{\myroot}(\tree)}  \ran, \lan X_{N_1(\tree) } \ran ) \, | \, X_{\V_1} \big]  \, \Big| \, X_{\myroot} , \, \lan X_{N_{\myroot}(\tree)} \ran\Big] \\
		&= \E\left[\left.  h(X_{\myroot}, X_1,  \lan X_{N_{\myroot}(\tree)}  \ran, \lan X_{N_1(\tree) } \ran )  \, \right| \, X_{\myroot} , \, \lan X_{N_{\myroot}(\tree)} \ran\right],  \ \ \ a.s., \text{ on } \{1 \in \tree\}.
	\end{align*}
	This completes the proof of the lemma. 
\end{proof}

\begin{proof}[Proof of Proposition \ref{prop:property-UGW}]
	Fix $k \in \N_0$, and let $g \colon (\PolishX^{k+1})^2 \to \Rmb$ be any bounded measurable function.
	Assume without loss of generality that $h$ and $g$ are nonnegative.
	Because $k$ is fixed, throughout this proof we again omit the argument $[k]$ for the sake of readability.
We saw in 	Lemma \ref{lem:unimodular} that $(\tree,Y_\tree)$ is unimodular, and thus so is $(\tree,X_\tree)$. Applying the defining property of unimodularity \eqref{def:unimodularity} to the map $H_k \colon \Gmc_{**}[\PolishX^{k+1}] \rightarrow \R$ given by
	\begin{equation*}
	H_k(G,x,\myroot,u) := g(x_{\myroot},x_u) h(x_u,x_{\myroot},\lan x_{N_u(G)} \ran) 1_{\{u \in N_{\myroot}(G)\}}\frac{1}{|N_{\myroot}(G)|},
	\end{equation*}
we derive
	\begin{align}
		\E&\left[g(X_{\myroot},X_1)h(X_1,X_{\myroot},\lan X_{N_1(\tree)}\ran)1_{\{1 \in \tree\}} \right]  \nonumber \\
		&= \E\left[\frac{1}{|N_{\myroot}(\tree)|}\sum_{v \in N_{\myroot}(\tree)}g(X_{\myroot},X_v)h(X_v,X_{\myroot},\lan X_{N_v(\tree)}\ran) \right] \nonumber \\
		&= \E\left[\sum_{v \in \tree}g(X_{\myroot},X_v)h(X_v,X_{\myroot},\lan X_{N_v(\tree)}\ran)1_{\{v \in N_{\myroot}(\tree)\}}\frac{1}{|N_{\myroot}(\tree)|} \right] \nonumber \\
		&= \E\left[\sum_{v \in \tree}g(X_v,X_{\myroot})h(X_{\myroot},X_v,\lan X_{N_{\myroot}(\tree)}\ran)1_{\{\myroot \in N_{v}(\tree)\}}\frac{1}{|N_{v}(\tree)|} \right] \nonumber \\
		&= \E\left[\frac{1}{|N_{\myroot}(\tree)|}\sum_{v \in N_{\myroot}(\tree)}g(X_v,X_{\myroot})h(X_{\myroot},X_v,\lan X_{N_{\myroot}(\tree)}\ran) \frac{|N_{\myroot}(\tree)|}{|N_v(\tree)|} \right] \nonumber \\
		&= \E\left[ g(X_1,X_{\myroot})h(X_{\myroot},X_1,\lan X_{N_{\myroot}(\tree)}\ran) \frac{|N_{\myroot}(\tree)|}{|N_1(\tree)|} 1_{\{1 \in \tree\}}\right], \label{eq:unimod-key1}
	\end{align}
	where the first equality uses the measurability of $\{1 \in \tree\}$ with respect to $\lan X_{N_{\myroot}(\tree)}\ran$ and Lemma \ref{lem:GW-exchangeable}, the third equality uses unimodularity with the function $H_k$ defined above,
	the fourth equality uses the fact that $\myroot \in N_v(\tree)$ if and only if $v \in N_{\myroot}(\tree)$, and the last equality uses again Lemma \ref{lem:GW-exchangeable} and the measurability of $|N_{\myroot}(\tree)|$ and $|N_v(\tree)|$ with respect to $\lan X_{N_{\myroot}(\tree)}\ran$ and $\lan X_{N_v(\tree)}\ran$, respectively.
	
	If we define $\varphi_h \colon (\PolishX^{k+1})^2 \to \Rmb$ by
	$$\varphi_h(X_{\myroot},X_1) = \Emb \left[ \frac{|N_{\myroot}(\tree)|}{|N_1(\tree)|} h(X_{\myroot},X_1,\lan X_{N_{\myroot}(\tree)}\ran) \Big| X_{\myroot},X_1 \right]1_{\{1 \in \tree\}}$$
	(which makes sense because $\{1 \in \tree\}=\{X_1\neq\cem\}$ is $X_1$-measurable),
	then \eqref{eq:unimod-key1} implies
	\begin{equation}
	\E\left[g(X_{\myroot},X_1)h(X_1,X_{\myroot},\lan X_{N_1(\tree)}\ran)1_{\{1 \in \tree\}} \right] = \E\left[ g(X_1,X_{\myroot})\varphi_h(X_{\myroot},X_1) 1_{\{1 \in \tree\}}\right]. \label{eq:unimod-key1.5}
	\end{equation}
	We will similarly define $\varphi_1 \colon (\PolishX^{k+1})^2 \to \Rmb$ by
	\begin{equation*}
	\varphi_1(X_{\myroot},X_1) = \E\left[\left. \frac{|N_{\myroot}(\tree)|}{|N_1(\tree)|} \, \right| \, X_{\myroot}, X_1\right] 1_{\{1 \in \tree\}}.
	\end{equation*}
	Apply the procedure \eqref{eq:unimod-key1.5}, with $h$ replaced by the constant function $1$ and with $g(x_{\myroot},x_1)$ replaced by $g(x_1,x_{\myroot})\varphi_h(x_{\myroot},x_1)$, to get
	\begin{equation*}
	\E \left[ g(X_1,X_{\myroot})\varphi_h(X_{\myroot},X_1) 1_{\{1 \in \tree\}}\right] = \E\left[ g(X_{\myroot},X_1)\varphi_h(X_1,X_{\myroot})\varphi_1(X_{\myroot},X_1) 1_{\{1 \in \tree\}}\right].
	\end{equation*}
	Combining this with \eqref{eq:unimod-key1.5}, we have
	\begin{equation}	
	\E \left[g(X_{\myroot},X_1)h(X_1,X_{\myroot},\lan X_{N_1(\tree)}\ran)1_{\{1 \in \tree\}} \right] = \E\left[ g(X_{\myroot},X_1)\varphi_h(X_1,X_{\myroot})\varphi_1(X_{\myroot},X_1) 1_{\{1 \in \tree\}}\right]. \label{eq:unimod-key2}
	\end{equation}
	As this holds for any non-negative $g$, we deduce that, a.s.\ on $\{1 \in \tree\}$,
	\begin{equation*}
	\E\left[\left. h(X_1,X_{\myroot},\lan X_{N_1(\tree)}\ran) \, \right| \, X_{\myroot}, X_1\right] = \varphi_h(X_1,X_{\myroot})\varphi_1(X_{\myroot},X_1).
	\end{equation*}
	On the other hand, taking $h \equiv 1$ in \eqref{eq:unimod-key2}, we deduce that $\varphi_1(X_1,X_{\myroot})\varphi_1(X_{\myroot},X_1) = 1$ a.s.\ on $\{1 \in \tree\}$, and so
	\begin{equation*}
	\E\left[\left. h(X_1,X_{\myroot},\lan X_{N_1(\tree)}\ran) \, \right| \, X_{\myroot}, X_1\right] = \frac{\varphi_h(X_1,X_{\myroot})}{\varphi_1(X_1,X_{\myroot})}.
	\end{equation*}
	Now recall that by definition (still omitting $[k]$ from the notation)
	\begin{equation*}
		\Xi_k(X_{\myroot}[k], X_1[k]) = 1_{\{1 \in \tree\}} \frac{\varphi_h(X_{\myroot},X_1)}{\varphi_1(X_{\myroot},X_1)},
	\end{equation*}
	and note that \eqref{eq:property-UGW1} for $v=1$ follows.
	In light of the symmetry provided by Proposition  \ref{prop:properties-GW}(ii), this is enough to complete the proof.
\end{proof}

\section{A Gaussian model on a regular trees}
\label{sec:char_regular_tree}

In this section we present a simple AR(1) model on regular trees whose local equations can be simulated efficiently. 

\subsection{Example of a $\kappa$-regular tree}

Suppose $G$ is the $\kappa$-regular tree for some $\kappa \ge 2$.
With Gaussian initial states, Gaussian noises and affine update rules, the solution $(X_i[k])_{i=0}^{\kappa}$ is Gaussian at each time $k$. 
The means are easy to compute, but the covariance matrix is more difficult. 
For $(X_i[k])_{i=0}^{\kappa}$, the covariance matrix $\Sigma_k$ is $(k+1)(\kappa+1) \times (k+1)(\kappa+1)$ and the conditional law $\cmeas[k]$ appearing in Construction \ref{constr:regulartree_local} is again Gaussian, the parameters of which are obtained by inverting the (sub-) covariance matrix $\Upsilon_k$ of $(X_0[k],X_1[k])$, which is $2(k+1) \times 2(k+1)$. 
This matrix inversion may seem prohibitively expensive, but the key point is that $\Upsilon_k$ can be obtained efficiently from $\Upsilon_{k-1}$ via block matrix inversion.

In the following, for a random (column) vector $\Xbd$, we write $\Var(\Xbd) := \E[\Xbd\Xbd^\top] -\E[\Xbd]\E[\Xbd^\top]$ to denote the covariance matrix. Similarly, for a list of random vectors $X_1,\ldots,X_m$, we interpret $(X_1,\ldots,X_m)^\top$ as the (column) vector obtained by concatenation, and $\Var(X_1,\ldots,X_m)$ is the corresponding covariance matrix. On the other hand, for two vectors $\Xbd$ and $\Ybd$, we write $\Cov(\Xbd,\Ybd) := \E[\Xbd\Ybd^\top] -\E[\Xbd]\E[\Ybd^\top]$ for the matrix of covariances.

Consider the setup in Section \ref{se:regresults}. 
Again we denote by $1,\dotsc,\kappa$ the neighbors of a root vertex $0$, and vertices $ij$ are neighbors of node $i$, for $j=1,\ldots,\kappa-1$ and $i=1,\dotsc,\kappa$.
	Suppose $\{ \xi_v(k) : v \in V, k \in \N_0 \}$ are independent standard Gaussians.
	Consider real-valued $\Xbd(k) := \{ X_v(k) : v \in V, k \in \N\}$ given by the following affine system
	\begin{equation}
		\label{eq:affine_regular_tree}
 X_v(k+1) = aX_v(k) + b \sum_{u \in N_v(G)} X_u(k) + c + \xi_v(k+1),
	\end{equation}
	for some constants $a,b,c \in \Rmb$, initialized from i.i.d.\ standard Gaussians $(X_v(0))_{v \in V}$.
	
	By symmetry, we have $\Lmc(X_v[k])=\Lmc(X_u[k])$ for all $u,v \in V$ and $k \in \N_0$.
	Thus $(X_i[k])_{i=0}^\kappa$ is Gaussian with mean vector $(m_0,\dotsc,m_k) \in \R^{k+1}$ and some covariance matrix which we denote $\Sigma_k \in \R^{(k+1)\times(k+1)}$.
Taking expectations in \eqref{eq:affine_regular_tree} gives
	\begin{equation*}
		m_{k+1} = (a+\kappa b)m_k + c, \quad m_0=0,
	\end{equation*}
	which immediately gives
	\begin{equation*}
m_k =  c\frac{(a+\kappa b)^k - 1}{a+\kappa b -1}, \quad k \in \N_0.
	\end{equation*}

The calculation of $\Sigma_k$ needs some notations.
Noting that by symmetry properties (automorphism invariance) of the system \eqref{eq:affine_regular_tree}, for fixed time instants $k$ and $l$, the covariance $\Cov(X_u(k),X_v(l))$ depends on vertices $u$ and $v$ only through their distance.
Hence, letting
$$\Omega_{k,i} := \Cov(X_u[k],X_v[k]), \quad d(u,v)=i, \quad i \in \N_0,$$
we can write the following block matrix decomposition
$$\Sigma_k = \Var((X_i[k])_{i=0}^\kappa) = (\Sigma_{k}(i,j))_{i,j=0}^\kappa,$$
with
$$\Sigma_k(i,j) = \begin{cases} 
	\Omega_{k,0}, & i=j, \\
	\Omega_{k,1}, & i=0 \mbox{ or } j=0, i \ne j, \\
	\Omega_{k,2}, & \mbox{otherwise}. \end{cases}$$
We claim that one can calculate $\Omega_{k+1,i}, i=0,1,2$ in an inductive manner as follows, using the dynamics \eqref{eq:affine_regular_tree} and the conditional independence property in Theorem \ref{thm:cond_independence}, once we have $\Omega_{k,i}, i=0,1,2$.
The detailed justification is deferred to Appendix \ref{sec:pf-example} and the analysis of computational complexity is provided in Appendix \ref{sec:complexity}.

For $i=0,1,2,3,4$, $k \in \Nmb_0$, and any vertices $u,v$ such that $d(u,v)=i$, let
\begin{align*}
	A_{k+1,i} & := \Cov(X_u(k+1),X_v(k+1)), \\ 
	B_{k+1,i} & := \Cov(X_u[k],X_v(k+1)), \\
	C_{k+1,i} & := \Cov(X_u[k+1],X_v(k+1)) = \begin{bmatrix}
		B_{k+1,i} \\
		A_{k+1,i} 
		\end{bmatrix}
\end{align*}
so that
\begin{equation*}
	\Omega_{k+1,i} = \begin{bmatrix}
	\Cov(X_u[k],X_v[k]) & \Cov(X_u[k],X_v(k+1)) \\
	\Cov(X_u(k+1),X_v[k]) & \Cov(X_u(k+1),X_v(k+1))
	\end{bmatrix} = \begin{bmatrix}
	\Omega_{k,i} & B_{k+1,i} \\
	B_{k+1,i}^\top & A_{k+1,i}
	\end{bmatrix}.
\end{equation*}
Let $\Omega_{0,i} = A_{0,i} = 1_{\{i=0\}}$, $B_{0,i}=0$, and $C_{0,i}=\begin{bmatrix}
    0 \\ 0
\end{bmatrix}$.
Let
\begin{equation*}
	\Upsilon_{k} := \Var((X_0[k],X_1[k])) = \begin{bmatrix}
	\Omega_{k,0} & \Omega_{k,1} \\
	\Omega_{k,1} & \Omega_{k,0}
	\end{bmatrix}.
\end{equation*}
Then one can show that
\begin{align*}
	A_{k+1,0} & = (a^2+\kappa b^2) A_{k,0} + 2ab\kappa A_{k,1} + \kappa(\kappa-1)b^2 A_{k,2} + 1, \\
	B_{k+1,0} & = a C_{k,0} + \kappa b C_{k,1}, \\	
	A_{k+1,1} & = 2ab A_{k,0} + (a^2+\kappa b^2+ab(\kappa-1)) A_{k,1} + 2ab(\kappa-1) A_{k,2} + b^2(\kappa-1)^2 A_{k,3}, \\
	B_{k+1,1} & = a C_{k,1} + b C_{k,0} + b(\kappa-1) C_{k,2}, \\
	A_{k+1,2} & = b^2 A_{k,0} + 2ab A_{k,1} + (a^2+b^2(\kappa-1)+ab(\kappa-1)) A_{k,2} + 2ab(\kappa-1) A_{k,3} + b^2(\kappa-1)^2 A_{k,4}, \\
	B_{k+1,2} & = a C_{k,2} + b C_{k,1} + b(\kappa-1) C_{k,3},	\\
	C_{k,3} & = 		
		\begin{bmatrix} 
		\Omega_{k,1} & \Omega_{k,2} 
		\end{bmatrix} 
		\Upsilon_{k}^{-1}
		\begin{bmatrix} 
		C_{k,2} \\ C_{k,1}
		\end{bmatrix}, \\
	A_{k,4} & = 		
		\begin{bmatrix} 
		C_{k,1}^\top & C_{k,2}^\top 
		\end{bmatrix} 
		\Upsilon_{k}^{-1}
		\begin{bmatrix} 
		\Omega_{k,2} & \Omega_{k,1} \\
		\Omega_{k,1} & \Omega_{k,0}
		\end{bmatrix}
		\Upsilon_{k}^{-1}
		\begin{bmatrix} 
		C_{k,1} \\ C_{k,2} 
		\end{bmatrix},
\end{align*}
and $\Upsilon_{k}^{-1}$ is calculated as follows with a reorder of rows and columns:
Write
\begin{equation*}
	\widetilde{\Upsilon}_{k} = \Var(X_0(0),X_1(0),\dotsc,X_0(k),X_1(k)) 
	= \begin{pmatrix}
	\widetilde{\Upsilon}_{k-1} & \widetilde{B}_{k} \\
	\widetilde{B}_{k}^\top & \widetilde{A}_{k}
	\end{pmatrix},
\end{equation*}
where 
\begin{align*}
\widetilde{A}_{k} &:= \Var((X_0(k),X_1(k))), \\
\widetilde{B}_{k} &:= \Cov((X_0(0),X_1(0),\dotsc,X_0(k-1),X_1(k-1)),(X_0(k),X_1(k)))
\end{align*}
are obtained from $\Omega_{k,0}$ and $\Omega_{k,1}$ after a reorder of rows and columns.
Then
\begin{equation*}
	\widetilde{\Upsilon}_{k}^{-1} 
	= \begin{pmatrix}
			\widetilde{\Upsilon}_{k-1}^{-1}+\widetilde{\Upsilon}_{k-1}^{-1}\widetilde{B}_{k}\widetilde{C}_{k}^{-1}\widetilde{B}_{k}^\top \widetilde{\Upsilon}_{k-1}^{-1} & -\widetilde{\Upsilon}_{k-1}^{-1}\widetilde{B}_{k}\widetilde{C}_{k}^{-1} \\
			-\widetilde{C}_{k}^{-1}\widetilde{B}_{k}^\top \widetilde{\Upsilon}_{k-1}^{-1} & \widetilde{C}_{k}^{-1}
		\end{pmatrix},
\end{equation*}
where $\widetilde{C}_{k}:=\widetilde{A}_{k}-\widetilde{B}_{k}^\top \widetilde{\Upsilon}_{k-1}^{-1}\widetilde{B}_{k}$ is the Schur complement of $\widetilde{\Upsilon}_{k-1}$ in $\widetilde{\Upsilon}_{k}$.

\subsubsection{Justification of the method}
\label{sec:pf-example}

For $A_{k+1,0}$ and $B_{k+1,0}$, from the dynamics \eqref{eq:affine_regular_tree} we have
\begin{align*}
	A_{k+1,0} & = \Cov(X_0(k+1),X_0(k+1)) \\
	& = \Var\left(aX_0(k) + b \sum_{j=1}^\kappa X_j(k) + c + \xi_0(k+1)\right) \\
	& = a^2\Var(X_0(k)) + b^2\left(\kappa \Var(X_0(k)) + \kappa(\kappa-1) \Cov(X_1(k),X_2(k))\right) + 1 \\
	& \qquad + 2ab\kappa\Cov(X_0(k),X_1(k)) \\
	& = (a^2+\kappa b^2) A_{k,0} + 2ab\kappa A_{k,1} + \kappa(\kappa-1)b^2 A_{k,2} + 1,
\end{align*}
and
\begin{align*}
	B_{k+1,0} & = \Cov(X_0[k],X_0(k+1)) \\
	& = \Cov\left(X_0[k], aX_0(k) + b \sum_{j=1}^\kappa X_j(k) + c + \xi_0(k+1)\right) \\	
	& = a\Cov\left(X_0[k], X_0(k)\right) + \kappa b\Cov\left(X_0[k], X_1(k)\right) \\
	& = a C_{k,0} + \kappa b C_{k,1}.	
\end{align*}

For $A_{k+1,1}$ and $B_{k+1,1}$, from the dynamics \eqref{eq:affine_regular_tree} we have
\begin{align*}
	A_{k+1,1} & = \Cov(X_0(k+1),X_1(k+1)) \\
	& = \Cov\left(aX_0(k) + bX_1(k) + b\sum_{j=2}^\kappa X_j(k) + c + \xi_0(k+1), \right. \\
    & \qquad \left. aX_1(k) + bX_0(k) + b \sum_{j=1}^{\kappa-1} X_{1j}(k) + c + \xi_1(k+1)\right) \\
	& = a^2\Cov(X_0(k),X_1(k)) + ab\Var(X_0(k)) + ab(\kappa-1)\Cov(X_0(k),X_{11}(k))  \\
	& \qquad + ba\Var(X_1(k)) + b^2\Cov(X_1(k),X_0(k)) + b^2(\kappa-1)\Cov(X_1(k),X_{11}(k)) \\
	& \qquad + ba(\kappa-1)\Cov(X_2(k),X_1(k)) + ba(\kappa-1)\Cov(X_2(k),X_0(k)) \\
    & \qquad + b^2(\kappa-1)^2\Cov(X_2(k),X_{11}(k)) \\
	& = 2ab A_{k,0} + (a^2+\kappa b^2+ab(\kappa-1)) A_{k,1} + 2ab(\kappa-1) A_{k,2} + b^2(\kappa-1)^2 A_{k,3},
\end{align*}
and
\begin{align*}
	B_{k+1,1} & = \Cov(X_0[k],X_1(k+1)) \\
	& = \Cov\left(X_0[k], aX_1(k) + bX_0(k) + b \sum_{j=1}^{\kappa-1} X_{1j}(k) + c + \xi_1(k+1)\right) \\	
	& = a \Cov\left(X_0[k], X_1(k)\right) + b \Cov\left(X_0[k],X_0(k)\right) + b(\kappa-1)\Cov\left(X_0[k], X_{11}(k)\right) \\
	& = a C_{k,1} + b C_{k,0} + b(\kappa-1) C_{k,2}.	
\end{align*}

For $A_{k+1,2}$ and $B_{k+1,2}$, from the dynamics \eqref{eq:affine_regular_tree} we have
\begin{align*}
	A_{k+1,2} & = \Cov(X_1(k+1),X_2(k+1)) \\
	& = \Cov\left(aX_1(k) + bX_0(k) + b \sum_{j=1}^{\kappa-1} X_{1j}(k) + c + \xi_1(k+1), \right. \\
    & \qquad \left. aX_2(k) + bX_0(k) + b \sum_{j=1}^{\kappa-1} X_{2j}(k) + c + \xi_2(k+1)\right) \\
	& = a^2\Cov(X_1(k),X_2(k)) + ab\Cov(X_1(k),X_0(k)) + ab(\kappa-1)\Cov(X_1(k),X_{21}(k))  \\
	& \qquad + ba\Cov(X_0(k),X_2(k)) + b^2\Var(X_0(k)) + b^2(\kappa-1)\Cov(X_0(k),X_{21}(k)) \\
	& \qquad + ba(\kappa-1)\Cov(X_{11}(k),X_2(k)) + ba(\kappa-1)\Cov(X_{11}(k),X_0(k)) \\
    & \qquad + b^2(\kappa-1)^2\Cov(X_{11}(k),X_{21}(k)) \\
	& = b^2 A_{k,0} + 2ab A_{k,1} + (a^2+b^2(\kappa-1)+ab(\kappa-1)) A_{k,2} + 2ab(\kappa-1) A_{k,3} + b^2(\kappa-1)^2 A_{k,4},
\end{align*}
and
\begin{align*}
	B_{k+1,2} & = \Cov(X_1[k],X_2(k+1)) \\
	& = \Cov\left(X_1[k], aX_2(k) + bX_0(k) + b \sum_{j=1}^{\kappa-1} X_{2j}(k) + c + \xi_2(k+1)\right) \\	
	& = a \Cov\left(X_1[k], X_2(k)\right) + b \Cov\left(X_1[k], X_0(k)\right) + b(\kappa-1)\Cov\left(X_1[k], X_{21}(k)\right) \\
	& = a C_{k,2} + b C_{k,1} + b(\kappa-1) C_{k,3}.	
\end{align*}

For $A_{k,3}$, $B_{k,3}$ and $A_{k,4}$ appearing in the above expressions, first note that the conditional independence proven in Theorem \ref{thm:cond_independence}, by taking $A=\Vmb_{2+}$ therein, ensures that $X_{21}(k)$ and $X_1[k]$ are conditionally independent given $(X_0[k],X_2[k])$.
Using this and the law of total covariance, we have
\begin{align*}
	C_{k,3} & = \Cov(X_1[k],X_{21}(k)) \\
	& = \Cov\Big( \Emb[X_1[k] \mid (X_0[k],X_2[k],X_{21}(k))], \Emb[X_{21}(k) \mid (X_0[k],X_2[k],X_{21}(k))] \Big) \\
	& = \Cov\Big( \Emb[X_1[k] \mid (X_0[k],X_2[k])], X_{21}(k) \Big).
\end{align*}
Since
$$\Var(X_1[k],X_0[k],X_2[k])=\begin{bmatrix}
\Omega_{k,0} & \Omega_{k,1} & \Omega_{k,2} \\
\Omega_{k,1} & \Omega_{k,0} & \Omega_{k,1} \\
\Omega_{k,2} & \Omega_{k,1} & \Omega_{k,0}
\end{bmatrix},$$
by a standard property of multivariate Gaussian distributions, the conditional law
\begin{equation*}
	\Lmc(X_1[k] \,|\, X_0[k],X_2[k]) 
\end{equation*}
is Gaussian with mean
\begin{equation*}
	\Emb[X_1[k]] + 
	\begin{bmatrix} 
	\Omega_{k,1} & \Omega_{k,2} 
	\end{bmatrix} 
	\begin{bmatrix}
	\Omega_{k,0} & \Omega_{k,1} \\
	\Omega_{k,1} & \Omega_{k,0}
	\end{bmatrix}^{-1} 
	\left(
	\begin{bmatrix} 
	X_0[k] \\ X_2[k] 
	\end{bmatrix} 
	-
	\begin{bmatrix} 
	\Emb[X_0[k]] \\ \Emb[X_2[k]] 
	\end{bmatrix}\right)
\end{equation*}
and variance
\begin{equation*}
	\Omega_{k,0} - 	
	\begin{bmatrix} 
	\Omega_{k,1} & \Omega_{k,2} 
	\end{bmatrix} 
	\begin{bmatrix}
	\Omega_{k,0} & \Omega_{k,1} \\
	\Omega_{k,1} & \Omega_{k,0}
	\end{bmatrix}^{-1} 
	\begin{bmatrix} 
	\Omega_{k,1} \\ \Omega_{k,2} 
	\end{bmatrix}.
\end{equation*}	
Therefore,
\begin{align*}
	C_{k,3} 
	& = \Cov\left( 	
		\begin{bmatrix} 
		\Omega_{k,1} & \Omega_{k,2} 
		\end{bmatrix} 
		\begin{bmatrix}
		\Omega_{k,0} & \Omega_{k,1} \\
		\Omega_{k,1} & \Omega_{k,0}
		\end{bmatrix}^{-1}
		\begin{bmatrix} 
		X_0[k] \\ X_2[k] 
		\end{bmatrix},
		X_{21}(k)		
		\right) \\
	& = 		
		\begin{bmatrix} 
		\Omega_{k,1} & \Omega_{k,2} 
		\end{bmatrix} 
		\begin{bmatrix}
		\Omega_{k,0} & \Omega_{k,1} \\
		\Omega_{k,1} & \Omega_{k,0}
		\end{bmatrix}^{-1}
		\begin{bmatrix} 
		C_{k,2} \\ C_{k,1}
		\end{bmatrix}.
\end{align*}

Similarly, using the conditional independence proven in Theorem \ref{thm:cond_independence} and the law of total covariance, we have
\begin{align*}
	A_{k,4} & = \Cov(X_{11}(k),X_{21}(k)) \\
	& = \Cov\Big( \Emb[X_{11}(k) \mid (X_0[k],X_1[k],X_2[k])], \Emb[X_{21}(k) \mid (X_0[k],X_1[k],X_2[k])] \Big) \\
	& = \Cov\Big( \Emb[X_{11}(k) \mid (X_0[k],X_1[k])], \Emb[X_{21}(k) \mid (X_0[k],X_2[k])] \Big).
\end{align*}
Since
$$\Var(X_{11}(k),X_1[k],X_0[k])=\Var(X_{21}(k),X_2[k],X_0[k])=\begin{bmatrix}
A_{k,0} & C_{k,1}^\top & C_{k,2}^\top \\
C_{k,1} & \Omega_{k,0} & \Omega_{k,1} \\
C_{k,2} & \Omega_{k,1} & \Omega_{k,0}
\end{bmatrix},$$
we have
\begin{align*}
	A_{k,4} 
	& = \Cov\left( 	
		\begin{bmatrix} 
		C_{k,1}^\top & C_{k,2}^\top 
		\end{bmatrix} 
		\begin{bmatrix}
		\Omega_{k,0} & \Omega_{k,1} \\
		\Omega_{k,1} & \Omega_{k,0}
		\end{bmatrix}^{-1}
		\begin{bmatrix} 
		X_1[k] \\ X_0[k] 
		\end{bmatrix},
		\begin{bmatrix} 
		C_{k,1}^\top & C_{k,2}^\top 
		\end{bmatrix} 
		\begin{bmatrix}
		\Omega_{k,0} & \Omega_{k,1} \\
		\Omega_{k,1} & \Omega_{k,0}
		\end{bmatrix}^{-1}
		\begin{bmatrix} 
		X_2[k] \\ X_0[k] 
		\end{bmatrix}		
		\right) \\
	& = 		
		\begin{bmatrix} 
		C_{k,1}^\top & C_{k,2}^\top 
		\end{bmatrix} 
		\begin{bmatrix}
		\Omega_{k,0} & \Omega_{k,1} \\
		\Omega_{k,1} & \Omega_{k,0}
		\end{bmatrix}^{-1}
		\begin{bmatrix} 
		\Omega_{k,2} & \Omega_{k,1} \\
		\Omega_{k,1} & \Omega_{k,0}
		\end{bmatrix}
		\begin{bmatrix}
		\Omega_{k,0} & \Omega_{k,1} \\
		\Omega_{k,1} & \Omega_{k,0}
		\end{bmatrix}^{-1}
		\begin{bmatrix} 
		C_{k,1} \\ C_{k,2} 
		\end{bmatrix}.
\end{align*}

Lastly, the inductive formula for $\widetilde{\Upsilon}_{k}^{-1}$ follows directly from the block matrix inversion.

\subsubsection{Analysis of computational complexity and comparison with naive method}
\label{sec:complexity}

In the inductive formula from $k$ to $k+1$, the computational complexity of calculations of $A_{k+1,i}$ and $B_{k+1,i}$, $i=0,1,2$, is $O(k^3)$, provided that $\Upsilon_k^{-1}$ is known.
The computational complexity of calculations of $\Upsilon_k^{-1}$, including computing $\widetilde{\Upsilon}_k^{-1}$ and reordering rows and columns, is also $O(k^3)$.
Here computing the multiplication of matrices of size $O(k)$ is the most time-consuming. 
As a result, the computational complexity of the inductive method from time $0$ to time $k$ is $O(k^4)$.

In contrast, a naive approach would forsake symmetry and conditional independence, instead tracking the entire tree of dependence. 
That is, to simulate a single particle up to time $k$, one would simulate each of its $\kappa$ neighbors up to time $k-1$, each of its neighbors' $\kappa-1$ remaining neighbors up to time $k-2$, and so on, resulting in the enormous total of $1+\kappa+\kappa(\kappa-1)+\dotsb+\kappa(\kappa-1)^k = O((\kappa-1)^{k+1})$ particles.
Therefore the computational complexity grows exponentially in $k$.

\begin{Remark}
	\label{rmk:naive}
	It must be noted that in this simple setting there is an improved naive method that does not use any conditional independence but rather uses just the symmetry of the tree and performs quite well. 
    Define $C_n(k) := \Cov(X_u(k),X_v(k))$ where $u$ and $v$ are vertices of distance $n$. There is a simple recurrence for $C_n(k+1)$ in terms of $\{C_\ell (k) : \ell \in \N_0, |n-\ell| \le 2\}$, and so $C_0(k)$ can be computed in $O(k^2)$ steps.
	However, this approach will fail if there is no global symmetry in the system, as illustrated in the next example. 
\end{Remark}

\subsection{Example of a $\kappa$-regular-ish tree}
In this section we consider an example of a $\kappa$-regular-ish tree where one does not have the global symmetry.
As a result, the above naive method has exponentially growing computational complexity and the improved naive method in Remark \ref{rmk:naive} would not work.
In contrast, we will show that our method still has a  computational complexity that grows only polynomially in the size of the system. 

Fix $\kappa \ge 3$ and $\tilde\kappa \ge 1$.
Recall that in a $\kappa$-regular tree, the root vertex $0$ has $\kappa$ children and every non-root vertex has $\kappa-1$ children.
Suppose the tree $G$ is almost $\kappa$-regular in that for every vertex, one of the children (e.g.\ the first) has $\tilde\kappa \ne \kappa-1$ children.
In the Ulam-Harris-Neveu labeling, this means that for any non-root vertex $v=(v_1,\ldots,v_k) \in \N^k$,
\begin{align*}
	v \in G & \iff v_1 \in \{1,\dotsc,\kappa\} \mbox{ and for each } j=2,\dotsc,k, \\
	& \qquad v_j \in \{1,\dotsc,\kappa-1\} \mbox{ whenever } v_{j-1} \ne 1, v_j \in \{1,\dotsc,\tilde\kappa\} \mbox{ whenever } v_{j-1} = 1.
\end{align*}
We note that, different from the previous example, the graph $G$ does not have  global symmetry.
However, it has the following self-similarity:
For $v \in G$, letting $C(v)$ denote the subtree rooted at $v$, namely $C(v):=\{u \in G : v \le u\}$, we have that 
\begin{equation}
	\label{eq:self-similarity}
	\mbox{$C(v1)$ and $C(1)$ are isomorphic and that $C(v2)$ and $C(2)$ are isomorphic,}
\end{equation} 
for each $v \in G$.

Consider real-valued $\Xbd(k) := \{ X_v(k) : v \in V, k \in \N\}$ given by the affine system \eqref{eq:affine_regular_tree}, namely
\begin{equation*}
X_v(k+1) = aX_v(k) + b \sum_{u \in N_v(G)} X_u(k) + c + \xi_v(k+1),
\end{equation*}
for some constants $a,b,c \in \Rmb$, where $\{ \xi_v(k+1) : v \in V, k \in \N_0 \}$ and $(X_v(0))_{v \in V}$ are independent standard Gaussians.
Our goal is to simulate the Gaussian random variable $X_0(k)$.
Assume for simplicity that $c=0$.
Then $\Emb[X_v[k]]=\zero$, where $\zero$ is a vector of $0$'s of proper dimension.
Due to the lack of symmetry, the naive method of simulating $X_0(k)$ or calculating $\Var(X_0(k))$ in the previous section has computational complexity $O((\max\{\kappa-1,\tilde\kappa\})^{k+1})$ that is exponential in $k$.
Next we will briefly show that our method still has computational complexity $O(k^4)$ that is polynomial in $k$.

For each $k \in \Nmb_0$, consider the trajectory of representatives from the first two generations of the root 
$$(X_0[k],X_1[k],X_{11}[k],X_{12}[k],X_2[k],X_{21}[k],X_{22}[k]),$$
which is Gaussian with mean vector $\zero$ and some covariance matrix denoted by $\Sigma_k$.
Suppose we have calculated $\Sigma_k$ and would like to calculate $\Sigma_{k+1}$.
We need to calculate $\Cov(X_u(k+1),X_v[k+1])$ for $u,v$ in the first two generations.
By the linear evolution of $X_u$ and $X_v$, this reduces to the calculation of $\Cov(X_u[k],X_v[k])$ for $u,v$ in the first three generations, which can be done with the following two key ingredients.
One is the conditional independence property in Theorem \ref{thm:cond_independence} as used in the previous example.
The other is a combination of the conditional consistency property in Theorem \ref{thm:cond_consistency} and the self-similarity property in \eqref{eq:self-similarity}.

To illustrate the idea, we take the calculation of $\Cov(X_{111}(k),X_{222}(k))$ for example and the others can be treated in a similar manner. 
From \eqref{eq:self-similarity} and Theorem \ref{thm:cond_consistency} we have
\begin{align*}
	\begin{aligned}
	\Lmc\left(X_{C(i1)}[k] \mid (X_{i1}[k], X_{i}[k]) = (x_{1}[k], x_{0}[k]) \right) & = \Lmc\left(X_{C(1)}[k] \mid (X_{1}[k], X_{0}[k]) = (x_{1}[k], x_{0}[k]) \right),  \\
	\Lmc\left(X_{C(i2)}[k] \mid (X_{i2}[k], X_{i}[k]) = (x_{2}[k], x_{0}[k]) \right) & = \Lmc\left(X_{C(2)}[k] \mid (X_{2}[k], X_{0}[k]) = (x_{2}[k], x_{0}[k]) \right),
	\end{aligned}
\end{align*}
for each $k \in \N_0$ and $\Lmc(X_2[k],X_1[k],X_0[k])$-almost every $(x_2[k],x_1[k],x_0[k])\in (\R^{k+1})^{3}$.
In particular,
\begin{align}
	\label{eq:eg-2}
	\begin{aligned}
	\Lmc(X_{111}(k) \mid (X_{11}[k], X_{1}[k]) = (x_{1}[k], x_{0}[k]) ) & = \Lmc\left(X_{11}[k] \mid (X_{1}[k], X_{0}[k]) = (x_{1}[k], x_{0}[k]) \right), \\
	\Lmc(X_{222}(k) \mid (X_{22}[k], X_{2}[k]) = (x_{2}[k], x_{0}[k]) ) & = \Lmc\left(X_{22}[k] \mid (X_{2}[k], X_{0}[k]) = (x_{2}[k], x_{0}[k]) \right),
	\end{aligned}	
\end{align}
for $\Lmc(X_{2,1,0}[k])$-almost every $x_{2,1,0}[k]\in (\R^{k+1})^{2}$.
Write
\begin{equation*}
	\Var(X_{11}[k], X_1[k],X_0[k]) = 	
	\begin{bmatrix}
	A_k & B_k^T \\
	B_k & C_k
	\end{bmatrix}, \quad 
	\Var(X_{22}[k], X_2[k],X_0[k]) = 	
	\begin{bmatrix}
	\Atil_k & \Btil_k^T \\
	\Btil_k & \Ctil_k
	\end{bmatrix},
\end{equation*}
where
\begin{align*}
	A_k & := \Var(X_{11}[k]), \quad B_k := \Cov((X_1[k],X_0[k]),X_{11}[k]), \quad C_k := \Var((X_{11}[k], X_1[k],X_0[k])), \\
	\Atil_k & := \Var(X_{22}[k]), \quad \Btil_k := \Cov((X_2[k],X_0[k]),X_{22}[k]), \quad \Ctil_k := \Var((X_{22}[k], X_2[k],X_0[k])),
\end{align*}
which are all available in $\Sigma_k$.
Using this and a standard property of multivariate Gaussian distributions, we have that the conditional laws in \eqref{eq:eg-2} are Gaussian with mean
\begin{equation*}
	\zero + B_k^T C_k^{-1} 
	\left(
	\begin{bmatrix} 
	x_1[k] \\ x_0[k] 
	\end{bmatrix} 
	-
	\zero\right), \quad
	\zero + \Btil_k^T \Ctil_k^{-1} 
	\left(
	\begin{bmatrix} 
	x_2[k] \\ x_0[k] 
	\end{bmatrix} 
	-
	\zero\right)
\end{equation*}
and variance
\begin{equation*}
	A_k - B_k^T C_k^{-1} B_k, \quad \Atil_k - \Btil_k^T \Ctil_k^{-1} \Btil_k.
\end{equation*}	
By the law of total covariance and the conditional independence property proven in Theorem \ref{thm:cond_independence}, we have
\begin{align*}
	& \Cov(X_{111}(k),X_{222}(k)) \\
	& = \Cov\Big( \Emb[X_{111}(k) \mid (X_{11}[k],X_1[k],X_{22}[k],X_2[k])], \Emb[X_{222}(k) \mid (X_{11}[k],X_1[k],X_{22}[k],X_2[k])] \Big) \\
	& = \Cov\Big( \Emb[X_{111}(k) \mid (X_{11}[k],X_1[k])], \Emb[X_{222}(k) \mid (X_{22}[k],X_2[k])] \Big).
\end{align*}
Plugging in the above expressions of conditional distributions in \eqref{eq:eg-2}, we have
\begin{align*}
	\Cov(X_{111}(k),X_{222}(k))
	& = \Cov\left( 	
		B_k^T C_k^{-1} 
		\begin{bmatrix} 
		X_{11}[k] \\ X_1[k] 
		\end{bmatrix}		
		,
		\Btil_k^T \Ctil_k^{-1}
		\begin{bmatrix} 
		X_{22}[k] \\ X_2[k] 
		\end{bmatrix}
		\right) \\
	& = B_k^T C_k^{-1} \Cov((X_{11}[k], X_1[k]),(X_{22}[k], X_2[k])) \Ctil_k^{-1} \Btil_k.
\end{align*}
Note that here every matrix is available in $\Sigma_k$.
The inverse of $C_k$ and $\Ctil_k$ could be calculated in a similar manner as in the previous example. 
As these matrices have dimensions $O(k) \times O(k)$, the total computational complexity from $k$ to $k+1$ is still $O(k^3)$.

To sum up, compared with the previous example, due to the lack of  global symmetry, we will consider vertices from the first two generations and the notation is more involved.
But using the conditional independence and conditional consistency properties, we could still design an inductive algorithm that has a polynomial computational complexity of $O(k^4)$.

\bibliographystyle{plain}

\begin{bibdiv}
\begin{biblist}

\bib{aldous-lyons}{article}{
      author={Aldous, D.},
      author={Lyons, R.},
       title={Processes on unimodular random networks},
        date={2007},
     journal={Electronic Journal of Probability},
      volume={12},
       pages={1454\ndash 1508},
        note={paper no. 54},
}

\bib{BLMWY2024probabilistic}{article}{
      author={Bayraktar, Erhan},
      author={Lu, Fei},
      author={Maggioni, Mauro},
      author={Wu, Ruoyu},
      author={Yang, Sichen},
       title={Probabilistic cellular automata with local transition matrices:
  synchronization, ergodicity, and inference},
        date={2024},
     journal={arXiv preprint arXiv:2405.02928},
}

\bib{bigelis1999critical}{article}{
      author={Bigelis, S.},
      author={Cirillo, E.N.M.},
      author={Lebowitz, J.L.},
      author={Speer, E.R.},
       title={Critical droplets in metastable states of probabilistic cellular
  automata},
        date={1999},
     journal={Physical Review E},
      volume={59},
      number={4},
       pages={3935},
}

\bib{boccara1993critical}{article}{
      author={Boccara, N.},
      author={Cheong, K.},
       title={Critical behaviour of a probabilistic automata network sis model
  for the spread of an infectious disease in a population of moving
  individuals},
        date={1993},
     journal={Journal of Physics A: Mathematical and General},
      volume={26},
      number={15},
       pages={3707},
}

\bib{Cocomello-thesis}{thesis}{
      author={Cocomello, Juniper},
       title={Heterogeneous stochastic network dynamics in living and
  artificial systems},
        type={Ph.D. Thesis},
        date={forthcoming},
}

\bib{CocomelloRamanan2023exact}{article}{
      author={Cocomello, Juniper},
      author={Ramanan, Kavita},
       title={Exact description of limiting {SIR} and {SEIR} dynamics on
  locally tree-like graphs},
        date={2023},
     journal={arXiv preprint arXiv:2309.08829},
}

\bib{cont2010social}{article}{
      author={Cont, R.},
      author={L{\"o}we, M.},
       title={Social distance, heterogeneity and social interactions},
        date={2010},
     journal={Journal of Mathematical Economics},
      volume={46},
      number={4},
       pages={572\ndash 590},
}

\bib{dai2002stationary}{article}{
      author={{Dai Pra}, P.},
      author={Louis, P.-Y.},
      author={R{\oe}lly, S.},
       title={Stationary measures and phase transition for a class of
  probabilistic cellular automata},
        date={2002},
     journal={ESAIM: Probability and Statistics},
      volume={6},
       pages={89\ndash 104},
}

\bib{dai2012sampling}{article}{
      author={{Dai Pra}, P.},
      author={Scoppola, B.},
      author={Scoppola, E.},
       title={Sampling from a gibbs measure with pair interaction by means of
  {PCA}},
        date={2012},
     journal={Journal of Statistical Physics},
      volume={149},
      number={4},
       pages={722\ndash 737},
}

\bib{dawson1973information}{article}{
      author={Dawson, D.A.},
       title={Information flow in discrete {M}arkov systems},
        date={1973},
     journal={Journal of Applied Probability},
      volume={10},
      number={1},
       pages={63\ndash 83},
}

\bib{dawson1975synchronous}{article}{
      author={Dawson, D.A.},
       title={Synchronous and asynchronous reversible {M}arkov systems},
        date={1975},
     journal={Canadian Mathematical Bulletin},
      volume={17},
      number={5},
       pages={633\ndash 649},
}

\bib{del2004feynman}{incollection}{
      author={{Del Moral}, P.},
       title={Feynman-{K}ac formulae: {G}enealogical and interacting particle
  systems with applications},
        date={2004},
   booktitle={Feynman-{K}ac {F}ormulae},
   publisher={Springer},
       pages={47\ndash 93},
}

\bib{DurrettLevin}{article}{
      author={Durrett, R.},
      author={Levin, S.A.},
       title={Stochastic spatial models: a user's guide to ecological
  applications},
        date={1994},
     journal={Philosophical Transactions of the Royal Society of London. Series
  B: Biological Sciences},
      volume={343},
      number={1305},
       pages={329\ndash 350},
}

\bib{follmer1994stock}{article}{
      author={F{\"o}llmer, H.},
       title={Stock price fluctuation as a diffusion in a random environment},
        date={1994},
     journal={Philosophical Transactions of the Royal Society of London. Series
  A: Physical and Engineering Sciences},
      volume={347},
      number={1684},
       pages={471\ndash 483},
}

\bib{follmer2001convergence}{article}{
      author={F{\"o}llmer, H.},
      author={Horst, U.},
       title={Convergence of locally and globally interacting {M}arkov chains},
        date={2001},
     journal={Stochastic Processes and Their Applications},
      volume={96},
      number={1},
       pages={99\ndash 121},
}

\bib{FraimanLinOlvera2024opinion}{article}{
      author={Fraiman, Nicolas},
      author={Lin, Tzu-Chi},
      author={Olvera-Cravioto, Mariana},
       title={Opinion dynamics on directed complex networks},
        date={2024},
     journal={Mathematics of Operations Research},
}

\bib{GangulyRamanan2022interacting}{article}{
      author={Ganguly, Ankan},
      author={Ramanan, Kavita},
       title={Interacting jump processes preserve semi-global {M}arkov random
  fields on path space},
        date={2022},
     journal={arXiv preprint arXiv:2210.09253},
}

\bib{GangulyRamanan2022hydrodynamic}{article}{
      author={Ganguly, Ankan},
      author={Ramanan, Kavita},
       title={Hydrodynamic limits of non-{M}arkovian interacting particle
  systems on sparse graphs},
        date={2024},
     journal={Electronic Journal of Probability},
      volume={29},
       pages={1\ndash 63},
}

\bib{goldstein1989pca}{article}{
      author={Goldstein, S.},
      author={Kuik, R.},
      author={Lebowitz, J.L.},
      author={Maes, C.},
       title={From {PCA}'s to equilibrium systems and back},
        date={1989},
     journal={Communications in Mathematical Physics},
      volume={125},
      number={1},
       pages={71\ndash 79},
}

\bib{grassberger1983critical}{article}{
      author={Grassberger, P.},
       title={On the critical behavior of the general epidemic process and
  dynamical percolation},
        date={1983},
     journal={Mathematical Biosciences},
      volume={63},
      number={2},
       pages={157\ndash 172},
}

\bib{grinstein1985statistical}{article}{
      author={Grinstein, G.},
      author={Jayaprakash, C.},
      author={He, Y.},
       title={Statistical mechanics of probabilistic cellular automata},
        date={1985},
     journal={Physical review letters},
      volume={55},
      number={23},
       pages={2527},
}

\bib{harris-book}{book}{
      author={Harris, T.E.},
       title={The theory of branching processes},
   publisher={Courier Corporation},
        date={2002},
}

\bib{horst2005financial}{article}{
      author={Horst, U.},
       title={Financial price fluctuations in a stock market model with many
  interacting agents},
        date={2005},
     journal={Economic Theory},
      volume={25},
      number={4},
       pages={917\ndash 932},
}

\bib{HuRam24}{article}{
      author={Hu, Kevin},
      author={Ramanan, Kavita},
       title={An {H}-theorem for a conditional {M}ckean-{V}lasov process
  related to interacting diffusions on regular trees},
        date={2024},
     journal={arXiv preprint arXiv:2412.07710},
         url={https://doi.org/10.48550/arXiv.2412.07710},
}

\bib{HuRam25}{article}{
      author={Hu, Kevin},
      author={Ramanan, Kavita},
       title={A case study on the long-time behavior of the {G}aussian
  local-field equation},
        date={2025},
     journal={arXiv preprint arXiv:2504.06449},
         url={https://doi.org/10.48550/arXiv.2504.06449},
}

\bib{hurd2016contagion}{book}{
      author={Hurd, T.R.},
       title={Contagion! {S}ystemic risk in financial networks},
      series={SpringerBriefs in Quantitative Finance},
   publisher={Springer},
        date={2016},
}

\bib{KanoriaMontanari2011majority}{article}{
      author={Kanoria, Y.},
      author={Montanari, A.},
       title={Majority dynamics on trees and the dynamic cavity method},
        date={2011},
     journal={The Annals of Applied Probability},
      volume={21},
      number={5},
       pages={1694\ndash 1748},
}

\bib{LacRamWu-MRF}{article}{
      author={Lacker, Daniel},
      author={Ramanan, Kavita},
      author={Wu, Ruoyu},
       title={Locally interacting diffusions as {M}arkov random fields on path
  space},
        date={2021},
     journal={Stochastic Processes and their Applications},
      volume={140},
       pages={81\ndash 114},
}

\bib{LacRamWu-convergence}{article}{
      author={Lacker, Daniel},
      author={Ramanan, Kavita},
      author={Wu, Ruoyu},
       title={Local weak convergence for sparse networks of interacting
  processes},
        date={2023},
     journal={The Annals of Applied Probability},
      volume={33},
      number={2},
       pages={843\ndash 888},
}

\bib{LacRamWu-diffusion}{article}{
      author={Lacker, Daniel},
      author={Ramanan, Kavita},
      author={Wu, Ruoyu},
       title={Marginal dynamics of interacting diffusions on unimodular
  {G}alton--{W}atson trees},
        date={2023},
     journal={Probability Theory and Related Fields},
      volume={187},
      number={3},
       pages={817\ndash 884},
}

\bib{LacZha23}{article}{
      author={Lacker, Daniel},
      author={Zhang, Jiacheng},
       title={Stationary solutions and local equations for interacting
  diffusions on regular trees},
        date={2023},
     journal={Electronic Journal of Probability},
      volume={28},
       pages={1\ndash 37},
}

\bib{lebowitz1990statistical}{article}{
      author={Lebowitz, J.L.},
      author={Maes, C.},
      author={Speer, E.R.},
       title={Statistical mechanics of probabilistic cellular automata},
        date={1990},
     journal={Journal of Statistical Physics},
      volume={59},
      number={1-2},
       pages={117\ndash 170},
}

\bib{Liggett}{book}{
      author={Liggett, T.M.},
       title={Interacting particle systems},
   publisher={Springer Science \& Business Media},
        date={2012},
      volume={276},
}

\bib{louis2004ergodicity}{article}{
      author={Louis, P.-Y.},
       title={Ergodicity of {PCA}: equivalence between spatial and temporal
  mixing conditions},
        date={2004},
     journal={Electronic Communications in Probability},
      volume={9},
       pages={119\ndash 131},
}

\bib{LouisNardi}{book}{
      author={Louis, P.-Y.},
      author={Nardi, F.R.},
       title={Probabilistic cellular automata},
   publisher={Springer},
        date={2018},
}

\bib{neveu1986arbres}{article}{
      author={Neveu, J.},
       title={Arbres et processus de {G}alton-{W}atson},
        date={1986},
     journal={Ann. Inst. H. Poincar{\'e} Probab. Statist},
      volume={22},
      number={2},
       pages={199\ndash 207},
}

\bib{OliReiSto19}{article}{
      author={Oliveira, Roberto~I},
      author={Reis, Guilherme~H},
      author={Stolerman, Lucas~M},
       title={Interacting diffusions on sparse graphs: hydrodynamics from local
  weak limits},
        date={2020},
     journal={Electronic Journal of Probability},
      volume={25},
       pages={1\ndash 35},
}

\bib{Ram22}{article}{
      author={Ramanan, Kavita},
       title={Beyond mean-field limits for the analysis of large-scale
  networks},
        date={2022},
     journal={Queueing Systems},
      volume={100},
       pages={345\ndash 347},
         url={https://doi.org/10.1007/s11134-022-09845-9},
        note={Special Issue on ``100 Views on Queues"},
}

\bib{Ramanan2023interacting}{inproceedings}{
      author={Ramanan, Kavita},
       title={Interacting stochastic processes on sparse random graphs},
        date={2022},
   booktitle={Proceedings of the 2022 {I}nternational {C}ongress of
  {M}athematicians ({ICM})},
      volume={6},
   publisher={{EMS} Press},
       pages={4394\ndash 4425},
         url={https://doi.org/10.48550/arXiv.2401.00082},
}

\bib{RamYas23}{article}{
      author={Ramanan, Kavita},
      author={Yasodharan, Sarath},
       title={On the large deviation rate function for marked sparse random
  graphs},
        date={2023},
     journal={arXiv preprint arXiv:2312.16103v2},
         url={https://doi.org/10.48550/arXiv.2312.16103v2},
}

\bib{Wortsman18}{thesis}{
      author={Wortsman, Michell},
       title={Systems of interacting particles and efficient approximations for
  large sparse graphs},
        type={Senior Honors Thesis, Advisor: K. Ramanan},
        date={2018},
}

\end{biblist}
\end{bibdiv}

\end{document}